\renewcommand\section{\@startsection{section}{3}{\z@}%
                                     {-3.25ex\@plus -1ex \@minus -.2ex}%
                                     {1.5ex \@plus .2ex}
                                     {\normalfont\LARGE\bfseries}}
\renewcommand\subsection{\@startsection{subsection}{3}{\z@}%
                                     {-3.25ex\@plus -1ex \@minus -.2ex}%
                                     {-1.5ex \@plus -.2ex}
                                     {\normalfont\normalsize\bfseries}}
\@date \else {\vskip3ex \centering\footnotesize\@date\par\vskip1ex}\fi
\else \@footnotetext{\@setdate}\fi}
\newtheorem{theorem}{Theorem}[subsection]
\newtheorem{corollary}[theorem]{Corollary}
\newtheorem{lemma}[theorem]{Lemma}
\newtheorem{proposition}[theorem]{Proposition}
\newtheorem{remark}[theorem]{Remark}
\theoremstyle{definition}
\newtheorem{definition}{Definition}[section]
\newcommand{\sub}{\subseteq}
\newcommand{\sm}{\setminus}
\newcommand{\CC}{\mathbb{C}}
\newcommand{\OO}{\mathbb{O}}
\newcommand{\ZZ}{\mathbb{Z}}
\newcommand{\PP}{\mathbb{P}}
\newcommand{\HH}{\mathbb{H}}
\newcommand{\kk}{\mathcal{K}}
\newcommand{\xx}{\mathcal{X}}
\newcommand{\pp}{\mathcal{P}}
\newcommand{\bb}{\mathcal{B}}
\newcommand{\dd}{\mathcal{D}}
\newcommand{\cc}{\mathcal{C}}
\newcommand{\mm}{\mathcal{M}}
\newcommand{\uu}{\mathcal{U}}
\newcommand{\oo}{\mathcal{O}}
\newcommand{\aaa}{\mathcal{A}}
\newcommand{\Jj}{\mathfrak{J}}
\newcommand{\Gg}{\mathfrak{g}}
\newcommand{\gl}{\mathfrak{gl}}
\newcommand{\sll}{\mathfrak{sl}}
\newcommand{\spp}{\mathfrak{sp}}
\newcommand{\Hh}{\mathfrak{h}}
\newcommand{\ol}{\overline}
\newcommand{\ra}{\rightarrow}
\newcommand{\lra}{\longrightarrow}
\newcommand{\hra}{\hookrightarrow{}}
\newcommand\blfootnote[1]{%
  \begingroup
  \renewcommand\thefootnote{}\footnote{#1}%
  \addtocounter{footnote}{-1}%
  \endgroup
}
\DeclareMathOperator{\ad}{ad}
\DeclareMathOperator{\im}{Im}
\DeclareMathOperator{\Lie}{Lie}
\DeclareMathOperator{\Ch}{Ch}
\DeclareMathOperator{\Tr}{Tr}
\DeclareMathOperator{\End}{End}
\DeclareMathOperator{\Der}{Der}
\DeclareMathOperator{\Sym}{Sym}
\DeclareMathOperator{\Hom}{Hom}
\DeclareMathOperator{\Spec}{Spec}
\DeclareMathOperator{\Id}{Id}
\newcommand{\Ann}{\mathrm{Ann}}
\newcommand{\gr}{\mathrm{gr}}
\newcommand{\nn}{\mathcal{N}}
\title{Almost commuting scheme of symplectic matrices and quantum Hamiltonian reduction}
\author{Pallav Goyal}
\begin{document}

\subjclass[2023]{16G60, 14L35}

\keywords{Commuting variety, Cherednik algebras, Hamiltonian reduction}

\maketitle

\begin{abstract}
Losev introduced the scheme $X$ of almost commuting elements (i.e., elements commuting upto a rank one element) of $\mathfrak{g}=\mathfrak{sp}(V)$ for a symplectic vector space $V$ and discussed its algebro-geometric properties. We construct a Lagrangian subscheme $X^{nil}$ of $X$ and show that it is a complete intersection of dimension $\text{dim}(\mathfrak{g})+\frac{1}{2}\text{dim}(V)$ and compute its irreducible components.

We also study the quantum Hamiltonian reduction of the algebra $\mathcal{D}(\mathfrak{g})$ of differential operators on the Lie algebra $\mathfrak{g}$ tensored with the Weyl algebra with respect to the action of the symplectic group, and show that it is isomorphic to the spherical subalgebra of a certain rational Cherednik algebra of Type $C$.  We contruct a category $\cc_c$ of $\dd$-modules whose characteristic variety is contained in $X^{nil}$ and construct an exact functor from this category to the category $\oo$ of the above rational Cherednik algebra. Simple objects of the category $\cc_c$ are mirabolic analogs of Lusztig's character sheaves.
\end{abstract}

\section{Introduction}

\blfootnote{

  P. Goyal: Department of Mathematics, University of Chicago,
    Chicago, IL, 60637\par\nopagebreak
  \textit{E-mail address}: \texttt{pallav@math.uchicago.edu, pallavg@ucr.edu} \par\nopagebreak
    ORCID: 0000-0002-8983-0523
}
\subsection{}
Let $V:=\CC^{2n}$ be a symplectic vector space and let $\Gg$ denote the Lie algebra $\mathfrak{sp}(V)=\mathfrak{sp}_{2n}$. The almost commuting scheme $X$ of $\Gg$ was defined by Losev in \cite{L} as the closed subscheme of $\Gg\times\Gg\times V$ defined by the ideal $I$ generated by the matrix entries of $[x,y]+i^2$, i.e., by all functions of the form $(x,y,i)\mapsto \lambda([x,y]+i^2)$ for $\lambda\in \Gg^*$. Here, we use the fact that $\Sym^2(V)$ can be identified with $\mathfrak{sp}(V)$ to view $i^2$ as an element of $\mathfrak{sp}(V)$. The geometrical properties of $X$ were studied by Losev who showed that:

\begin{theorem} [\cite{L}] \label{theo:Losmain}
The scheme $X$ is reduced, irreducible and a complete intersection of dimension $2n^2+3n=\dim(\Gg)+\dim(V)$.
\end{theorem}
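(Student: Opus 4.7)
The plan is to realise $X$ as the zero fibre $\mu^{-1}(0)$ of the map $\mu:\Gg\oplus\Gg\oplus V\to\Gg$, $(x,y,i)\mapsto[x,y]+i^2$, which is the moment map for the $Sp(V)$-action on the symplectic manifold $T^*\Gg\times V$ (using $V\cong V^*$ via the symplectic form and $\Sym^2V\cong\mathfrak{sp}(V)$). I would follow the standard three-step route for moment-map schemes: a Krull lower bound on components, an upper bound via an explicit smooth open stratum, and irreducibility via Weyl monodromy. Since $\dim(\Gg\oplus\Gg\oplus V)=4n^2+4n$ and $I$ is generated by $\dim\Gg=2n^2+n$ functions, Krull's Hauptidealsatz gives $\dim Z\geq 2n^2+3n$ for every irreducible component $Z\subseteq X$.

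The first substantive step is to describe the open locus $X^{rss}\subseteq X$ where $x$ is regular semisimple. Fix the Cartan $\Hh=\mathfrak{z}_\Gg(x)$ and use the root decomposition $\Gg=\Hh\oplus\bigoplus_\alpha\Gg_\alpha$: the equation $[x,y]=-i^2$ decouples into a $\Hh$-component (a quadratic condition on $i$) and root components $y_\alpha=-\alpha(x)^{-1}(i^2)_\alpha$, which force $y$ modulo $\Hh$. Writing $i=\sum_j(a_je_j+b_jf_j)$ in a symplectic basis adapted to $\Hh$, the identification $\Sym^2V\cong\Gg$ yields $\mathrm{pr}_\Hh(i^2)=-2\sum_ja_jb_jh_j$, so the vanishing condition reduces to $a_jb_j=0$ for $j=1,\ldots,n$. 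Hence the fibre of $X^{rss}\to\Gg^{rss}$ over $x$ is a union of $2^n$ copies of $\Hh\times\CC^n$, each of dimension $2n$, giving $\dim X^{rss}=\dim\Gg+2n=2n^2+3n$ together with smoothness (since $\ad x$ is invertible on $\bigoplus_\alpha\Gg_\alpha$, $d\mu$ is surjective along $X^{rss}$).

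To upgrade from pointwise components to global irreducibility, I would invoke Weyl monodromy. The monodromy of the family $X^{rss}\to\Gg^{rss}$ factors through $W=S_n\ltimes(\ZZ/2)^n$, and the sign-flip subgroup $(\ZZ/2)^n$ acts on the fibre $V$ by independently swapping $a_j\leftrightarrow b_j$, hence permutes the $2^n$ fibre components transitively. Therefore $X^{rss}$ is a single smooth irreducible locally closed subscheme of $X$ of dimension $2n^2+3n$.

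The main obstacle is the density step: showing $\dim(X\setminus X^{rss})<2n^2+3n$. I would stratify $\Gg$ by Jordan type; for a stratum $\Gg_d$ with centraliser dimension $d$, the familiar bound $\dim\Gg_d\leq\dim\Gg+n-d$ (from the pure-dimensionality of the usual commuting variety of $\Gg$) must be combined with a quadratic transversality estimate of the form $\dim\{i\in V:i^2\in\im\ad x\}\leq 2n-(d-n)$ for $d>n$, which I expect to extract from the orthogonal decomposition $\Gg=\mathfrak{z}(x_s)\oplus[\Gg,x_s]$ at the semisimple part together with a Jordan-block analysis of the nilpotent part. Granting this estimate, every non-rss stratum contributes strictly less than $2n^2+3n$ to $\dim X$, so $X^{rss}$ is dense. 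Then $X$ is a pure-dimensional complete intersection (hence Cohen-Macaulay); density of the irreducible $X^{rss}$ yields irreducibility of $X$, and smoothness of the dense open $X^{rss}$ combined with Cohen-Macaulayness gives reducedness.
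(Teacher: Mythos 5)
Your route is genuinely different from the paper's, and your first two steps are essentially correct, but the argument stalls precisely at the point you flag as the main obstacle. The paper does not bound the bad strata by a case-by-case Jordan analysis. Instead it first proves (Theorem~\ref{theo:Lag}) that the nilpotent locus $X^{nil}\subset X$ is a Lagrangian complete intersection, by composing the symplectic embedding $\Phi:(x,y,i)\mapsto(x,y,i/2,i^{\vee})$ of $\Gg\times\Gg\times V$ into $\gl(V)\times\gl(V)\times V\times V^*$ with the known result from \cite{GG} that the $\gl$-analogue $M^{nil}$ is Lagrangian; this gives $\dim X^{nil}=\dim\Gg+\tfrac12\dim V$ for free. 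Corollary~\ref{cor:compint} then forces $\dim X=\dim\Gg+\dim V$ and the complete-intersection property, and Proposition~\ref{prop:general} upgrades the same embedding trick to a \emph{uniform} Lagrangian bound on every fibre of $\pi:X\to\Hh/\!/W$, $(x,y,i)\mapsto\phi(y)$. With that in hand the density of $X^{reg}$ is a one-line dimension count over the discriminant $\Delta$, irreducibility of $X^{reg}$ is imported from \cite[Lemma 2.9]{L}, and reducedness follows from generic smoothness plus Cohen--Macaulayness. The symplectic embedding into the $\gl$-picture is the mechanism that lets the paper avoid exactly the stratum-by-stratum estimate your proposal needs.

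The concrete gap is the ``quadratic transversality estimate'' $\dim\{i\in V:i^2\in\im\ad x\}\leq 2n-(d-n)$. As literally stated it is false: for $x=0$ one has $d=\dim\Gg=2n^2+n$, yet $\{i:i^2\in\im\ad 0\}=\{i:i^2=0\}=\{0\}$ has dimension $0$, which exceeds $3n-d=2n-2n^2$ for $n\geq 2$. Even a corrected nonnegative version $\dim\{i:i^2\in\im\ad x\}\leq\max(0,3n-d)$ would not close the argument: combining with $\dim\Gg_d\leq\dim\Gg+n-d$ gives a stratum contribution of at most $\dim\Gg+4n-d$, which only drops below $\dim\Gg+2n$ when $d>2n$, leaving the range $n<d\leq 2n$ uncontrolled. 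What is actually needed is the sharper bound $\dim\{i:i^2\in\im\ad x\}<n$ whenever $x$ is not regular semisimple, and that is not a formal consequence of the Jordan decomposition; it requires a genuine argument, which is what the paper's Lagrangian-embedding technique supplies. Your analysis of $X^{rss}$ and the Weyl-monodromy permutation of the $2^n$ fibre components are sound in outline (note only that $d\mu$ fails to be surjective on the closed sublocus where some pair $(a_j,b_j)=(0,0)$, so $X^{rss}$ is only generically, not everywhere, smooth — which is still enough), but without the density estimate the proof does not go through.
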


In this paper, we consider the reduced subscheme $X^{nil}$ of $X$ defined as:
\[X^{nil}:=\{(x,y,i)\in \Gg\times \Gg\times V: [x,y]+i^2=0 \text{ and } y \text{ is nilpotent}\}.\]
This definition is motivated by the notion of character sheaves, first defined by Lusztig (see \cite{Lus, Lus1, Lus2, Lus3, Lus4}). It was shown by Mirkovi\'c and Vilonen \cite{MV} and Ginzburg \cite{Gi} that over $\CC$, a character sheaf on a reductive algebraic group $K$ can be defined as an $Ad(K)$-equivariant perverse sheaf $M$ on $K$, such that the corresponding characteristic variety lies in the nilpotent locus $K\times\nn \sub K\times\mathfrak{k}^*$, where $\mathfrak{k}=\Lie(K)$ and $\nn\sub\mathfrak{k}^*\simeq\mathfrak{k}$ is the nilpotent cone. Constructions analogous to $X^{nil}$ were done in \cite{GG, FG, FG2} to provide `mirabolic' analogs of these character sheaves in Type $A$.

We fix some notation. It is known (for example, see \cite[Theorem 5.1.3]{DC}) that nilpotent conjugacy classes in $\Gg$ are parametrized by the partitions $\lambda$ of $2n$ in which every odd part appears an even number of times. Let $P_n$ be the set of all such partitions and let $\pp_n$ denote the subset of those partitions in $P_n$ in which all the parts are even. For each $\lambda\in P_n$, let $\nn_{\lambda}$ denote the corresponding nilpotent conjugacy class in $\Gg$. Define for each $\lambda\in P_n$:
\[X_{\lambda}:=\{(x,y,i)\in X^{nil}: y\in \nn_{\lambda}\}.\]
Let $\ol{X_{\lambda}}$ denote the closure of $X_{\lambda}$ in $X^{nil}$.

Note that we can identify $\Gg\times \Gg\times V$ with $T^*(\Gg)\times V$ using the trace form on $\Gg$. This gives $\Gg\times \Gg\times V$ a natural symplectic structure. Our first main result reads:
\begin{theorem} \label{theo:subsch}
\begin{enumerate} [label=(\alph*)]
\item The scheme $X^{nil}$ is a complete intersection in $\Gg\times\Gg\times V$ of dimension $2n^2+2n$. The irreducible components of $X^{nil}$ are exactly given by the $\ol{X_{\lambda}}$ for $\lambda\in\pp_n$.
\item With the natural symplectic structure, $X^{nil}$ is a Lagrangian subscheme of $\Gg\times\Gg\times V$.
\end{enumerate}
\end{theorem}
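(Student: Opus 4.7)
My strategy is to establish part (a) by an equidimensionality argument via the strata of $y$, and then derive the Lagrangian property in (b) from a Poisson-bracket computation on the defining functions of $X^{nil}$.

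For the lower bound in (a), I note that $X^{nil}$ is cut out inside the smooth ambient $\Gg\times\Gg\times V$ (of dimension $4n^2+4n$) by $\dim\Gg+n=2n^2+2n$ equations: the $\dim\Gg=2n^2+n$ matrix entries of $[x,y]+i^2$, together with the $n$ homogeneous generators $p_1,\ldots,p_n$ of $\CC[\Gg]^{Sp(V)}$ evaluated at $y$ (whose vanishing defines the nilpotent cone in $\Gg$). By Krull's Hauptidealsatz every irreducible component of $X^{nil}$ has dimension at least $2n^2+2n$. For the matching upper bound, I analyze each stratum $X_\lambda$ via the $Sp(V)$-equivariant projection $\pi\colon X_\lambda\to\nn_\lambda$, $(x,y,i)\mapsto y$. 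Since $Sp(V)$ acts transitively on $\nn_\lambda$, it suffices to compute the fiber $\pi^{-1}(y)=\{(x,i)\in\Gg\times V:[x,y]+i^2=0\}$ over a fixed $y\in\nn_\lambda$; this fiber further projects to $V_y:=\{i\in V:i^2\in[y,\Gg]\}\subset V$ as an affine bundle with fibers torsors under $\Gg^y$. Using $\dim\nn_\lambda+\dim\Gg^y=\dim\Gg$ one obtains $\dim X_\lambda=\dim\Gg+\dim V_y$.

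The principal obstacle is showing $\dim V_y\leq n$, with equality if and only if $\lambda\in\pp_n$. To this end I would use the isomorphism $\Gg\simeq\Sym^2 V$ and the trace pairing to rewrite the condition $i^2\in[y,\Gg]=(\Gg^y)^{\perp}$ as the simultaneous vanishing of the quadratic forms $q_z(i):=\omega(i,zi)$ for $z\in\Gg^y$. Embedding $y$ in an $\sll_2$-triple (Jacobson--Morozov), decomposing $V$ and $\Gg^y$ into $\sll_2$-weight spaces (with $\Gg^y$ spanned by highest-weight vectors in the adjoint representation), a block-by-block analysis of the $q_z$ should show that pairs of odd-dimensional $\sll_2$-summands of $V$ produce strictly more independent quadratic constraints on $i$ than even-dimensional summands do; consequently the codimension of $V_y$ in $V$ equals $n$ exactly when every Jordan block of $y$ is of even size, i.e., $\lambda\in\pp_n$, and is strictly larger otherwise. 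Combining both bounds, $X^{nil}$ is equidimensional of dimension $2n^2+2n$, hence a (global) complete intersection in the smooth ambient $\Gg\times\Gg\times V$, and its irreducible components are precisely the closures $\ol{X_\lambda}$ for $\lambda\in\pp_n$ (each $X_\lambda$ being irreducible since $Sp(V)$ acts transitively on $\nn_\lambda$ with irreducible fibers).

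For part (b), since $\dim X^{nil}=\tfrac{1}{2}\dim(\Gg\times\Gg\times V)$ it suffices to verify that the ambient symplectic form vanishes on the smooth locus of $X^{nil}$. The defining functions $\mu_a:=\langle[x,y]+i^2,a\rangle$ (for $a\in\Gg$) and $p_i(y)$ satisfy the Poisson relations
\[\{\mu_a,\mu_b\}=\mu_{[a,b]},\qquad \{p_i,\mu_a\}=0,\qquad \{p_i,p_j\}=0,\]
the first being the moment-map property, the second the $Sp(V)$-invariance of $p_i$, and the third because the $p_i$ depend only on the cotangent-fibre coordinate $y$ on $T^*\Gg$. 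Thus all pairwise brackets lie in the ideal of $X^{nil}$. At a smooth point $p\in X^{nil}$ where the defining functions have linearly independent differentials, the span $W\subseteq T_p(\Gg\times\Gg\times V)$ of the corresponding Hamiltonian vector fields is isotropic of dimension $2n^2+2n=\tfrac{1}{2}\dim(\Gg\times\Gg\times V)$; as $W$ is the symplectic perpendicular of $T_pX^{nil}$, the dimension count forces $T_pX^{nil}=W$, so $X^{nil}$ is Lagrangian at $p$. Since complete intersections are Cohen--Macaulay with a dense smooth locus on each component, this property propagates to all of $X^{nil}$. The only delicate ingredient in the entire argument is the quadric analysis giving $\dim V_y=n$ precisely when $\lambda\in\pp_n$.
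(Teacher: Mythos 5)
Your overall strategy for part (a) parallels the paper's: both reduce the dimension of the stratum $X_\lambda$ to the quantity $\dim(\Gg)+\dim V_y$ (your $V_y$ is the preimage of the paper's $Y_y$ under $v\mapsto v^2$), and both use the Jacobson--Morozov $\sll_2$-triple through $y$ and the identification $\Gg\simeq\Sym^2 V$. The genuine gap is in the step you yourself flag as ``the only delicate ingredient.'' The paper proves a precise $\sll_2$ lemma (Lemma~\ref{lem:sl2}) showing that $V_y$ is \emph{exactly} the linear subspace $V_+$ spanned by the positive $h$-eigenspaces of $V$. This does three things at once: it gives $\dim V_y=\dim V_+\le n$; it shows equality holds iff $V_0=0$, i.e.\ iff all Jordan blocks of $y$ are even; and it shows $V_y$ is an irreducible (indeed linear) variety, which is needed to conclude that each $X_\lambda$ is irreducible and hence that the components of $X^{nil}$ really are the $\ol{X_\lambda}$, $\lambda\in\pp_n$, rather than some finer decomposition. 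Your proposed ``block-by-block analysis of the quadrics $q_z$'' does not deliver this: the common zero locus of a family of quadratic forms is in general neither linear nor irreducible, and the phrase ``strictly more independent quadratic constraints'' is not a well-defined way to count codimension for an intersection of quadrics. You also implicitly assume the fibers of $X_\lambda\to\nn_\lambda$ are irreducible without justification; this is exactly what the linearity of $V_y$ supplies. So the heart of part (a) is left unproved.

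For part (b) your approach is a genuine and correct alternative. The paper proves isotropy of $X^{nil}$ via a symplectic embedding $\Phi\colon \Gg\times\Gg\times V\hra \gl(V)\times\gl(V)\times V\times V^*$ that sends $X^{nil}$ into the Lagrangian $M^{nil}$ from \cite{GG}, thereby obtaining (b) \emph{independently} of (a). Your Poisson-bracket computation $\{\mu_a,\mu_b\}=\mu_{[a,b]}$, $\{p_i,\mu_a\}=0$, $\{p_i,p_j\}=0$ is correct and elegant, but note the logical ordering is reversed: to conclude that the Hamiltonian vector fields of the $2n^2+2n$ defining functions span a subspace of the full half-dimension at a smooth point, you need the differentials to be linearly independent there, which is the Jacobian criterion for the complete-intersection structure established in (a). So your (b) sits downstream of the missing lemma, whereas the paper's does not. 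If you fill in the $\sll_2$ lemma, your overall argument goes through and the Poisson-bracket route to (b) is a nice self-contained alternative to the embedding into $\gl(V)$.
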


A similar Lagrangian subscheme was constructed in \cite{GG} in the context of the almost commuting scheme of the Lie algebra $\gl_n$. Using Theorem~\ref{theo:subsch}, we provide an independent proof of Theorem~\ref{theo:Losmain} in the style of \cite{GG}, that eliminates the use of results from \cite{Los2}. Furthermore, the scheme $X^{nil}$ will be used in $\mathsection\ref{sec:QHRF}$ to provide a mirabolic analog of Lusztig's character sheaves in Type $C$.

\subsection{}
In the second half of the paper, we discuss some Hamiltonian reduction problems arising in the context of the scheme $X$ and some other related schemes. For this, we define the following subschemes of $\Gg\times \Gg=\Spec(\CC[\Gg\times \Gg])=\Spec(\CC[x,y])$. Consider the commuting scheme $C$ which is the (not necessarily reduced) subscheme of $\Gg\times \Gg$ defined by the ideal of $\CC[x,y]$ generated by the matrix entries of the commutator $[x,y]$. Next, define the scheme $A$ to be the (not necessarily reduced) subscheme of $\Gg\times \Gg$ defined by the ideal of $\CC[x,y]$ generated by all the $2\times 2$ minors of the commutator $[x,y]$.

Note that the set of $\CC$-points of the underlying reduced subscheme of $C$ consists of pairs of elements of $\Gg$ that commute with each other, whereas that of $A$ consists of pairs of elements of $\Gg$ whose commutator has rank less than or equal to one. The commuting scheme $C$ is of wide interest, and its geometrical properties (most notably, its reducedness) are largely unknown. It is known that $C$ is irreducible (see \cite{Ri}).

The schemes $X, C$ and $A$ have an action of the group $G=Sp(V)$ obtained by the adjoint action on $\Gg$ and the natural action on $V$. Hence, we can consider the respective categorical quotients of these schemes by the action of $G$. While it isn't known if $C$ is reduced, it was shown in \cite{L} that there is an isomorphism:
\[C /\!/ G\lra X/\!/G,\]
which, paired with Theorem~\ref{theo:Losmain}, implies that $C/\!/G$ is reduced. (That $C/\!/G$ is reduced was deduced independently in \cite{NC} slightly earlier, by proving a version of the Chevalley restriction theorem for the commuting scheme of $\Gg$.) We extend this isomorphism to show that $C/\!/G \simeq X/\!/G \simeq A/\!/G$. In fact, we prove the following stronger result:
\begin{theorem} \label{theo:red}
We have an isomorphism of schemes:
\[X/\!/\{\pm 1\}\lra A/\!/\{\pm 1\} = A,\]
where $\{\pm 1\}\sub G$ is the center of the symplectic group. In particular, the scheme $A$ is reduced.
\end{theorem}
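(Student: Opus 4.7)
The plan is to show that the projection $\pi:X\to\Gg\times\Gg$, $(x,y,i)\mapsto (x,y)$, has image in $A$, is $\{\pm 1\}$-equivariant where $\{\pm 1\}$ acts trivially on the target (so $A/\!/\{\pm 1\}=A$), and induces an isomorphism of schemes $\bar\pi:X/\!/\{\pm 1\}\xrightarrow{\sim} A$. Reducedness of $A$ will then follow from reducedness of $X$ (Theorem~\ref{theo:Losmain}). The image of $\pi$ lies in $A$ because $[x,y]+i^2=0$ forces $[x,y]=-i^2$, and $i^2\in\Gg$ corresponds under $\Sym^2(V)\cong\Gg$ to the rank-$\leq 1$ symmetric tensor $-i\otimes i$; hence every $2\times 2$ minor of $[x,y]$ vanishes. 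Triviality of the $\{\pm 1\}$-action on $\Gg\times\Gg$ follows from its centrality in $G$.

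To establish that $\bar\pi^*:\CC[A]\to\CC[X]^{\{\pm 1\}}$ is an isomorphism, I would compute the invariant subring explicitly. The generators $\lambda([x,y]+i^2)$ of the defining ideal $I\subset\CC[\Gg\times\Gg\times V]$ are themselves $\{\pm 1\}$-invariant (being quadratic in $i$), so by linear reductivity of $\{\pm 1\}$,
\[\CC[X]^{\{\pm 1\}}=\big(\CC[\Gg\times\Gg]\otimes\Sym^{\text{even}}(V^*)\big)\big/(\text{entries of }[x,y]+i^2),\]
where $\Sym^{\text{even}}(V^*)\subset\CC[V]$ is the Veronese subring generated by quadratic forms. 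The key observation is that $\Sym^{\text{even}}(V^*)\cong\CC[\Gg]/(2\times 2\text{ minors})$ via the iso $\Sym^2(V)\cong\Gg$, this being the standard symmetric determinantal presentation: the closed immersion $V/\!/\{\pm 1\}\hookrightarrow\Sym^2(V)\cong\Gg$, $v\mapsto v\otimes v$, realizes the rank-$\leq 1$ locus scheme-theoretically cut out by $2\times 2$ minors. Under this identification, the universal element $z$ of the third $\Gg$-factor corresponds to $i^2$, and the relations defining $I$ become $[x,y]+z=0$. Eliminating $z=-[x,y]$ then yields
\[\CC[X]^{\{\pm 1\}}\cong\CC[\Gg\times\Gg]/(2\times 2\text{ minors of }[x,y])=\CC[A].\]
Hence $\bar\pi$ is an isomorphism, and since $X$ is reduced, so is its subring of invariants $\CC[A]$, proving that $A$ is reduced.

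The main subtlety will be the identification $\Sym^{\text{even}}(V^*)\cong\CC[\Gg]/(2\times 2\text{ minors})$: one must check that the $2\times 2$ minors of a generic $\mathfrak{sp}$-matrix $M$ (used in the definition of $A$) generate the same ideal as the minors of the corresponding symmetric matrix $JM$ arising from the Veronese presentation. This follows from invertibility of the symplectic form matrix $J$ via Cauchy--Binet, since the two families of minors differ by linear combinations with invertible transition matrix. With this point settled, the remainder of the argument is a formal elimination of the auxiliary $\Gg$-factor, together with the standard fact that invariants of a reduced ring under a finite linearly reductive group form a reduced ring.
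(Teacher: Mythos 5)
Your argument is correct and essentially follows the paper's own route. Both proofs rest on identifying $\CC[V]^{\{\pm1\}}$ with $\CC[\Gg]/(\text{$2\times2$ minors})$ via the Veronese map $v\mapsto v^2$ and the second fundamental theorem for $O(1)$ (the paper's Lemma~\ref{lem:rank1} and Corollary~\ref{cor:classinv}), and both then substitute $z=-[x,y]$ to eliminate the auxiliary copy of $\Gg$. The one presentational difference is that the paper packages this elimination into the abstract Lemma~\ref{lem:maintech}, stated for possibly non-commutative algebras carrying an antipode $S$; that generality is unnecessary here, but it is reused verbatim in the proof of Theorem~\ref{theo:phi}, where the same elimination takes place inside $\dd(\Gg)\otimes W_{2n}$ and your inline substitution would have to be redone with more care because one is working with one-sided ideals. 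Your explicit Cauchy--Binet remark --- that the $2\times2$ minors of $[x,y]\in\spp_{2n}\sub\gl_{2n}$ and those of the associated symmetric matrix from the Veronese presentation generate the same ideal, since the form matrix $J$ is invertible and the two families of minors differ by the invertible second compound of $J$ --- is a point the paper passes over silently in Lemma~\ref{lem:rank1}, and is worth recording. The reducedness deduction, namely that $\CC[A]\cong\CC[X]^{\{\pm1\}}$ is a subring of the reduced ring $\CC[X]$ and hence reduced, is the same as the paper's.
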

An analog of the isomorphism $X/\!/G \simeq A/\!/G$ for the Lie algebra $\gl_n$ was proved in \cite{GG}. The theorem is deduced from a linear algebraic lemma (Lemma~\ref{lem:maintech}), which also implies an algebro-geometric analog of the `shifting trick' in the theory of Hamiltonian reduction, that is well-known in the differential-geometric setting (see \cite{GS}, \cite{CS}).

\subsection{}
The above categorical quotients can (and will) all be viewed as classical Hamiltonian reductions of certain schemes under the action of the group $G$:
\begin{itemize}
\item The scheme $X/\!/G$ is the reduction of the scheme $\Gg\times \Gg\times V$ with respect to $G$ at $0$.
\item The scheme $C/\!/G$ is the reduction of the scheme $\Gg\times \Gg$ with respect to $G$ at $0$.
\item The scheme $A/\!/G$ is the reduction of the scheme $\Gg\times \Gg$ with respect to $G$ at the closure of the orbit of rank 1 matrices in $\Gg\simeq \Gg^*$.
\end{itemize}

So, we can try to study the non-commutative or quantum analogs of these classical reductions. For this, let $\uu\Gg$ denote the universal enveloping algebra of $\Gg$, let $\dd(\Gg)$ denote the algebra of polynomial differential operators on $\Gg$ and let $W_{2n}$ denote the Weyl algebra on $2n$ variables, which is the algebra of polynomial differential operators on the affine $n$-space. Then, $\dd(\Gg)$ is a quantization of $\CC[\Gg\times\Gg]\simeq \CC[\Gg\times\Gg^*]$, whereas $W_{2n}$ is a quantization of $\CC[V]$. Both the algebras $\dd(\Gg)$ and $W_{2n}$ have a natural $\Gg$-action (and, thus, so does their tensor product.) So, we get quantum co-moment maps corresponding to this $\Gg$-action:
\[\Theta_0: \uu\Gg \lra \dd(\Gg),\]
\[\Theta_2: \uu\Gg \lra \dd(\Gg)\otimes W_{2n}.\]
(These maps are elaborated upon in $\mathsection{\ref{sec:quant}}$.) Then, we can consider the following non-commutative algebras:
\begin{itemize}
\item The reduction $\Big((\dd(\Gg)\otimes W_{2n})/(\dd(\Gg)\otimes W_{2n})\cdot\Theta_2(\Gg)\Big)^{\Gg}$ of $\dd(\Gg)\otimes W_{2n}$ at the augmentation ideal of $\uu\Gg$.
\item The reduction $\Big(\dd(\Gg)/\dd(\Gg)\cdot\Theta_0(\Gg)\Big)^{\Gg}$ of $\dd(\Gg)$ at the augmentation ideal of $\uu\Gg$.
\item The reduction $\Big(\dd(\Gg)/\dd(\Gg)\cdot\Theta_0(\kk)\Big)^{\Gg}$ of $\dd(\Gg)$ at the unique primitive ideal $\kk\sub\uu\Gg$ such that $\gr(\kk)\sub\CC[\Gg^*]$ is the defining ideal of the orbit of rank 1 matrices.
\end{itemize}
(Each of these algebras is discussed in detail in $\mathsection\ref{sec:quant}.$)

The algebra $\Big(\dd(\Gg)/\dd(\Gg)\cdot\Theta_0(\Gg)\Big)^{\Gg}$ has been studied classically by Harish-Chandra (see \cite{HC}) who constructed a surjective algebra homorphism called the `radial parts' homomorphism:
\[\dd(\Gg)^{\Gg} \lra \dd(\Hh)^{W},\]
where $\Hh$ is a Cartan subalgebra of $\Gg$ and $W$ is the Weyl group. The kernel of this homomorphism was shown to be precisely $(\dd(\Gg)\cdot\Theta_0(\Gg))^{\Gg}$ in the works of Wallach \cite{Wa} and Levasseur and Stafford \cite{LS1, LS2}, implying that the algebra $\Big(\dd(\Gg)/\dd(\Gg)\cdot\Theta_0(\Gg)\Big)^{\Gg}$ is isomorphic to $\dd(\Hh)^W$.

In this paper, we'll discuss the other two quantum Hamiltonian reduction problems. For this, we recall the rational Cherednik algebra $H_c$ of Type $C$, first defined in \cite{EG}. Here, the parameter $c=(c_{long},c_{short})$ lies in $\CC^2$. Let $e=\frac{1}{|W|}\sum_{w\in W}w$ be the averaging idempotent of the Weyl group $W$ and consider the spherical subalgebra $eH_ce\sub H_c$ of the Cherednik algebra. (The notation is elaborated on in $\mathsection{\ref{sec:RCA}}$.)

We prove the following theorem about these algebras:
\begin{theorem} \label{theo:quant}
We have algebra isomorphisms:
\[\Big((\dd(\Gg)\otimes W_{2n})/(\dd(\Gg)\otimes W_{2n})\cdot\Theta_2(\Gg)\Big)^{\Gg} \simeq \Big(\dd(\Gg)/\dd(\Gg)\cdot \Theta_0(\kk)\Big)^{\Gg} \simeq eH_ce,\]
for the parameter $c=(-1/4,-1/2)$.
\end{theorem}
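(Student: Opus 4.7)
The plan would proceed in two stages corresponding to the two isomorphisms, followed by a parameter identification.

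\emph{First isomorphism.} The key input is that the quantum co-moment map $\theta: \uu\Gg \to W_{2n}$ for the natural $Sp(V)$-action on $V$ is precisely the oscillator (Weil) representation realization. My first step would be to verify that $\ker(\theta) = \kk$ by showing its associated graded ideal in $\CC[\Gg^*]$ is the defining ideal of the closure of the rank-one orbit; this reduces to the classical fact that the image of the moment map $V \to \Gg^*$ is that orbit closure, via the identification $\Sym^2(V)\simeq\mathfrak{sp}(V)$ already used to define $X$. Since $\Theta_2(\xi) = \Theta_0(\xi)\otimes 1 + 1\otimes \theta(\xi)$ and $\theta$ factors as $\uu\Gg \twoheadrightarrow \uu\Gg/\kk \hookrightarrow W_{2n}$, I would then apply a quantum shifting argument in the spirit of \cite{GG} to realize both reduced algebras as the $\Gg$-invariant endomorphism algebra of a common $\dd(\Gg)\otimes\uu\Gg$-module built from the simple Weil module $\CC[x_1,\ldots,x_n]$. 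To turn the canonical map obtained this way into an isomorphism, I would check that the Weil module is a projective generator for $\uu\Gg/\kk$ in a suitable category of admissible modules, or alternatively compare filtrations on both sides to conclude bijectivity.

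\emph{Second isomorphism.} For the identification with $eH_ce$, my plan is to deform the Harish-Chandra / Levasseur-Stafford radial parts construction of \cite{HC, LS1, LS2, Wa}. Classically, the radial parts homomorphism $\dd(\Gg)^\Gg \to \dd(\Hh)^W$ has kernel exactly $(\dd(\Gg)\Theta_0(\Gg))^\Gg$, yielding $\big(\dd(\Gg)/\dd(\Gg)\Theta_0(\Gg)\big)^\Gg \simeq \dd(\Hh)^W = eH_0e$. Replacing the augmentation ideal by the non-trivial primitive ideal $\kk$ deforms this construction: the extra generators of $\kk$ beyond $\Gg$ produce Dunkl-type correction terms, yielding a map into $eH_ce$ for a specific $c$. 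I would show this map is an isomorphism by a standard associated-graded argument with respect to the order filtration on $\dd(\Gg)$: on the classical level, the reduction becomes $\CC[A/\!/G]$ thanks to Theorem~\ref{theo:red} (which ensures reducedness), and this matches $\CC[\Hh\oplus\Hh^*]^W$, the associated graded of $eH_ce$, via a Chevalley restriction argument applied to the rank-one orbit variety.

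\emph{Parameter identification and main obstacle.} The precise values $c_{long}=-1/4$, $c_{short}=-1/2$ would be pinned down by computing the radial part of the quadratic Casimir of $\Gg$ modulo $\Theta_0(\kk)$ and comparing its restriction to a generic point of $\Hh$ with the Calogero-Moser Hamiltonian arising from $\sum_i x_i^2 \in eH_ce$ under the Dunkl embedding. The ratio of contributions from the two Weyl orbits of roots in $C_n$, namely the long roots $\{\pm 2\epsilon_i\}$ and the short roots $\{\pm\epsilon_i\pm\epsilon_j\}$, determines the ratio $c_{long}/c_{short}$; the overall normalization is then fixed by the specific shift dictated by $\kk$ versus the augmentation ideal. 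I expect this parameter computation to be the main technical obstacle: the conventions for the trace form on $\Gg$, the normalization of the symplectic form on $V$ entering $\theta$, the isomorphism $\Sym^2(V)\simeq\mathfrak{sp}(V)$, and the Cherednik algebra parameter conventions must all be tracked simultaneously to produce the stated values $(-1/4,-1/2)$.
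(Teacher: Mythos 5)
Your overall scaffolding is right: the proof really does split into a quantum shifting isomorphism and a radial-parts identification, and the parameter computation really is the delicate normalization check. But there are two concrete issues worth flagging.

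\textbf{First isomorphism.} The paper does not go through a Morita-type argument with the Weil module as a "projective generator." Instead it proves a short, elementary linear-algebraic Lemma~\ref{lem:maintech}: if $\aaa$ is generated by a subspace $\mathcal{V}$ and the map $v\mapsto -v$ extends to an algebra anti-automorphism $S$ of $\aaa$, then for any surjective $f_1:\aaa\twoheadrightarrow\bb_1$ with kernel $\mathcal{I}$, the map $b\mapsto b\otimes 1$ induces a vector space isomorphism
\[
\bb_0/\bigl(\bb_0\cdot f_0(S(\mathcal{I}))\bigr)\;\simeq\;(\bb_0\otimes\bb_1)/\bigl((\bb_0\otimes\bb_1)\cdot f_2(\mathcal{V})\bigr).
\]
Applied with $\aaa=\uu\Gg$, $\bb_0=\dd(\Gg)$, $\bb_1=W_{2n,even}$, $f_i=\Theta_i$, $\mathcal{I}=\kk$, this gives the first isomorphism directly, at the level of $\{\pm1\}$-invariants (Theorem~\ref{theo:phi}); restricting to $\Gg$-invariants then yields the algebra isomorphism. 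The $S$-invariance of $\kk$ is checked via the anti-automorphism $x_i\mapsto\sqrt{-1}\,x_i$, $y_i\mapsto\sqrt{-1}\,y_i$ of $W_{2n}$. This is more elementary and more explicit than appealing to a generator/endomorphism picture, and avoids the issue that $\uu\Gg/\kk\hookrightarrow W_{2n}$ is a proper subalgebra, where the precise module-theoretic statement you would need is not obvious.

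\textbf{Second isomorphism.} Your description of a "deformed radial parts" construction is directionally correct, but there is a genuine gap in the choice of auxiliary representation. The Etingof--Ginzburg radial parts homomorphism $\Psi_{\mathfrak{V}}$ requires a $\uu\Gg$-module $\mathfrak{V}$ with a one-dimensional zero weight space $\mathfrak{V}\langle 0\rangle$. The natural candidate you invoke, the polynomial Weil module $\CC[x_1,\dots,x_n]$, has \emph{trivial} zero weight space: under $\Theta_1$, the Cartan element $E_{i,i}-E_{n+i,n+i}$ maps to $x_i\partial_{x_i}+\tfrac12$, which has eigenvalue $\tfrac12$ on the constant $1$, and no monomial is annihilated by all of these operators. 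The construction as stated would therefore not produce a character $\chi:(\uu\Gg)_{\Hh}\ra\CC$. The paper's fix is to take instead the twisted module of half-integral Laurent polynomials $\mathfrak{V}=(x_1\cdots x_n)^{-1/2}\cdot\CC[x_1^{\pm1},\dots,x_n^{\pm1}]$, whose zero weight space is spanned by $(x_1\cdots x_n)^{-1/2}$. It is on this vector that the operators $\Theta_1(e_\alpha e_{-\alpha})$ are evaluated in Lemma~\ref{lem:bestlem}, giving the eigenvalues $-3/16=c_{long}(c_{long}+1)$ for long roots and $-1/8=\tfrac12 c_{short}(c_{short}+1)$ for short roots, matching the Calogero--Moser operator at $c=(-1/4,-1/2)$. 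Your proposed computation of the radial part of the Casimir is exactly what Lemma~\ref{lem:bestlem} does, so once you substitute the correct module, your sketch aligns with the paper's argument, with bijectivity then closed by a filtered commutative diagram using Proposition~\ref{prop:PBW} and Corollary~\ref{lem:combi} rather than by a direct Chevalley-restriction argument on the rank-one orbit variety.
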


Analogs of this theorem were proved in \cite{EG} and \cite{GG} in the $\gl_n$-setting. This theorem shows that for the very special choice of the parameter $c=(-1/4,-1/2)$, the spherical subalgebra $eH_ce$ can be obtained as a quantum Hamiltonian reduction of the ring of differential operators on $\Gg$. The proof of the first isomorphism in the theorem follows from Lemma~\ref{lem:maintech}. The proof of the second isomorphism employs a generalization of the radial parts construction studied by Etingof and Ginzburg in \cite{EG}.

Inspired by the formalism in \cite[$\mathsection{7}$]{GG}, we define a certain category $\cc$ of holonomic $(\dd(\Gg)\otimes W_{2n})$-modules supported on $X^{nil}$. The simple objects of $\cc$ will be the mirabolic analogs of Lusztig's character sheaves in Type $C$. We construct an exact functor from $\cc$ to $\oo(eH_ce)$, the category $\oo$ of the spherical Cherednik algebra $eH_ce$ defined in \cite{BEG}.

\subsection{Organization}
Here, we give more details about the contents of the paper.

In $\mathsection\ref{sec:subsch},$ we prove Theorem~\ref{theo:subsch}. The proof of the fact that $X^{nil}$ is Lagrangian is by embedding it into the Lagrangian subscheme defined in \cite{GG}. The proof of the rest of the theorem will be seen to be a consequence of this fact and some elementary $\sll_2$-theory.

In $\mathsection\ref{sec:class}$, we prove Lemma~\ref{lem:maintech} and use it to deduce Theorem~\ref{theo:red}. In $\mathsection\ref{sec:RCA}$, we recall some definitions and results about Cherednik algebras and the generalization of Harish-Chandra's radial parts construction in \cite{EG}. In $\mathsection\ref{sec:quant}$, we define the algebras alluded to in the statement of Theorem~\ref{theo:quant}. In $\mathsection\ref{sec:isoone}$ and $\mathsection\ref{sec:isotwo}$, we construct maps between these algebras and prove that they are isomorphisms. Finally, in $\mathsection\ref{sec:QHRF},$ we note some results about the category $\cc$ and provide the construction of the functor from $\cc$ to $\oo(eH_ce)$.

\subsection{Acknowledgments} 
The author would like to thank Victor Ginzburg for motivating this line of work, for helpful suggestions and for his unparalleled patience, especially when this paper was being written. Thanks are also due to Ivan Losev for answering the author's questions about \cite{L}. Finally, discussions with Santosha Pattanayak led to the discovery of an error in an earlier draft of this paper, for which the author is extremely grateful.

\section{The nilpotent subscheme} \label{sec:subsch}
In this section, we consider the reduced subscheme of $X$ defined via:
\[X^{nil}:=\{(x,y,i)\in \Gg\times\Gg\times V: [x,y]+i^2=0 \text{ and } y \text{ is nilpotent}\}.\]
In $\mathsection{\ref{sec:Lag}}$, we'll prove that this is a Lagrangian subscheme of $\Gg\times\Gg\times V$. We will then use this result to provide a new proof of Theorem~\ref{theo:Losmain}. Next, in $\mathsection{\ref{sec:irredcomp}}$, we compute the irreducible components of $X^{nil}$ and show that they are parametrized by partitions of the positive integer $n$.

\subsection{Lagrangian subscheme} \label{sec:Lag}
To state the precise result, we first describe a symplectic structure on $\xx=\Gg\times\Gg\times V$, which we define as $\omega=\omega_1+\omega_2$. Here, $\omega_1$ is the symplectic form on $\Gg\times \Gg$ obtained by identifying it with $T^*(\Gg)$ using the trace form on $\Gg$ and $\omega_2$ is the form on the symplectic vector space $V$. Next, on the scheme $\mm=\gl(V)\times \gl(V)\times V \times V^*$, we have a symplectic form $\omega'$ obtained by identifying it with $T^*(\gl(V)\times V)$ using the trace form on $\gl(V)$.

\begin{theorem} \label{theo:Lag}
The scheme $X^{nil}$ is a Lagrangian complete intersection in $\xx$.
\end{theorem}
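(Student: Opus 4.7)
The plan is to deduce the theorem from its $\gl$-analogue proved by Gan-Ginzburg, via a natural closed embedding
\[
\phi: \xx \;\hookrightarrow\; \mm,
\qquad
\phi(x,y,i) \;=\; \bigl(x,\, y,\, i,\, c\,\omega_V(i)\bigr),
\]
where $\omega_V: V \to V^*$ is the isomorphism induced by the symplectic form on $V$ and $c$ is a scalar to be fixed. Under the identification $\Sym^2(V)\simeq\Gg$, the element $i^2\in\Gg$ is, up to a convention-dependent scalar, the rank-one operator $i\otimes\omega_V(i)\in\gl(V)$. Choosing $c$ to match this convention, $\phi$ pulls the equation $[X,Y]+ij=0$ back to $[x,y]+i^2=0$; since nilpotency of $y\in\Gg$ coincides with nilpotency of $y$ viewed in $\gl(V)$, the map $\phi$ identifies $X^{nil}$ scheme-theoretically with $\phi^{-1}(M^{nil}_{gl})$, where
\[
M^{nil}_{gl} \;:=\; \{(X,Y,i,j)\in\mm : [X,Y]+ij=0,\ Y\text{ nilpotent}\}
\]
is the Gan-Ginzburg Lagrangian.

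The key input from \cite{GG} is that $M^{nil}_{gl}$ is a Lagrangian complete intersection in $(\mm,\omega')$, of dimension $\tfrac12\dim\mm = 4n^2+2n$. A direct computation on tangent vectors gives $\phi^*\omega' = \omega_1+2c\,\omega_2$; with $c=\tfrac12$ chosen compatibly with the above, we get $\phi^*\omega' = \omega_1+\omega_2 = \omega$, so $\phi$ is a symplectic embedding. Isotropy then transfers at once: for $v,w\in T_p X^{nil}$ the pushforwards $d\phi(v),d\phi(w)$ lie in $T_{\phi(p)}M^{nil}_{gl}$, whence
\[
\omega(v,w) \;=\; \phi^*\omega'(v,w) \;=\; \omega'\bigl(d\phi(v),d\phi(w)\bigr) \;=\; 0.
\]
Hence $X^{nil}$ is isotropic in $(\xx,\omega)$.

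The dimension and complete intersection property then follow by pairing isotropy with a direct equation count. The scheme $X^{nil}\subset\xx$ is cut out by $\dim\Gg+\rank\Gg=(2n^2+n)+n=2n^2+2n$ scalar equations, namely the components of the moment map condition $[x,y]+i^2=0$ together with the $n$ fundamental invariants generating the ideal of the nilpotent cone in $\Gg$. This gives the lower bound $\dim X^{nil}\ge 4n^2+4n-(2n^2+2n)=2n^2+2n$. On the other hand, isotropy in the symplectic variety $\xx$ of dimension $4n^2+4n$ forces the matching upper bound $\dim X^{nil}\le 2n^2+2n$. Equality on both ends forces the defining equations to form a regular sequence; thus $X^{nil}$ is a complete intersection of the expected dimension $2n^2+2n$, and together with the established isotropy this shows $X^{nil}$ is Lagrangian.

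The principal technical point is not conceptual but combinatorial: one must carefully coordinate the scalar conventions in the identification $\Sym^2(V)\simeq\Gg$, the scalar $c$ in $\phi$, and the normalizations of $\omega_1,\omega_2,\omega'$ so that $\phi^*\omega'=\omega$ holds on the nose. With that bookkeeping settled, the proof is essentially formal: all the geometric content lives in the Gan-Ginzburg theorem being pulled back, and the rest is dimension counting.
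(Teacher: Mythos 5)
Your proof is correct and takes essentially the same approach as the paper's: both embed $\xx$ symplectically into the $\gl_{2n}$-space $\mm$ (the paper puts the scalar $1/2$ on the $V$-factor and identifies $V\simeq V^*$ on the other, which is cosmetically equivalent to your $c=1/2$ on the $V^*$-factor), pull back the isotropy of the Gan--Ginzburg Lagrangian $M^{nil}$, and pair this with the same $2n^2+2n$ equation count to deduce the Lagrangian complete-intersection property.
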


Recall that a (possibly singular) subscheme $Z$ of $\xx$ is said to be Lagrangian if at any smooth point $p$ of $Z$, the tangent space $T_pZ$ is a Lagrangian subspace of $T_p\xx$.

\begin{proof}
The scheme $X^{nil}$ is a closed subscheme of $\xx$ defined using $\dim(\Gg)+\frac{1}{2}\dim(V)$ equations. The formula $[x,y]+i^2=0$ gives $\dim(\Gg)$ of these equations, whereas the other $\frac{1}{2}\dim(V)$ equations follow from the nilpotence condition on $y$. So, $\dim(X^{nil})\geq \dim(\xx)-(\dim(\Gg)+\frac{1}{2}\dim(V))=\dim(\Gg)+\frac{1}{2}\dim(V)=\frac{1}{2}\dim(\xx)$. Therefore, to prove the theorem, it suffices to show that $X^{nil}$ is isotropic. For this, consider the embedding:
\[\Phi:\xx=\Gg\times \Gg\times V\lra \gl(V)\times \gl(V)\times V \times V^*=\mm\]
\[(x,y,i)\mapsto (x,y,i_1,i_2),\]
where $i_1=i/2$ and $i_2$ is the symplectic dual of $i$ in $V$. (That is, $i_2$ is the image of $i$ in $V^*$ under the identification $V\simeq V^*$ using the symplectic form.)

Recall from \cite{GG} the scheme of almost commuting matrices $M\sub \mm$:
\[M:=\{(x,y,i,j)\in \gl(V)\times \gl(V)\times V \times V^*: [x,y]+ij=0\}.\]
Also defined in \cite{GG} was the closed subscheme $M^{nil}$ of $M$ obtained by stipulating $y$ to be nilpotent. Then, under the map $\Phi$, we have $\Phi(X)\sub M$ and $\Phi(X^{nil})\sub M^{nil}$.

We claim that with the symplectic forms $\omega$ and $\omega'$ defined above, the map $\Phi$ is a symplectic embedding. To see this, we first observe that we can express the form $\omega'$ as a sum $\omega'_1+\omega'_2$, where $\omega'_1$ is the symplectic form on $\gl(V)\times \gl(V)$ obtained by identifying it with $T^*(\gl(V))$ and $\omega_2'$ is the symplectic form on $V\times V^*$ obtained by identifying it with $T^*(V).$

Then, it is clear that $\omega_1=\Phi^*(\omega_1').$  Next, if $i,j$ are two vectors in $V$, then we have:
\[ \omega_2'((i_1,j_1),(i_2,j_2))=j_1(i_2)-j_2(i_1)=\dfrac{1}{2}\omega_2(i,j)-\dfrac{1}{2}\omega_2(j,i)=\omega_2(i,j),\]
which shows that $\Phi$ preserves the symplectic structure.

By \cite[Theorem 1.1.4]{GG}, we know that that $M^{nil}$ is a Lagrangian subscheme of $\mm$. In particular, it is isotropic. Therefore, by \cite[Theorem 1.3.30]{CG}, we get that $\Phi(X^{nil})$ is an isotropic subscheme of $\mm$, proving that $X^{nil}$ is an isotropic subscheme of $\xx$.
\end{proof}

\begin{corollary} \label{cor:compint}
The scheme $X$ is a complete intersection of dimension $\dim(\Gg)+\dim(V)$.
\end{corollary}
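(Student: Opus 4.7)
The plan is to deduce the corollary by combining the lower bound on $\dim X$ from counting defining equations with the upper bound on $\dim X^{nil}$ already obtained in Theorem~\ref{theo:Lag}, via the observation that $X^{nil}$ is cut out of $X$ by pulling back the nilpotent cone along the $y$-projection.

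Concretely, first I would record the easy inequality $\dim X \geq \dim \xx - \dim \Gg = \dim \Gg + \dim V$: the scheme $X$ is defined in $\xx = \Gg\times\Gg\times V$ by the $\dim \Gg$ scalar equations coming from the matrix entries of $[x,y]+i^2$, so every irreducible component of $X$ has codimension at most $\dim \Gg$ by Krull's height theorem. Next, let $\nu_1,\ldots,\nu_n \in \CC[\Gg]^G$ be homogeneous generators of the invariant ring, which by classical invariant theory (Chevalley restriction for $\Gg = \spp_{2n}$) is a polynomial ring in $n = \rank \Gg = \frac{1}{2}\dim V$ variables, and whose common zero locus is the nilpotent cone $\nn \sub \Gg$. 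Then $X^{nil}$ is the scheme-theoretic preimage of $\nn$ under the projection $X \to \Gg$, $(x,y,i)\mapsto y$, so $X^{nil}$ is cut out in $X$ by the $n$ regular functions $(x,y,i)\mapsto \nu_k(y)$. Again by Krull's height theorem this gives $\dim X^{nil} \geq \dim X - n$.

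Now I would chain these with Theorem~\ref{theo:Lag}. That theorem says $\dim X^{nil} = \dim \Gg + \tfrac{1}{2}\dim V = \dim \Gg + n$. Hence
\[
\dim \Gg + n \;=\; \dim X^{nil} \;\geq\; \dim X - n \;\geq\; (\dim \Gg + 2n) - n \;=\; \dim \Gg + n,
\]
so both inequalities are equalities. This forces $\dim X = \dim \Gg + 2n = \dim \Gg + \dim V$. Since $X$ is defined by exactly $\dim \Gg$ equations in the smooth scheme $\xx$ and achieves the expected codimension $\dim \Gg$, these equations form a regular sequence and $X$ is a complete intersection, which is the corollary.

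The argument is essentially formal once Theorem~\ref{theo:Lag} is in hand; the only step where one must be careful is the passage from $\nn\subset\Gg$ to the cutting of $X^{nil}$ inside $X$. I would expect the main subtlety to be confirming that $n$ invariant functions indeed suffice to cut out the nilpotent cone (rather than needing its full defining ideal), which is precisely Kostant's theorem for a semisimple Lie algebra; for $\spp_{2n}$ this is standard and should be invoked explicitly.
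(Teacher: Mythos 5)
Your proposal is correct and follows essentially the same route as the paper: it uses Theorem~\ref{theo:Lag} to get $\dim X^{nil} = \dim\Gg + \tfrac{1}{2}\dim V$, then observes that $X^{nil}$ is cut out of $X$ by $n$ equations (giving $\dim X \leq \dim X^{nil} + n$) while $X$ is cut out of $\xx$ by $\dim\Gg$ equations (giving $\dim X \geq \dim\xx - \dim\Gg$), and chains these to conclude. The paper phrases the $n$ cutting equations as ``the nilpotence condition on $y$'' without naming them; your explicit identification of them with the $n$ generating invariants of $\CC[\Gg]^G$ (via Chevalley restriction and Kostant) makes this step more transparent but is the same idea.
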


\begin{proof}
The scheme $X^{nil}$ is obtained from $X$ by imposing exactly $n=\frac{1}{2}\dim(V)$ equations, that come from imposing the nilpotence condition on $y$. Hence, as $\dim(X^{nil})=\frac{1}{2}\dim(\xx)=\dim(\Gg)+\frac{1}{2}\dim(V)$, we must have that $\dim(X)\leq \dim(X^{nil})+\frac{1}{2}\dim(V)=\dim(\Gg)+\dim(V)$. But, the scheme $X$ is obtained from $\xx$ by imposing $\dim(\Gg)$ equations, and so, $\dim(X)\geq \dim(\xx)-\dim(\Gg)=\dim(\Gg)+\dim(V).$ Therefore, $X$ is a complete intersection of dimension $\dim(\Gg)+\dim(V).$
\end{proof}

In fact, we can generalize Theorem~\ref{theo:Lag} as follows. Let $\Hh\sub\Gg$ denote a Cartan subalgebra of $\Gg$ and let $W$ be the Weyl group. Consider the composition map $\phi:\Gg\ra\Gg/\!/G\ra\Hh/\!/W$, where the first map is the quotient map and the second one is the Chevalley restriction isomorphism. Then, we can consider the morphism:
\[\pi:X\lra \Hh/\!/W,\]
that sends a triple $(x,y,i)$ to $\phi(y)$. It is clear that
$X^{nil}=\pi^{-1}(\{0\}).$

\begin{proposition} \label{prop:general}
All the fibers of the map $\pi$ are Lagrangian subschemes of $\xx$ and have dimension $\dim(\Gg)+\frac{1}{2}\dim(V)$.
\end{proposition}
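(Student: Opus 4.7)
The plan is to carry out the argument of Theorem~\ref{theo:Lag} uniformly in the parameter $c\in\Hh/\!/W$. First, since $\Hh/\!/W\cong\A^n$ by Chevalley restriction, the fiber $\pi^{-1}(c)\sub\xx$ is cut out by $\dim(\Gg)+n$ equations in $\xx$, namely the $\dim(\Gg)$ entries of $[x,y]+i^2=0$ together with the $n$ equations $\phi(y)=c$. Hence every irreducible component of $\pi^{-1}(c)$ has dimension at least $\dim(\xx)-\dim(\Gg)-n=\dim(\Gg)+\tfrac12\dim(V)=\tfrac12\dim(\xx)$.

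For the isotropy, I would reuse the symplectic embedding $\Phi:\xx\hra\mm$ from the proof of Theorem~\ref{theo:Lag}. Fix a Cartan $\Hh'\sub\gl(V)$ containing $\Hh$, with Weyl group $W'=S_{2n}$, and write $\phi':\gl(V)\to\Hh'/\!/W'$ for the Chevalley map. Setting $\psi:M\to\Hh'/\!/W'$, $(x,y,i,j)\mapsto\phi'(y)$, the inclusion $\Gg\hra\gl(V)$ descends to a morphism $\iota:\Hh/\!/W\to\Hh'/\!/W'$ with $\psi\circ\Phi|_X=\iota\circ\pi$; thus $\Phi$ sends $\pi^{-1}(c)$ into $\psi^{-1}(\iota(c))$. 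Granting that every fiber of $\psi$ is a Lagrangian subscheme of $\mm$, the same reasoning as in Theorem~\ref{theo:Lag} (isotropy passes to closed subvarieties and pulls back along symplectic embeddings) yields isotropy of $\pi^{-1}(c)$ in $\xx$. Isotropy then forces $\dim\pi^{-1}(c)\leq\tfrac12\dim(\xx)$, which combines with the lower bound above to give the desired equality and Lagrangianness.

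The main obstacle is the Lagrangianness of general fibers of $\psi$: \cite[Theorem~1.1.4]{GG} only covers $\psi^{-1}(0)=M^{nil}$, and we need the corresponding statement for arbitrary $c'\in\Hh'/\!/W'$. I expect this to follow from a parameter-uniform rerun of the proof in \cite{GG}, since the stratification of $M$ by the Jordan type of $y$ and the ensuing dimension and irreducibility analysis depend only on the nilpotent part of $y$; shifting $y$ by a semisimple element compatible with $c'$ should not alter the conclusion. A possible fallback is a direct argument in $\xx$: the subalgebra of $\CC[\xx]$ generated by the moment map components and $\pi^*\CC[\Hh/\!/W]$ is Poisson-involutive (the latter consisting of $G$-invariant functions of $y$ alone, hence Poisson-commuting with both the moment map and itself), so the ideal of $\pi^{-1}(c)$ is Poisson-closed and $\pi^{-1}(c)$ is coisotropic; this by itself does not give Lagrangianness, but provides a consistency check for the main strategy.
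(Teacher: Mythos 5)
Your proposal matches the paper's proof in essentially every respect. The one obstacle you flag — Lagrangianness of arbitrary fibers of $\psi$ on $\mm$, rather than just $\psi^{-1}(0)=M^{nil}$ covered by \cite[Theorem~1.1.4]{GG} — is not a gap you need to fill yourself: the paper cites \cite[Corollary~2.3.4]{GG}, which is precisely the parameter-uniform statement you anticipate, and your commutative square $\psi\circ\Phi=\iota\circ\pi$ is exactly the mechanism needed to invoke it. The only other difference is cosmetic: for the dimension lower bound you count defining equations directly (Krull's principal ideal theorem applied in $\xx$), whereas the paper writes $\dim\pi^{-1}(x)\geq\dim X-\dim(\Hh/\!/W)$ using the already-established value $\dim X=\dim\Gg+\dim V$ from Corollary~\ref{cor:compint}; both give the same bound $\tfrac12\dim\xx$. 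Your Poisson-involutivity fallback is a nice sanity check, and is indeed weaker than what is needed (coisotropic rather than isotropic), as you note.
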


\begin{proof}
For any $x\in\Hh/\!/W$, we have the dimension inequality:
\[\dim(\pi^{-1}(\{x\}))\geq \dim(X)-\dim(\Hh/\!/W)=\dim(\Gg)+\dfrac{1}{2}\dim(V)=\dfrac{1}{2}\dim(\xx).\]
Therefore, to show that $\pi^{-1}(\{x\})$ is Lagrangian, it suffices to prove that it is isotropic.

We consider the symplectic embedding $\Phi:\xx\ra\mm$ defined in the proof of Theorem~\ref{theo:Lag}. By Corollary 2.3.4 of \cite{GG}, the image of $\pi^{-1}(\{x\})$ in $\mm$  lies inside a Lagrangian subscheme of $\mm$. As a result, we conclude that the image of $\pi^{-1}(\{x\})$ must be an isotropic subscheme of $\mm$, showing that $\pi^{-1}(\{x\})$ must itself be isotropic.
\end{proof}

\begin{remark}
In fact, by adapting the proof of Proposition 2.3.2 of \cite{GG}, we can prove the following: Corresponding to the Hamiltonian $G$-action on the variety $\xx$, we get a moment map $\mu:\xx\ra \Gg^*\simeq \Gg$ given by the formula $(x,y,i)\mapsto [x,y]+i^2$. Consider the map:
\[\mu\times \pi: \xx=\Gg\times\Gg\times V \lra \Gg\times \Hh/\!/W,\]
that maps a triple $(x,y,i)$ to the pair $([x,y]+i^2, \phi(y))$. Then, this map is a flat morphism. As a corollary of this fact, we also get that the moment map $\mu:\xx\ra \Gg$ if flat.
\end{remark}

With this, we are ready to prove Losev's theorem. Define the scheme:
\[X^{reg}=\{(x,y,i)\in X: y \text{ is regular, semisimple}\}.\]
In other words, $X^{reg}=\pi^{-1}(\Hh^{reg})$, where $\Hh^{reg}$ is the regular semisimple locus of $\Hh$. By Lemma 2.9 of \cite{L}, the scheme $X^{reg}$ is irreducible.

\begin{theorem}
\begin{enumerate}
\item We have $\ol{X^{reg}}=X$. In particular, the scheme $X$ is irreducible.
\item The scheme $X$ is a reduced, complete intersection of dimension $\dim(\Gg)+\dim(V)$.
\end{enumerate}
\end{theorem}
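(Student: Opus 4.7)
The plan is to deduce both statements from three ingredients already at hand: Corollary~\ref{cor:compint}, which gives that $X$ is a complete intersection of dimension $\dim(\Gg)+\dim(V)$; Proposition~\ref{prop:general}, which says every fibre of $\pi:X\to\Hh/\!/W$ has dimension $\dim(\Gg)+\tfrac{1}{2}\dim(V)$; and Lemma~2.9 of \cite{L}, guaranteeing that $X^{reg}=\pi^{-1}(\Hh^{reg})$ is irreducible.

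For part (1), I would first observe that $X^{reg}$ is a non-empty open subset of $X$, so $\ol{X^{reg}}$ is an irreducible closed subset of dimension $\dim X=\dim(\Gg)+\dim(V)$. Because $X$ is a complete intersection in the smooth ambient space $\xx$, each of its irreducible components has dimension exactly $\dim X$, so it is enough to rule out that any component lies inside $X\sm X^{reg}=\pi^{-1}(\Hh/\!/W\sm\Hh^{reg})$. By Proposition~\ref{prop:general} all fibres of $\pi$ have constant dimension $\dim(\Gg)+\tfrac{1}{2}\dim(V)$, while the discriminant locus $\Hh/\!/W\sm\Hh^{reg}$ has dimension at most $n-1$; these force $\dim(X\sm X^{reg})\le \dim X-1$, and hence $X=\ol{X^{reg}}$.

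For part (2), since $X$ is a complete intersection it is Cohen--Macaulay, hence automatically satisfies Serre's condition $S_1$. By Serre's criterion it therefore suffices to verify $R_0$: that $X$ is generically reduced. In view of (1) this amounts to exhibiting a single smooth point of $X$. I would take $(x,y,i)\in X^{reg}$ with $y$ regular semisimple lying in a Cartan $\Hh$ and with $i\in V$ chosen so that each weight-component of $i$ under $\Hh$ is non-zero. A direct computation of the differential of the moment map $\mu(x,y,i)=[x,y]+i^{2}$ then shows that the $\dot x$-variation sweeps out $\im(\ad y)=\Hh^{\perp}$, while the $\dot i$-variation, which takes the form $\dot i\mapsto 2\,i\cdot\dot i\in\Sym^{2}(V)=\Gg$, projects surjectively onto $\Gg^{y}=\Hh$ under the genericity hypothesis, since the only $h\in\Hh$ annihilating such an $i$ is $h=0$. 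Hence $d\mu$ is surjective at $(x,y,i)$, which is therefore a smooth, and in particular reduced, point of $X$.

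The main obstacle I anticipate is the last computation: one must carefully unwind the identification $\Sym^{2}(V)\simeq\spp(V)$ and verify that the map $V\to\Hh$, $\dot i\mapsto \pi_{\Hh}(i\cdot\dot i)$, is surjective for generic $i$. Everything else is a formal consequence of the fibre-dimension count of Proposition~\ref{prop:general}, the complete-intersection statement of Corollary~\ref{cor:compint}, and the standard application of Serre's criterion to Cohen--Macaulay schemes.
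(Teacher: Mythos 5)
Your part (1) is essentially the paper's argument, with the same dimension count: $\dim(\pi^{-1}(\Delta))\le \dim(\Delta)+\bigl(\dim(\Gg)+\tfrac12\dim(V)\bigr)=\dim(X)-1$ by Proposition~\ref{prop:general}, so no component of the complete intersection $X$ can hide over the discriminant.

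For part (2) you take a genuinely different route: the paper argues abstractly that $G$ acts generically freely on $X^{reg}$ and that a moment map is submersive at points with trivial stabilizer, hence $X$ is generically reduced; you instead try to exhibit an explicit smooth point and verify surjectivity of $d\mu$ by hand. The structure of your computation is sound (with $y$ regular semisimple the $\dot x$-direction gives $\operatorname{im}(\ad y)=\Hh^{\perp}$, and the pairing $\langle 2i\cdot\dot i, h\rangle=\omega(hi,\dot i)$ shows the $\dot i$-direction projects onto $\Hh$ exactly when the $\Hh$-annihilator of $i$ is trivial). However, there is a genuine gap in your choice of point: the genericity condition ``each weight-component of $i$ under $\Hh$ is non-zero'' is \emph{incompatible} with $(x,y,i)\in X$. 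Indeed, writing $i=\sum_j(i_{\eps_j}+i_{-\eps_j})$ in the weight decomposition of $V$, the $\Hh$-component of $i^2\in\Sym^2(V)\simeq\Gg$ is $\sum_j c_j\, i_{\eps_j}i_{-\eps_j}$ in the coordinate directions of $\Hh$; for $[x,y]=-i^2$ to be solvable one needs $i^2\in\operatorname{im}(\ad y)=\Hh^{\perp}$, which forces $i_{\eps_j}i_{-\eps_j}=0$ for every $j$. So if all weight-components are non-zero, no such $x$ exists and the point is not on $X$. The fix is to weaken the condition to what you actually use, namely that the $\Hh$-annihilator of $i$ is trivial — equivalently, for each $j$ at least one of $i_{\eps_j},i_{-\eps_j}$ is non-zero — and to choose \emph{exactly} one of each pair non-zero so that $i^2\in\Hh^{\perp}$ and $x$ exists. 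With that correction your explicit argument recovers generic reducedness, and the remaining Cohen--Macaulay step via Serre's criterion matches the paper. The paper's formulation (freeness of the $G$-action plus the standard fact about moment maps) sidesteps these coordinate subtleties, which is precisely where the error crept in.
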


\begin{proof}
By Corollary~\ref{cor:compint}, we already know that $X$ is a complete intersection of $\dim(\Gg)+\dim(V)$.
Consider the big diagonal $\Delta=(\Hh\sm\Hh^{reg})/\!/W$, which is a closed subscheme of $\Hh/\!/W$ of codimension $1$. Then, we have the equality:
\[X=\ol{X^{reg}}\cup \pi^{-1}(\Delta).\]
Since $\dim(\Delta)=\dim(\Hh/\!/W)-1=\frac{1}{2}\dim(V)-1$, by Proposition~\ref{prop:general}, we have $\dim(\pi^{-1}(\Delta))\leq \dim(\Gg)+\dim(V)-1$. However, as $X$ is a complete intersection, any irreducible component must have dimension exactly $\dim(\Gg)+\dim(V)$. Therefore, we conclude that $\ol{X^{reg}}$ must be the only irreducible component, proving that $X=\ol{X^{reg}}$.

Finally, we note that the action of the group $G$ is generically free on $X^{reg}$. It is known that the moment map is a submersion at the points with a free $G$-orbit, and this implies that $X$ is generically reduced.  Therefore, as $X$ is a complete intersection, it must be Cohen-Macaulay, and thus, we conclude that $X$ is reduced (see, for example, \cite[Theorem 2.2.11]{CG}).
\end{proof}

\subsection{Irreducible components of $X^{nil}$} \label{sec:irredcomp}

Let $P_n$ denote the set of all partitions of $2n$ where every odd part occurs an even number of times. Let $\pp_n\sub P_n$ be the subset of those partitions where each part is even. For any $\lambda\in P_n$, let $\nn_{\lambda}$ denote the corresponding nilpotent conjugacy class of $\Gg$ and define:
\[X_{\lambda}:=\{(x,y,i)\in X^{nil}: y\in \nn_{\lambda}\}.\]
Then, it is clear that we have the following disjoint union:
\[X^{nil}=\coprod_{\lambda\in P_n} X_{\lambda}.\]

\begin{theorem}
\begin{enumerate}
\item For each $\lambda\in \pp_n,$ we have $\dim(X_\lambda)=\dim(\Gg)+\frac{1}{2}\dim(V)$.
\item For each $\lambda\in P_n\sm\pp_n$, we have $\dim(X_\lambda)<\dim(\Gg)+\frac{1}{2}\dim(V)$.
\end{enumerate}
\end{theorem}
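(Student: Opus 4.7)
The plan is to fibre $X_\lambda$ over the orbit $\nn_\lambda$ via $(x,y,i)\mapsto y$ and analyse the fibres. For fixed $y\in\nn_\lambda$, the fibre is $\{(x,i)\in\Gg\times V:[x,y]+i^2=0\}$, which further projects onto the $i$-coordinate with image $S_y:=\{i\in V:i^2\in\mathrm{im}(\ad_y)\}$ and affine $Z_\Gg(y)$-torsor fibres. Using $\dim\nn_\lambda+\dim Z_\Gg(y)=\dim\Gg$ this yields
\[\dim X_\lambda \;=\; \dim\Gg + \dim S_y.\]
Theorem~\ref{theo:Lag} gives $\dim X^{nil}=\dim\Gg+n$ where $n=\frac{1}{2}\dim V$, so $\dim S_y\leq n$ for every $\lambda$. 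Both parts of the theorem thus reduce to showing that $\dim S_y=n$ if and only if $\lambda\in\pp_n$.

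Extend $y$ to a Jacobson--Morozov triple $(y,h,f)$ in $\Gg$ and let $V=\bigoplus_k V_k$ be the $h$-weight decomposition. Set $V^+:=\bigoplus_{k>0}V_k$. A weight count gives $\dim V^+=\sum_\mu\lfloor\mu/2\rfloor\cdot m_\mu(\lambda)$, where $m_\mu(\lambda)$ denotes the multiplicity of the part $\mu$ in $\lambda$; this equals $n$ exactly when every part of $\lambda$ is even, and for $\lambda\in P_n\setminus\pp_n$ one has $n-\dim V^+=\frac{1}{2}\sum_{\mu\text{ odd}}m_\mu(\lambda)>0$. I claim $V^+\subset S_y$ for every $\lambda$: for $i\in V^+$, $i^2\in \Sym^2 V=\Gg$ has $h$-weights $\geq 2$, whereas the Jacobson--Morozov complement $\Gg^f=\ker(\ad_f)$ to $\mathrm{im}(\ad_y)$ consists of the lowest-weight vectors of the $\sll_2$-summands of $\Gg$ and hence lives in $h$-weights $\leq 0$. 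Thus $i^2\in\mathrm{im}(\ad_y)$. When $\lambda\in\pp_n$, the subspace $V^+$ is a Lagrangian in $V$ (isotropy is forced by the fact that $\omega$ has $h$-weight $0$), so the Lagrangian upper bound forces $\dim S_y=n$, establishing (1).

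For (2), it suffices to prove the reverse inclusion $S_y\subset V^+$. Suppose $i\in S_y$ has weight decomposition $i=\sum_k i_k$ with minimal nonzero weight $k_0\leq 0$. Then the only contribution to the weight-$2k_0$ part of $i^2$ is $(i_{k_0})^2$, and the key claim is that for any nonzero $v\in V_{k_0}$ the element $v^2$ has a nonzero component in $\Gg^f$, contradicting $i\in S_y$. Via the perfect pairing $\Gg^f\times Z_\Gg(y)\to\CC$ induced by the trace form, this is equivalent to exhibiting $z\in Z_\Gg(y)$ of $h$-weight $-2k_0$ with $\omega(zv,v)\neq 0$.

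The main obstacle is the construction of such $z$. Decompose $V=\bigoplus_\mu V(\mu-1)\otimes W_\mu$ into $\sll_2$-isotypic components, so that $\omega=\bigoplus B_\mu\otimes g_\mu$, where $B_\mu$ and $g_\mu$ are of opposite symmetry type determined by the parity of $\mu$ (with $W_\mu$ symplectic for $\mu$ odd, forcing $m_\mu(\lambda)$ to be even). The centralizer $Z_\Gg(y)$ contains operators of the form $y^\ell\otimes g$ subject to a parity compatibility between $\ell$ and $g$ imposed by the symplectic condition on $V$; these suffice to separate nonzero weight-$k_0$ vectors via the pairing $\omega(zv,v)$, because the auxiliary form $B(u,u'):=\omega(y^{-k_0}u,u')$ on $V_{k_0}$ is nondegenerate and is symmetric or antisymmetric according to the parity of $k_0$. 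A short case analysis on the parities of $k_0$ and $\mu$ then produces the required $z$. This establishes $S_y=V^+$, and combined with $\dim V^+<n$ for $\lambda\in P_n\setminus\pp_n$ completes part (2).
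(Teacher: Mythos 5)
Your overall strategy is the same as the paper's: fibre $X_\lambda$ over the nilpotent orbit $\nn_\lambda$, reduce to $\dim X_\lambda=\dim\Gg+\dim S_y$, and use Jacobson--Morozov plus $\sll_2$-weight theory. The weight count $\dim V^+=\sum_\mu\lfloor\mu/2\rfloor m_\mu(\lambda)$, the easy inclusion $V^+\subseteq S_y$, and the Lagrangian bound for part~(1) are all fine. But the entire content of part~(2) rests on the inclusion $S_y\subseteq V^+$, equivalently on the claim that for nonzero $v\in V_{k_0}$ with $k_0\leq 0$ one has $v^2\notin\mathrm{im}(\ad_y)$ --- and that is exactly the paper's Lemma~\ref{lem:sl2}, which is the only hard step in the whole proof. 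Your proposal does not prove it: the statement ``a short case analysis on the parities of $k_0$ and $\mu$ then produces the required $z$'' is the gap.

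Concretely, your plan needs to produce, for every nonzero $v\in V_{k_0}$, an element $z$ in the graded piece $Z_\Gg(y)_{-2k_0}$ of the \emph{symplectic} centralizer with $\omega(zv,v)\neq 0$. That requires a description of $Z_\Gg(y)_{-2k_0}$ inside $\spp(V)$ (not $\gl(V)$), taking into account the parity constraints between the power of $y$ and the (anti)symmetry of the multiplicity-space maps, and then verifying that the resulting family of forms $\omega(z\cdot,\cdot)$ on $V_{k_0}$ has no common isotropic vector across the different isotypic blocks. None of this is carried out, and it is not obviously short: $v$ may have components in several isotypic pieces, and the symplectic conditions on $z$ genuinely cut down the available operators. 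By contrast the paper's Lemma~\ref{lem:sl2} avoids the symplectic centralizer entirely --- it passes to $V\otimes V$ over $\gl$, reduces to irreducible $V$, reduces further to $v$ a single weight vector $x_k$, and then kills $x_k\otimes x_k$ with the explicit functional $x_i\otimes x_j\mapsto(-1)^i$, which vanishes on $e\cdot(V'\otimes V')$. That functional is not of the form $\langle\cdot,z\rangle$ for $z\in Z_\Gg(y)$; the paper's route is genuinely simpler than the one you are attempting. To complete your proof you would either need to carry out the $Z_\Gg(y)$ case analysis in full (and check it in the presence of mixed isotypic components), or replace that step with an argument along the lines of Lemma~\ref{lem:sl2}.
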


As $X^{nil}$ has been shown to be a complete intersection of dimension $\dim(\Gg)+\frac{1}{2}\dim(V)$, this theorem implies that the irreducible components of $X^{nil}$ are given exactly by the closures of those $X_{\lambda}$ for which $\lambda\in\pp_n$, completing the proof of Theorem~\ref{theo:subsch}.

\begin{proof}
Let $y$ be any nilpotent element in $\Gg$. Let $\OO$ denote the minimal orbit consisting of rank one elements in $\Gg$. Define the reduced schemes: (Also defined in \cite{L})
\[X_{y}=\{(x,i)\in \Gg\times V:[x,y]+i^2=0\},\]
\[\underline{X_{y}}=\{(x,z)\in \Gg\times \ol{\OO}:[x,y]+z=0\},\]
\[Y_{y}=\ol{\OO}\cap\{[x,y]: x\in \Gg\}.\]
We have maps:
\[\rho_1:X_{y}\lra\underline{X_{y}}\]
\[(x,i)\mapsto (x,i^2),\]
and
\[\rho_2:\underline{X_{y}}\lra Y_{y}\]
\[(x,z) \mapsto z.\]
The map $\rho_1$ is finite, with either one or two points in the fiber at any point depending on whether $i^2$ is zero or not, respectively. The map $\rho_2$ is an affine bundle map which has fibers of dimension equal to $\dim(\mathfrak{z}_{\Gg}(y)).$ Here, $\mathfrak{z}_{\Gg}(y)$ is the centralizer of $y$ in $\Gg$.  Thus, we get that $\dim(X_{y})  = \dim(\mathfrak{z}_{\Gg}(y))+\dim(Y_{y})$. Further, if $y\in\nn_{\lambda}$ for some $\lambda\in P_n$, we have that $\dim(X_{\lambda})=\dim(X_{y})+\dim(\nn_\lambda) = \dim(\Gg)+\dim(Y_y)$. So, in order to prove the theorem, we are required to show that $\dim(Y_y)=\frac{1}{2}\dim(V)$ for $\lambda\in\pp_n$ and $\dim(Y_y)<\frac{1}{2}\dim(V)$ for $\lambda\not\in\pp_n$.

For a fixed nilpotent $y$, by Jacobson-Morozov theorem, we can find an $\sll_2$-triple $(e,f,h)$ in $\Gg$ with $e=y$. Identifying the subspace $\langle e,f,h\rangle\sub\Gg$ with $\sll_2$, we get an $\sll_2$-action on the vector space $V$. By $\sll_2$-theory, the element $h$ acts semisimply on $V$ with integer eigenvalues. Let $V=V_-\oplus V_0\oplus V_+$ be a decomposition of $V$, such that $V_-, V_0$ and  $V_+$ denote the spans of negative, zero and positive eigenspaces of $h$ respectively.

Now, we consider the identification of $\sll_2$-representations $\Gg=\spp(V)=\Sym^2(V)$. By Lemma~\ref{lem:sl2} proved below, for any $v\in V$, we have:
\[v\in V_+ \iff v^2=\ad_e(x)=\ad_y(x)=[y,x] \text{ for some } x\in\Gg.\]
Hence, we conclude that $\dim(Y_y)=\dim(V_+)$. As there is a one-to-one correspondence between positive and negative eigenvectors of $h$, we have that $\dim(V_-)=\dim(V_+)$, and so, $\dim(Y_y)\leq \frac{1}{2}\dim(V)$. The equality holds exactly when $V_0=0$, that is, when $h$ has no zero eigenvalues. Zero eigenvalues for $h$ occur only in irreducible representations having odd dimension. Hence, the dimension inequality becomes an equality exactly when the space $V$ decomposes into a sum of irreducibles each having even dimension. However, irreducible components of $V$ correspond exactly to the Jordan blocks of $e (=y)$. Therefore, the dimension equality holds exactly when each Jordan block of $y$ has even size, that is $y\in\nn_{\lambda}$ for some $\lambda\in\pp_n$, thus completing the proof.
\end{proof}

\begin{lemma} \label{lem:sl2}
Let $V$ be a representation of $\sll_2(\CC)=\langle e,f,h\rangle$. Let $W=\Sym^2(V)$ with the action of $\sll_2$ induced from the one on $V$. Then, for any $v\in V$, we have that $v^2=e\cdot w$ for some $w\in W$ if and only if $v$ lies in the span of the positive eigenspaces of $h$ in $V$.
\end{lemma}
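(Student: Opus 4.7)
The forward direction is a direct weight computation: if $v=\sum_k v_k$ with every $v_k$ of positive weight, then every product $v_{k_1}v_{k_2}$ appearing in $v^2$ has strictly positive weight. Decomposing $W$ into $\sll_2$-irreducibles $V_M$, each summand splits as $V_M = V_M[-M]\oplus \im(e|_{V_M})$ with the one-dimensional lowest weight space $V_M[-M]$ sitting at weight $-M\le 0$; hence every strictly positive weight vector of $W$ lies in $\im(e|_W)$, and in particular so does $v^2$.

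For the converse I would argue by contrapositive, first reducing to the case when $V$ is irreducible. Decomposing $V=\bigoplus_i V^{(i)}$ into irreducible $\sll_2$-submodules yields the compatible splitting $W = \bigoplus_i \Sym^2 V^{(i)} \,\oplus\, \bigoplus_{i<j} V^{(i)}\otimes V^{(j)}$, and $\im(e|_W)$ is the direct sum of $\im(e)$ on each $\sll_2$-summand. Writing $v=\sum_i v^{(i)}$ and expanding $v^2 = \sum_i (v^{(i)})^2 + 2\sum_{i<j} v^{(i)}v^{(j)}$, if $v\notin V_+ = \bigoplus_i (V^{(i)})_+$ then some $v^{(i_0)}\notin (V^{(i_0)})_+$, and by the irreducible case it would suffice to show $(v^{(i_0)})^2\notin\im(e|_{\Sym^2 V^{(i_0)}})$.

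It remains to handle $V=V_m$ irreducible with $v\notin (V_m)_+$. Let $k_0\le 0$ be the smallest weight with $v_{k_0}\neq 0$; the weight-$2k_0$ piece of $v^2$ is exactly $v_{k_0}^2$ (since $k_1+k_2=2k_0$ with $k_i\ge k_0$ forces $k_1=k_2=k_0$), a nonzero scalar multiple of $u_{k_0}^2$ for a weight basis vector $u_{k_0}$. The Clebsch--Gordan decomposition $\Sym^2(V_m)=\bigoplus_{j\ge 0} V_{2m-4j}$ contains the summand $V_{-2k_0}$ for $j=(m+k_0)/2$, whose lowest weight vector sits at weight $2k_0$, and the main step is to verify that the projection of $u_{k_0}^2$ onto this one-dimensional line is nonzero. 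I would carry this out explicitly in the polynomial realization $V_m=\Sym^m(\CC^2)$: writing $u_{k_0}$ as the monomial $x^{(m+k_0)/2}y^{(m-k_0)/2}$, the projection is computed via an iterated transvectant, and a short binomial-sum identity shows it equals a nonzero scalar multiple of the lowest weight vector $y^{-2k_0}$ of $V_{-2k_0}$. This transvectant computation is the only non-trivial technical step in the argument.
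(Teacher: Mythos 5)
Your proposal is correct, and the final step takes a genuinely different route from the paper's. Both proofs dispatch the forward direction by a weight count and reduce to an irreducible $V=V_m$ (the paper reduces inside $V\otimes V$; you stay in $\Sym^2(V)$ via $\Sym^2 V = \bigoplus_i\Sym^2 V^{(i)}\oplus\bigoplus_{i<j}V^{(i)}\otimes V^{(j)}$), and both then pass to the lowest-weight component $u_{k_0}$ of $v$ since $\im(e)$ is weight-graded. After that they diverge. The paper works in $V\otimes V$ with the weight basis $x_0,\dots,x_n$, $e\cdot x_i=x_{i+1}$, and introduces the ad hoc linear functional $\lambda(x_i\otimes x_j)=(-1)^i$: since $e\cdot(x_i\otimes x_j)=x_{i+1}\otimes x_j+x_i\otimes x_{j+1}$, $\lambda$ kills $e\cdot(V'\otimes V')$ for $V'$ the span of all but the top weight vector, yet $\lambda(x_k\otimes x_k)=(-1)^k\neq 0$ — a quick contradiction with no plethysm needed. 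You instead invoke $\Sym^2(V_m)=\bigoplus_j V_{2m-4j}$ and reduce everything to showing $u_{k_0}^2$ has nonzero component along the lowest-weight line of the summand $V_{-2k_0}$. The paper's trick is shorter; yours is more structural and identifies exactly which Clebsch--Gordan summand obstructs membership in $\im(e)$. As for the technical step you defer: it is even simpler than the "binomial-sum identity" you anticipate. Writing $a=(m+k_0)/2$, $b=(m-k_0)/2$ and $r=m+k_0=2a$, the support constraints on the partial derivatives in the Cayley $\Omega$-process expansion of $(x^ay^b,\,x^ay^b)_r$ force every term to vanish except $i=a$, giving $(-1)^a\binom{2a}{a}\bigl(a!\,b!/(b-a)!\bigr)^2\,y^{-2k_0}$, which is visibly nonzero. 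So your plan goes through as intended.
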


\begin{proof}
Suppose $v$ lies in the span of the positive eigenspaces of $h$ in $V$. Then, we must have that $v^2$ lies in the span of the positive eigenspaces of $h$ in $W$, and thus, by $\sll_2$-theory, we must have $v^2=e\cdot w$ for some $w\in W$.

Now, we prove the converse. Henceforth, instead of working with $\Sym^2(V)$, we will work with $W=V\otimes V$, for notational convenience. As $V\otimes V$ contains $\Sym^2(V)$ as a proper subrepresentation, if $v^2=e\cdot w$ for some $w\in\Sym^2(V)$, we must have that $v\otimes v=e\cdot \tilde w$ for some $\tilde w\in V\otimes V$.

Let $V=\oplus_i V_i$ be {a} decomposition of $V$ into a direct sum of irreducible $\sll_2$-representations. Then, we get a corresponding decomposition $v=\sum_i v_i$ such that $v_i\in V_i$ for all $i$. Then, as $v\otimes v=e\cdot w$, it is clear that there exist $w_i\in V_i\otimes V_i$ such that $v_i\otimes v_i=e\cdot w_i$ for each $i$. Therefore, without loss of generality, we can assume that $V$ is irreducible.

Suppose $\dim(V)=n+1$ for some $n\in \ZZ_{\geq 0}$. Then, there exists an $h$-eigenbasis $\{x_0,x_1,\dots,x_n\}$ of $V$ such that the actions of $e$ and $f$ are given by:
\[e\cdot x_i=x_{i+1}, f\cdot x_j=(jn-j^2+j)x_{j-1},\]
for all $0\leq i\leq n-1$ and $1\leq j\leq n$. (See, for example, \cite[Section 7.2]{Hum})

Write $v=\sum_i c_ix_i$ for $c_i\in \CC$. Then, we have $v\otimes v=\sum_{i,j}c_ic_jx_i\otimes x_j$.  Pick the smallest $k$ such that $c_k\neq 0$. Then, $c_k^2x_k\otimes x_k$ is the summand in $v\otimes v$ having the strictly smallest eigenvalue for the $h$-action. Therefore, as we have that $v\otimes v=e\cdot w$ for some $w\in V\otimes V$, there must exist $w'\in V\otimes V$ such that $x_k\otimes x_k = e\cdot w'$. Hence, without loss of generality, we can assume that $v=x_k$ for some $k$.

So, we are reduced to showing that given $x_k\otimes x_k=e\cdot w$ for some $w\in V\otimes V$, we must have that $h$ acts on $x_k$ with a positive eigenvalue. For the sake of a contradiction, suppose $h$ acts on $x_k$ with a non-positive eigenvalue. Therefore, we must have that $h$ acts on $x_k\otimes x_k$ with a non-positive eigenvalue. As $x_k\otimes x_k=e\cdot w$, we must have that $h$ acts on $w$ with a strictly negative eigenvalue. Let $V'\sub V$ be the subspace spanned by $\{x_0,x_1,\dots,x_{n-1}\}$ (that is, all but the highest weight vector). Then, we must have that $w\in V'\otimes V'\sub V\otimes V$.

Consider the linear function defined via:
\[\lambda:V\otimes V\lra \CC\]
\[x_i\otimes x_j\mapsto (-1)^i.\]
We claim that $\lambda(e\cdot x)=0$ for all $x\in V'\otimes V'$. To see this, we note that $V'\otimes V'$ is spanned by vectors for the form $x_i\otimes x_j$ for $0\leq i,j\leq n-1$. For such $i$ and $j$, we have:
\[e\cdot(x_i\otimes x_j)=x_{i+1}\otimes x_j+x_i\otimes x_{j+i},\]
which makes it clear that $\lambda(e\cdot(x_i\otimes x_j))=0$.

In particular, since $w\in V'\otimes V'$, we must have $\lambda(e\cdot w)=0$. This implies that $\lambda(x_k\otimes x_k)=0$. However, it follows from the definition of $\lambda$ that $\lambda(x_k\otimes x_k)=(-1)^k\neq 0$. This gives a contradiction, and so, the eigenvalue corresponding to $x_k$ must have been positive, completing the proof.
\end{proof}

\section{Hamiltonian reduction}

\subsection{Classical setting} \label{sec:class}
\subsubsection{}In this section, we prove the isomorphism $X/\!/\{\pm 1\}\simeq A$ (see Theorem~\ref{theo:mainclass}). We will first define some notation to formulate the precise statement. All tensor products will be over $\CC$, unless specified otherwise.

Let $\omega$ denote the symplectic form on the vector space $V$. Owing to the natural action of $G$ on the vector space $V$ that preserves $\omega$, we get a moment map:
\[\mu_1:V\lra \Gg^*,\]
that maps the element $v\in V$ to $v^2\in \Gg\simeq \Gg^*$. Here, we use the identification $\Sym^2(V)=\Gg$. We can dualize this map to get a co-moment map:
\[\theta_1:\Gg\lra \CC[V].\]
By \cite[Proposition 1.4.6]{CG}, we have the following formula for this co-moment map: Any element $x\in \Gg$ maps under $\theta_1$ to the polynomial function on $V$ given by:
\[v\mapsto \dfrac{1}{2}\omega(x\cdot v, v)\]
for all $v\in V$. In particular, the image of $\theta_1$ in $\CC[V]$ is exactly the vector space of polynomial functions on $V$ having degree $2$. We can extend the above map multiplicatively to get a map $\CC[\Gg]\simeq\Sym(\Gg)\ra \CC[V]$, which we also call $\theta_1$, whose image is exactly the subalgebra $\CC[V]_{even}$ of polynomials that have even total degree. Define $K:=\ker(\theta_1)\sub \CC[\Gg]$. We give a more explicit description of this ideal $K$ below.

\begin{lemma} \label{lem:rank1}
Let $\ol{\OO}$ denote the closure of the orbit $\OO$ of rank one matrices in $\Gg$, such that the scheme structure on $\ol{\OO}$ is given by the reduced structure on it. Then, the radical ideal in $\CC[\Gg]$ that defines the scheme $\ol{\OO}$ is generated by the $2\times 2$ minors.
\end{lemma}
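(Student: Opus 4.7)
The plan is to pass to symmetric matrices via the symplectic form and then invoke the classical determinantal result for the space of symmetric matrices.

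First, I would fix a symplectic basis of $V$ in which $\omega$ has matrix $J$, so that $\Gg=\mathfrak{sp}(V)$ consists of those $2n\times 2n$ matrices $A$ satisfying $A^T J + J A = 0$; equivalently, using $J^T=-J$, those $A$ for which $B:=JA$ is symmetric. Since $J$ is invertible, the assignment $A\mapsto JA$ is a rank-preserving linear isomorphism from $\Gg$ onto the space of symmetric $2n\times 2n$ matrices, under which $\ol{\OO}$ corresponds to the closure of the locus of rank-$\leq 1$ symmetric matrices. This closure is precisely the affine cone over the second Veronese embedding of $\PP(V)$ inside $\PP(\Sym^2 V)$.

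Next, I would verify that the $2\times 2$ minors transform predictably across this identification. Using $B_{ij}=\sum_a J_{ia}A_{aj}$, the minor $B_{ij}B_{kl}-B_{il}B_{kj}$ expands as a $\CC$-linear combination of the $2\times 2$ minors of $A$, and by invertibility of $J$ the converse also holds. Consequently the ideal of $\CC[\Gg]$ generated by the $2\times 2$ minors of $A$ coincides, under pullback along the above isomorphism, with the ideal of $\CC[\Sym^2 V]$ generated by the $2\times 2$ minors of the generic symmetric matrix.

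Finally, I would invoke the classical fact (see, e.g., Bruns--Vetter, \emph{Determinantal Rings}) that the ideal generated by the $2\times 2$ minors of the generic symmetric matrix is prime and cuts out the affine cone over the Veronese as a reduced scheme. Combining this with the previous paragraph then yields the lemma. The only nontrivial input is this classical determinantal identity for symmetric matrices; everything else is bookkeeping with the symplectic form, so I expect the main (small) obstacle to be spelling out the minor-by-minor correspondence carefully enough that the equality of ideals (and not merely of their zero loci) is transparent.
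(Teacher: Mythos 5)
Your proposal is correct and, at its core, is the same argument as the paper's: both reduce the claim to the classical fact that the second Veronese cone is cut out scheme-theoretically by the $2\times 2$ minors of the generic symmetric matrix. The paper phrases this via the parametrization $\mu_1\colon V\to\ol{\OO}$, $v\mapsto v^2$, showing $\ol{\mu_1}\colon V/\!/\{\pm1\}\xra{\sim}\ol{\OO}$ and then invoking Weyl's second fundamental theorem for $O(1)=\{\pm1\}$ to identify the relations among the $p_ip_j$; you instead identify $\Gg$ with symmetric matrices via $A\mapsto JA$, check that the ideal of $2\times2$ minors is preserved (Cauchy--Binet), and cite the determinantal-ring primality theorem. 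These are two standard references for the same statement, so the routes are equivalent; your version has the minor advantage of getting primality of the ideal (not just its radicality) for free, while the paper's version makes the $V/\!/\{\pm1\}$ description of $\ol{\OO}$ explicit, which it reuses in Corollary~\ref{cor:classinv}.
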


\begin{proof}
Consider the map:
\[\mu_1: V \lra \Gg^*\simeq \Gg\]
\[ v\mapsto v^2.\]
The image of $\mu_1$ is exactly $\ol{\OO}$. Also, the pre-image of any point in $\OO$ consists of exactly two vectors in $V$ that are negatives of each other, whereas the pre-image of zero is the zero vector. Therefore, we get an induced map from the categorical quotient:
\[\ol{\mu_1}:V/\!/\{\pm 1\} \lra \ol{\OO}.\]
By the above discussion, $\ol{\mu_1}$ is a closed embedding that is a bijection on $\CC$-points. Therefore, as the scheme $\ol{\OO}$ is reduced, the map $\ol{\mu_1}$ is an isomorphism.

Hence, the coordinate ring $\CC[\ol{\OO}]$ of $\ol{\OO}$ is isomorphic to the invariant ring $\CC[V]^{\{\pm1\}}$. Choosing coordinates $p_1, p_2, \dots, p_{2n}$ in $V$, this invariant ring is equal to $\CC[p_1,p_2,\dots,p_{2n}]^{\{\pm 1\}}$. It is clear that this ring is generated by the polynomials $q_{ij}=p_ip_j$ for $1\leq i,j \leq 2n$. Also, by the second fundamental theorem (\cite[Theorem 2.17A]{W}) for the group $O(1)=\{\pm1\}$, the relations between these generators are exactly given by $R_{ijkl}=q_{ij}q_{kl}-q_{il}q_{kj}$ for $1\leq i,j,k,l \leq 2n$. Since the pullbacks of these relations $R_{ijkl}$'s to the coordinate ring of $\Gg$ are exactly given by the $2\times 2$ minors, this shows that the defining ideal is exactly generated by these elements.
\end{proof}

\begin{corollary} \label{cor:classinv}
The map $\theta_1$ induces an isomorphism of algebras:
\[\theta_1:\CC[\Gg]/K\lra \CC[V]^{\{\pm 1\}}=\CC[V]_{even}.\]
The ideal $K$ is the defining ideal of $\ol{\OO}$ in $\CC[\Gg]$ and is generated by the $2\times 2$ minors.
\end{corollary}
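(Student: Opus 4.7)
The statement splits into two assertions: the algebra isomorphism $\CC[\Gg]/K\simeq\CC[V]^{\{\pm 1\}}=\CC[V]_{even}$, and the identification of $K$ as the defining ideal of $\ol{\OO}$, generated by the $2\times 2$ minors. Both should follow quickly from Lemma~\ref{lem:rank1} together with the discussion immediately preceding the statement.

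For the equality $\CC[V]^{\{\pm 1\}}=\CC[V]_{even}$, I would simply note that $\{\pm 1\}$ acts on $V$ by $v\mapsto -v$, so a homogeneous polynomial of degree $d$ is invariant precisely when $d$ is even. The preceding discussion already established that the image of $\theta_1:\CC[\Gg]\to\CC[V]$ is exactly $\CC[V]_{even}$ (since $\theta_1$ sends $\Gg\sub\CC[\Gg]$ onto the degree-two polynomials, and multiplicative extension then produces every even-degree polynomial). Since $K=\ker(\theta_1)$ by definition, the first isomorphism theorem immediately yields the desired algebra isomorphism $\CC[\Gg]/K\xrightarrow{\sim}\CC[V]_{even}$.

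For the identification of $K$, the key observation is that the multiplicative extension $\theta_1:\CC[\Gg]=\Sym(\Gg)\to\CC[V]$ coincides with the pullback $\mu_1^{*}$ along the moment map $\mu_1:V\to\Gg^*\simeq\Gg,\ v\mapsto v^2$. Indeed, for $x\in\Gg$ regarded as a linear function on $\Gg^{*}$, one has $\mu_1^{*}(x)(v)=\langle x,v^2\rangle=\tfrac{1}{2}\omega(x\cdot v,v)$ under the identification $\Sym^2(V)\simeq\Gg$, which is exactly $\theta_1(x)(v)$. Hence $K=\ker(\mu_1^{*})$ is the scheme-theoretic defining ideal of the image of $\mu_1$ in $\Gg$. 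But Lemma~\ref{lem:rank1} shows that $\mu_1$ factors as an isomorphism $V/\!/\{\pm 1\}\xrightarrow{\sim}\ol{\OO}$ of reduced schemes, so this image is the reduced subscheme $\ol{\OO}$ and $K$ is its defining ideal. Finally, Lemma~\ref{lem:rank1} itself identifies this ideal as being generated by the $2\times 2$ minors.

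I do not expect any serious obstacle here: the only point requiring a brief verification is that the multiplicative extension of the co-moment map to $\CC[\Gg]$ coincides with $\mu_1^{*}$, which is standard but worth writing explicitly. Everything else is assembled directly from Lemma~\ref{lem:rank1} and the paragraph introducing $\theta_1$.
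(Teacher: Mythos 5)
Your proof is correct and follows exactly the route the paper intends: the corollary has no separate proof in the paper because it is meant to be assembled from the preceding paragraph (which fixes $K=\ker(\theta_1)$ and identifies $\im(\theta_1)=\CC[V]_{even}$) together with Lemma~\ref{lem:rank1}. Your extra step---checking that the multiplicative extension of $\theta_1$ agrees with $\mu_1^{*}$, so that $K=\ker(\mu_1^*)$ is precisely the ideal of $\ol{\OO}$---is the right thing to make explicit, and it is consistent with how Lemma~\ref{lem:rank1} is used.
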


\subsubsection{} 
The adjoint action of the group $G=Sp(V)$ on the Lie algebra $\Gg$ gives rise to a Hamiltonian $G$-action on the symplectic variety $T^*\Gg$. Corresponding to this action, we get a moment map:
\[\mu_0: T^*\Gg\lra \Gg^*.\]
If we identify the space $\Gg^*$ with $\Gg$ and $T^*\Gg$ with $\Gg\times \Gg$, the map $\mu_0$ is given explicitly by the commutator map on $\Gg$. We can dualize $\mu_0$ to get a co-moment map $\theta_0:\Sym(\Gg) \ra \CC[T^*\Gg]$. 

Also, we have a diagonal $G$-action on the space $T^*\Gg\times V$. The moment map $\mu_2:T^*\Gg\times V\ra \Gg^*$ for this action is equal to $\mu_0+\mu_1$, whereas the co-moment map $\theta_2:\Sym(\Gg)\ra \CC[T^*\Gg\times V]\simeq \CC[T^*\Gg]\otimes \CC[V]$ is defined via $\theta_2(x)=\theta_0(x)\otimes 1+1\otimes\theta_1(x)$ for all $x\in\Gg$. Explicitly, the map $\mu_2$ is defined via the formula $(x,y,i)\mapsto [x,y]+i^2$.

Now, we can define the schemes that we'll be dealing with:
\begin{itemize}
\item We define the scheme $X \sub T^*\Gg\times V$ as the zero fiber of the moment map $\mu_2$. More precisely, the defining ideal $I$ of $X$ in $\CC[T^*\Gg\times V]$ is the one generated by $\theta_2(\Gg)$ in $\CC[T^*\Gg\times V]$. Using the formula for $\theta_2$ above, this ideal is equal to the one generated by the matrix entries of the expression $[x,y]+i^2$.

\item We define the scheme $A \sub T^*\Gg$ as the pre-image of $\ol{\OO}$ under the moment map $\mu_0$. That is, the defining ideal $J$ of $A$ in $\CC[T^*\Gg]$ is the one generated by $\theta_0(K)$. By Corollary~\ref{cor:classinv}, this ideal is generated by all $2\times 2$ minors of the commutator $[x,y]$ for $(x,y)\in\Gg\times \Gg$.
\end{itemize}

There is a projection morphism $\phi:X \ra A$ that maps a triple $(x,y,i)\in X$ to the pair $(x,y)\in A$. We will show that:

\begin{theorem} \label{theo:mainclass}
The induced morphism on the categorical quotients:
\[\phi:X/\!/\{\pm 1\}\lra A/\!/\{\pm 1\}=A,\]
is an isomorphism.
\end{theorem}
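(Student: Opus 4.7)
The plan is to prove the isomorphism by working on coordinate rings, realizing $X$ scheme-theoretically as a fibered product and then taking $\{\pm 1\}$-invariants. Explicitly, I would identify $X$ with $A\times_{\ol{\OO}} V$ inside $T^*\Gg\times V$, where $A\to\ol{\OO}$ comes from the restriction of $\mu_0$ (well-defined by the very definition of $A$) and $V\to\ol{\OO}$ is the map $v\mapsto -v^2$, whose pullback on functions is $-\theta_1$. The defining ideal of $A\times_{\ol{\OO}} V$ inside $\CC[T^*\Gg]\otimes\CC[V]$ is generated by $J\otimes\CC[V]$ (forcing the $T^*\Gg$-coordinate into $A$) together with the fiber equations $\theta_0(x)+\theta_1(x)=\theta_2(x)$ for $x\in\Gg$; the latter already generate $I$, so the identification $X=A\times_{\ol{\OO}} V$ reduces to verifying the inclusion $J\otimes\CC[V]\sub I$.

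For this inclusion, on $X$ we have the matrix relation $[x,y]+i^2\equiv 0\pmod{I}$, so any polynomial in the entries of $[x,y]$ is congruent modulo $I$ to the same polynomial in the entries of $-i^2$. By Lemma~\ref{lem:rank1}, the ideal $K$ is generated by the $2\times 2$ minors of a matrix in $\Gg$. It therefore suffices to observe that $i^2\in\Sym^2 V\simeq\Gg$ is pointwise a rank-at-most-one matrix: under the identification $\Sym^2 V\simeq\spp(V)$, the element $i\cdot i$ corresponds (up to a scalar) to the symmetric endomorphism $w\mapsto\omega(i,w)\,i$, which has rank at most one. Hence every $2\times 2$ minor of $i^2$ vanishes identically as a polynomial on $V$, so $\theta_0(g)\in I$ for every $g\in K$, and $J\otimes\CC[V]\sub I$ as required.

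Granting the fibered-product description, the conclusion is formal. Since $\{\pm 1\}$ acts trivially on $\CC[A]$ and $\CC[\ol{\OO}]$, and since the decomposition $\CC[V]=\CC[V]_{even}\oplus\CC[V]_{odd}$ is a decomposition of $\CC[\ol{\OO}]$-modules (using $\CC[\ol{\OO}]=\CC[V]^{\{\pm 1\}}=\CC[V]_{even}$ from Corollary~\ref{cor:classinv}), taking invariants passes through the tensor product:
\[\CC[X]^{\{\pm 1\}}=\big(\CC[A]\otimes_{\CC[\ol{\OO}]}\CC[V]\big)^{\{\pm 1\}}=\CC[A]\otimes_{\CC[\ol{\OO}]}\CC[V]_{even}=\CC[A]\otimes_{\CC[\ol{\OO}]}\CC[\ol{\OO}]=\CC[A].\]
This isomorphism is precisely $\phi^*$. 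The main obstacle is the inclusion $J\otimes\CC[V]\sub I$ carried out in the second paragraph, which hinges on the symplectic-specific identification $\Sym^2 V\simeq\spp(V)$ forcing rank-one-ness of $v^2$; once this ingredient is in place, the rest is formal Hamiltonian-reduction bookkeeping and the reductivity of $\{\pm 1\}$.
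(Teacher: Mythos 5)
Your proof is correct, but it takes a genuinely different route from the paper. The paper applies a general abstract lemma (Lemma~\ref{lem:maintech}) about an algebra $\aaa$ equipped with a sign anti-automorphism $S$ and two comoment-type maps, specialized here to $\aaa=\Sym(\Gg)$, $\bb_0=\CC[T^*\Gg]$, $\bb_1=\CC[V]_{even}$, $f_i=\theta_i$; the $S$-invariance of $K$ (being generated in degree $2$) feeds into the lemma's hypothesis and the conclusion delivers the isomorphism on the nose. You instead realize $X$ scheme-theoretically as the fiber product $A\times_{\ol{\OO}}V$ over the morphisms $\mu_0|_A:A\to\ol{\OO}$ and $v\mapsto -v^2$, reduce the identification of defining ideals to the inclusion $J\otimes\CC[V]\sub I$, and verify it by the geometric observation that $i^2$ has rank at most one — which is really the same fact as $K=\ker(\theta_1)$ with generators of degree $2$, re-derived directly; the rest is the routine computation that $\{\pm 1\}$-invariants of $\CC[A]\otimes_{\CC[\ol{\OO}]}\CC[V]$ collapse to $\CC[A]$ because $\CC[V]^{\{\pm1\}}=\CC[\ol{\OO}]$. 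Your version is arguably more transparent and geometric in the classical setting. The trade-off is that the paper's Lemma~\ref{lem:maintech} is formulated for possibly non-commutative $\bb_0,\bb_1$ and is reused verbatim in $\mathsection\ref{sec:isotwo}$ to prove the quantum analogue (Theorem~\ref{theo:phi}) with $\bb_0=\dd(\Gg)$, $\bb_1=W_{2n,even}$; the fiber-product description has no direct non-commutative counterpart, so your argument would not extend to that case without being recast in essentially the lemma's form. One tiny imprecision worth noting: the pullback of the map $v\mapsto -v^2$ is $(-1)^{\deg}\theta_1$ on homogeneous elements, not $-\theta_1$ globally; this does not affect your argument since you only use it on linear generators of $\CC[\ol{\OO}]$ and on the degree-$2$ minors.
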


The proof of this theorem will be a consequence of the following linear algebraic lemma, which will also be useful for proving the quantum Hamiltonian reduction statement in $\mathsection{\ref{sec:isotwo}}$. The algebras in the following lemma aren't necessarily assumed to be commutative.

\begin{lemma} \label{lem:maintech}
Let $\aaa$ be an associative $\CC$-algebra generated by a vector space $\mathcal{V}\sub \aaa$. Suppose the linear map $S:\mathcal{V}\ra\mathcal{V}$ that sends $v\mapsto -v$ for $v\in\mathcal{V}$ extends to an algebra anti-automorphism $S:\aaa\ra \aaa$. Let $\bb_0$ and $\bb_1$ be $\CC$-algebras and fix algebra homomorphisms $f_i: \aaa \ra \bb_i$ for $i=0,1$. Suppose $f_1$ is surjective and let $\mathcal{I}=\ker(f_1)$. Let $f_2:\mathcal{V}\ra \bb_0\otimes \bb_1$ be the linear map defined via the formula $v\mapsto f_0(v)\otimes 1+1\otimes f_1(v)$. Then, there exists a vector space isomorphism:
\[\bb_0/(\bb_0\cdot f_0(S(\mathcal{I})))\simeq (\bb_0\otimes \bb_1)/((\bb_0\otimes \bb_1)\cdot f_2(\mathcal{V})),\]
induced by the linear map $\bb_0\ra \bb_0\otimes \bb_1$ that sends $b\mapsto b\otimes 1$ for $b\in \bb_0$.
\end{lemma}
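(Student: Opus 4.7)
The plan is to produce mutually inverse maps between the two quotients. In addition to the map $\bar\iota$ induced by $b\mapsto b\otimes 1$, I will construct a map
\[\psi:\bb_0\otimes\bb_1 \lra \bb_0\big/\bigl(\bb_0\cdot f_0(S(\mathcal{I}))\bigr)\]
sending $b\otimes f_1(w)$ to the class of $b\cdot f_0(S(w))$. Surjectivity of $f_1$ makes this definable on every simple tensor, and the definition does not depend on the choice of lift $w$: if $f_1(w)=f_1(w')$ then $w-w'\in\mathcal{I}$, whence $f_0(S(w-w'))\in f_0(S(\mathcal{I}))$. The main structural input is the following \emph{key congruence} $(\ast)$, valid for every $u\in\aaa$:
\[f_0(u)\otimes 1 \;\equiv\; 1\otimes f_1(S(u)) \pmod{(\bb_0\otimes\bb_1)\cdot f_2(\mathcal{V})}. \qquad (\ast)\]

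To prove $(\ast)$, I reduce by linearity to the case where $u=v_1 v_2\cdots v_k$ is a product of elements of $\mathcal{V}$, and I establish by induction on $0\leq j\leq k$ the telescoping relation
\[f_0(v_1\cdots v_k)\otimes 1 \;\equiv\; (-1)^j\, f_0(v_1\cdots v_{k-j})\otimes f_1(v_k v_{k-1}\cdots v_{k-j+1})\]
modulo $(\bb_0\otimes\bb_1)\cdot f_2(\mathcal{V})$. The inductive step rests on the observation that the element
\[\bigl(f_0(v_1\cdots v_{k-j-1})\otimes f_1(v_k\cdots v_{k-j+1})\bigr)\cdot f_2(v_{k-j})\]
lies in the ideal; expanding $f_2(v_{k-j})= f_0(v_{k-j})\otimes 1 + 1\otimes f_1(v_{k-j})$ produces precisely the cancellation needed to pass from step $j$ to step $j+1$. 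At $j=k$ the relation becomes $f_0(u)\otimes 1\equiv (-1)^k\otimes f_1(v_k\cdots v_1)$, and the anti-automorphism identity $S(v_1\cdots v_k)=(-1)^k v_k\cdots v_1$ converts this into $(\ast)$.

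Granting $(\ast)$, the rest is formal. Applying $(\ast)$ with $u=S(w)$ for $w\in\mathcal{I}$, and using that $S\circ S=\mathrm{id}$ (an algebra automorphism of $\aaa$ fixing the generating set $\mathcal{V}$), yields $f_0(S(w))\otimes 1\equiv 1\otimes f_1(w)=0$ modulo the ideal, so $\iota$ sends $\bb_0\cdot f_0(S(\mathcal{I}))$ into $(\bb_0\otimes\bb_1)\cdot f_2(\mathcal{V})$ and descends to $\bar\iota$. A short direct computation shows that $\psi$ annihilates the generating elements
\[(c\otimes f_1(w))\cdot f_2(v)\;=\; c f_0(v)\otimes f_1(w)+c\otimes f_1(wv)\]
of the ideal: they map under $\psi$ to $c\,f_0(v+S(v))\,f_0(S(w))=0$, since $S(v)=-v$. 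This gives an induced map $\bar\psi$ going in the reverse direction. The composite $\bar\psi\circ\bar\iota$ is visibly the identity, and $\bar\iota\circ\bar\psi=\mathrm{id}$ is another direct application of $(\ast)$. The heart of the proof, and its only real obstacle, is the key congruence $(\ast)$; the delicate point is that $(\bb_0\otimes\bb_1)\cdot f_2(\mathcal{V})$ is only a \emph{left} ideal, so congruences modulo it cannot be manipulated by right-multiplication, and the telescoping induction has to be arranged so that every reduction step arises from a genuine left multiplication by a generator of the ideal.
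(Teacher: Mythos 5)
Your proof is correct and follows essentially the same route as the paper's: the central ingredient in both is the congruence $f_0(u)\otimes 1 \equiv 1\otimes f_1(S(u))$ modulo the left ideal $(\bb_0\otimes\bb_1)\cdot f_2(\mathcal V)$, proved by induction on word length in $\mathcal V$, and both construct mutually inverse maps between the two quotients; the paper packages the inverse through $\bb_0\otimes_{\aaa}(\aaa/S(\mathcal I))$ and an anti-isomorphism $\sigma:\aaa/S(\mathcal I)\to\bb_1$, which after unwinding is exactly your $\bar\psi$, defined by choosing a lift $w$ with $f_1(w)=b_1$ and sending $b_0\otimes b_1$ to the class of $b_0f_0(S(w))$. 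Your write-up makes the telescoping induction and the lift-independence explicit rather than delegating them to tensor-product formalism, and you correctly flag that only left-multiplication is available when manipulating congruences modulo the left ideal — a point the paper glosses over but relies on implicitly.
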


\begin{proof}
By definition, we have that $f_2(v)=f_0(v)\otimes 1+1\otimes f_1(v) \in ((\bb_0\otimes \bb_1)\cdot f_2(\mathcal{V}))$. As $\aaa$ is generated by $\mathcal{V}$, we get a filtration on $\aaa$ such that the $i^{th}$ filtered piece consists of linear combinations of elements that are products of at most $i$ elements from $\mathcal{A}$. Then, by induction on this filtration, 
for any $a\in \aaa$, we can use the above fact to show that:
\[f_0(a)\otimes 1 = 1\otimes f_1(S(a)) \mod ((\bb_0\otimes \bb_1)\cdot f_2(\mathcal{V})).\]

Next, we observe that:
\[\bb_0/(\bb_0\cdot f_0(S(\mathcal{I}))) \simeq \bb_0 \otimes_{\aaa} (\aaa/S(\mathcal{I})).\]
Further, we have that the maps $S$ and $f_1$ induces an algebra anti-isomorphism:
\[\sigma:
\begin{tikzcd}
    \aaa/S(\mathcal{I}) & \aaa/S(S(\mathcal{I}))
	\arrow["\sim"', "S",, from=1-1, to=1-2]
\end{tikzcd}
= \begin{tikzcd}
    \aaa/\mathcal{I} & \bb_1
	\arrow["\sim"', "f_1",, from=1-1, to=1-2]
\end{tikzcd}.\]
We also have an inverse to this map which we denote by $\sigma':\bb_1\ra \aaa/S(\mathcal{I})$.
Now we can define the required isomorphism:
\[\Phi:\bb_0\otimes_{\aaa} (\aaa/S(\mathcal{I})) \lra (\bb_0\otimes \bb_1)/((\bb_0\otimes \bb_1)\cdot f_2(\mathcal{V})),\]
via the formula $b_0\otimes a \mapsto b_0\otimes \sigma(a)$. To see that this is well-defined, we need to verify that $f_0(a)\otimes 1 - 1\otimes a$ maps to $0$ under $\Phi$ for any $a\in \aaa$. This follows from:
\[\Phi(f_0(a)\otimes 1 - 1\otimes a)=f_0(a)\otimes 1 - 1\otimes f_1(S(a))\in ((\bb_0\otimes \bb_1)\cdot f_2(\mathcal{V})).\]
To see that $\Phi$ is an isomorphism, we write down its inverse:
\[\Psi: (\bb_0\otimes \bb_1)/((\bb_0\otimes \bb_1)\cdot f_2(\mathcal{V})) \lra \bb_0\otimes_{\aaa} (\aaa/S(\mathcal{I})),\]
defined by the formula $b_0\otimes b_1 \mapsto b_0\otimes \sigma'(b_1)$. This map is well-defined because under $\Psi$, the space $(\bb_0\otimes \bb_1)\cdot f_2(\mathcal{V})$ maps exactly to the tensor product relations in $\bb_0\otimes_{\aaa} (\aaa/S(\mathcal{I}))$. That $\Phi$ and $\Psi$ are inverses to each other is straighforward.

\end{proof}

\begin{remark}
One can weaken the hypothesis of the lemma and assume that $\bb_0$ and $\bb_1$ are $\aaa$-modules (rather than algebras) and prove a slightly modified statement, but we don't need that generality here.
\end{remark}

\begin{proof}[Proof of Theorem~\ref{theo:mainclass}]
To prove the isomorphsim of schemes, we need to prove that their respective coordinate rings are isomorphic. That is, we need to show that there is an isomorphism:
\[\phi^*:\CC[T^*\Gg]/J = \CC[A]\lra \CC[X]^{\{\pm 1\}} = \Big(\CC[T^*\Gg]\otimes \CC[V])/I\Big)^{\{\pm 1\}},\]
induced by the map $p\mapsto p\otimes 1$ for $p\in \CC[T^*\Gg]$. As the group $\{\pm 1\}$ acts trivially on $\CC[T^*\Gg],$ we have the equality of algebras:
\[\Big(\CC[T^*\Gg]\otimes \CC[V])/I\Big)^{\{\pm 1\}}= (\CC[T^*\Gg]\otimes \CC[V]_{even})/(I^{\{\pm 1\}}).\]
We apply Lemma~\ref{lem:maintech} by taking $\aaa=\Sym(\Gg)$, $\mathcal{V}=\Gg$, $\bb_0=\CC[T^*\Gg]$, $\bb_1=\CC[V]_{even}$ and $f_i=\theta_i$ for $i=0,1,2$. The ideal $\mathcal{I}=\ker(f_1)=\ker(\theta_1)$, in this case, is equal to $K$, which is $S$-invariant as it is generated by homogeneous polynomials of degree 2 by Corollary~\ref{cor:classinv}. Then, the conclusion of Lemma~\ref{lem:maintech} gives us the required isomorphism.
\end{proof}

\begin{remark}
Theorem~\ref{theo:mainclass} is a special case of the following `shifting trick', which is also an immediate consequence of Lemma~\ref{lem:maintech}:

Let $\Gg$ be a reductive Lie algebra. Let $G$ be the corresponding adjoint group and fix $\OO$ to be a $G$-orbit in $\Gg^*$ under the co-adjoint action. Let $Y$ be an affine variety with a Hamiltonian $G$-action. Then, the Hamiltonian reduction of $Y$ at the negative orbit closure $-\ol{\OO}$ is isomorphic to the Hamiltonian reduction of the variety $Y\times \ol{\OO}$ at $0\in\Gg^*$.
\end{remark}

\subsubsection{}
As a consequence of Theorem~\ref{theo:mainclass}, we have an isomorphism of schemes $X/\!/G\simeq A/\!/G$. Therefore, we have morphisms:
\[C/\!/G \stackrel{\psi}{\lra} A/\!/G \stackrel{\phi}{\longleftarrow} X/\!/G.\]
Set-theoretically, these morphisms are as follows: The map $\psi$ sends a pair $(x,y)$ of commuting matrices to the almost commuting pair $(x,y)$. The map $\phi$ sends a triple $(x,y,i)$ to the pair $(x,y)$. We have shown that $\phi$ is an isomorphism. The morphism $\psi$ is an isomorphism too, because it is proven in \cite{L} that we have an isomorphism $C/\!/G \ra X/\!/G$, and that morphism composed with $\phi$ gives $\psi$.

\begin{remark}
The fact that $\psi$ is an isomorphism can also be proven independently by mimicking the proof of Theorem 12.1 of \cite{EG}, making use of Weyl's fundamental theorem of invariant theory for $\Gg=\spp(V)$.
\end{remark}

Combining Theorem~\ref{theo:red} with Theorem 1.3 of \cite{L}, we have the following corollary:

\begin{corollary} \label{lem:combi}
We have an algebra isomorphism:
\[\CC[\Hh\times\Hh]^W=\CC[(\Hh\times\Hh)/\!/W] \stackrel{\sim}{\lra} \CC[A/\!/G]=\Big(\CC[\Gg\times\Gg]/J\Big)^{\Gg}.\]
\end{corollary}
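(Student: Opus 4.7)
The plan is to chain two isomorphisms through the intermediate object $\CC[X/\!/G]$. First, I would invoke Theorem 1.3 of \cite{L}, which provides an algebra isomorphism $\CC[\Hh\times\Hh]^W \simeq \CC[X/\!/G]$---the almost commuting analog of the Chevalley restriction theorem for $\Gg=\spp(V)$. Second, I would deduce from Theorem~\ref{theo:red} the isomorphism $\CC[X/\!/G] \simeq \CC[A/\!/G]$ by passing from $\{\pm 1\}$-invariants to full $G$-invariants on both sides.

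For the second step, observe that the projection $\phi\colon X \to A$ given by $(x,y,i)\mapsto (x,y)$ is manifestly $G$-equivariant with respect to the diagonal adjoint and symplectic action. Hence the induced isomorphism $X/\!/\{\pm 1\} \stackrel{\sim}{\lra} A$ of Theorem~\ref{theo:red} is equivariant for the residual action of $G/\{\pm 1\}$ (noting that $\{\pm 1\}$ acts trivially on $A$). Taking $G/\{\pm 1\}$-invariants on both sides and using the identity $X/\!/G = (X/\!/\{\pm 1\})/\!/(G/\{\pm 1\})$, one obtains
\[X/\!/G \stackrel{\sim}{\lra} A/\!/G,\]
and hence an algebra isomorphism $\CC[X/\!/G] \stackrel{\sim}{\lra} \CC[A/\!/G] = \bigl(\CC[\Gg\times\Gg]/J\bigr)^{\Gg}$. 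Composing with the Losev isomorphism from the previous paragraph produces the map required by the corollary.

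No serious obstacle is anticipated: this corollary is essentially a repackaging of Theorem~\ref{theo:red} together with the Chevalley-type theorem from \cite{L}. The only genuine bookkeeping is verifying the $G$-equivariance in Theorem~\ref{theo:red} and the iterated quotient identity for $X/\!/G$; both are immediate from the centrality of $\{\pm 1\}$ in $G$ and from the fact that the isomorphism of Theorem~\ref{theo:red} is induced by a $G$-equivariant coordinate projection.
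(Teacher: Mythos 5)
Your proposal is correct and follows exactly the route the paper itself takes: the corollary is obtained by composing the Chevalley-type isomorphism $\CC[\Hh\times\Hh]^W \simeq \CC[X/\!/G]$ from Theorem 1.3 of \cite{L} with the isomorphism $\CC[X/\!/G]\simeq \CC[A/\!/G]$ obtained from Theorem~\ref{theo:red} by passing to $G$-invariants. Your verification that the Theorem~\ref{theo:red} isomorphism is $G/\{\pm 1\}$-equivariant and that $X/\!/G = (X/\!/\{\pm 1\})/\!/(G/\{\pm 1\})$ is precisely the bookkeeping the paper leaves implicit when it states ``As a consequence of Theorem~\ref{theo:mainclass}, we have an isomorphism of schemes $X/\!/G\simeq A/\!/G$.''
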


\subsection{Reminder on rational Cherednik algebras and the Harish-Chandra homomorphism} \label{sec:RCA}

\subsubsection{}
Fix a Cartan sublagebra $\Hh\sub \Gg=\spp(V)$ and a root system $R\sub \Hh^*$. The space $\Hh$ (and hence $\Hh^*$) has an action of the Weyl group $W=(\ZZ/(2))^n \rtimes S_n$. For each $\alpha\in R$, let $s_{\alpha}\in W$ denote the reflection of $\Hh$ relative to the root $\alpha$. Fix a $W$-invariant function $c:R\ra \CC$. For root systems of Type $C$, it turns out that there are exactly two $W$-orbits in $R$ given by the set of all long roots and the set of all short roots. Hence, such a $W$-invariant function $c$ can be viewed as a pair of complex numbers $c=(c_{long},c_{short})\in \CC^2$.

We recall from \cite{EG} the definition of the rational Cherednik algebra $H_c$ of Type $C$, which is the one generated by the algebras $\Sym(\Hh), \CC[\Hh] (\simeq \Sym(\Hh^*))$ and the group algebra $\CC[W]$ with defining relations given by:
\[wxw^{-1}=w(x), wyw^{-1}=w(y),\]
\[[y,x]=\langle x,y\rangle - \dfrac{1}{2}\sum_{\alpha\in R}c(\alpha)\langle\alpha,y\rangle\langle x,\alpha^{\vee}\rangle \cdot s_{\alpha},\]
for all $w\in W$, $x\in \Hh^*$ and $y\in \Hh$. Let $e=\frac{1}{|W|}\sum_{w\in W} w$ be the averaging idempotent in $\CC[W]$. Then, we can construct the spherical subalgebra $eH_ce \sub H_c$, which will be our main object of interest. It is known that the subalgebras $\CC[\Hh]^W$ and $\Sym(\Hh)^W$ generate the algebra $eH_ce$ (see \cite[Proposition 4.9]{EG}).

Another description of the algebra $eH_ce$ given in \cite{EG} will be useful for us, and we recall that here. Define the rational Calogero-Moser operator $L_c$, which is a differential operator on the space $\Hh^{reg}$, as follows (see \cite{OP}):
\[L_c:=\Delta_{\Hh}-\dfrac{1}{2}\sum_{\alpha\in R}\dfrac{c(\alpha)(c(\alpha)+1)}{\alpha^2}\cdot(\alpha,\alpha),\]
where $\Delta_{\Hh}$ is the Laplacian operator on the Cartan subalgebra $\Hh$. Let $\dd(\Hh^{reg})_{-}$ denote the subalgebra of $\dd(\Hh^{reg})$ spanned by differential operators $D\in \dd(\Hh^{reg})$ such that $degree(D)+order(D)\leq 0$. Here, $degree$ denotes the degree of the filtration on $\dd(\Hh^{reg})$ with respect to the $\CC^*$ action and $order$ denotes the order of the differential operator. Let $\mathcal{C}_c$ denote the centralizer of the operator $L_c$ in $\dd(\Hh^{reg})_-^W$. Finally, consider the subalgebra $\mathcal{B}_c$ of $\dd(\Hh^{reg})$ generated by $\mathcal{C}_c$ and $\CC[\Hh]^W$, the algebra of $W$-invariant polynomial functions on $\Hh$.

\begin{theorem} \cite[Proposition 4.9]{EG}\label{theo:genRCA}
We have an embedding (known as the Dunkl embedding) $\Theta:eH_ce \hra \dd(\Hh^{reg})^W$ such that $\Theta(\CC[\Hh]^W)=\CC[\Hh]^W$ and $\Theta(\Sym(\Hh)^W)=\cc_c$. Moreover, $\Theta(\Delta_{\Hh})=L_c$. Furthermore, $\Theta$ induces an isomorphism of algebras $eH_ce\simeq \bb_c$.
\end{theorem}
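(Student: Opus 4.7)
The plan is to construct the Dunkl embedding explicitly and then read off the four stated properties by direct computation, following the blueprint that goes back to Cherednik and is carried out in the needed generality in \cite{EG}. First I would introduce, for each $y \in \Hh$, the Dunkl operator on $\CC[\Hh^{reg}]$:
\[
T_y = \partial_y - \frac{1}{2}\sum_{\alpha \in R} c(\alpha)\,\langle\alpha,y\rangle\,\frac{1-s_\alpha}{\alpha}.
\]
A direct check (using that the $c$-function is $W$-invariant and that the root system is crystallographic) shows that the $T_y$'s commute pairwise and together with multiplication by $\CC[\Hh]$ and the natural $W$-action satisfy the defining relations of $H_c$. Consequently one obtains a homomorphism $\rho: H_c \lra \dd(\Hh^{reg})\rtimes W$ sending $\Hh \ni y \mapsto T_y$, fixing $\CC[\Hh]$ elementwise, and acting by the given action on $\CC[W]$. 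Injectivity of $\rho$ follows from the PBW theorem for $H_c$: the associated graded of the filtration by order in $y$ identifies $H_c$ with $\CC[\Hh]\otimes \Sym(\Hh)\otimes \CC[W]$, and the leading symbol of $T_y$ is $\partial_y$, which is visibly free over $\CC[\Hh^{reg}]\otimes\CC[W]$.

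Next, restricting $\rho$ to the spherical subalgebra gives a map $\Theta = e\rho(-)e : eH_ce \lra e(\dd(\Hh^{reg})\rtimes W)e \simeq \dd(\Hh^{reg})^W$, which is the desired Dunkl embedding. The identification $\Theta(\CC[\Hh]^W)=\CC[\Hh]^W$ is immediate from the construction. To obtain $\Theta(\Delta_\Hh) = L_c$, I would fix an orthonormal basis $y_1,\dots,y_n$ of $\Hh$ and expand
\[
\sum_i T_{y_i}^{2} = \Delta_{\Hh} - \sum_{\alpha\in R}\frac{c(\alpha)}{\alpha}\,\partial_\alpha + (\text{terms in } \CC[W]).
\]
After conjugating by $e$, the $W$-noninvariant pieces collapse into the scalar correction $-\frac{1}{2}\sum_{\alpha}\frac{c(\alpha)(c(\alpha)+1)}{\alpha^2}(\alpha,\alpha)$, which is the definition of $L_c$; this is a standard but somewhat delicate symmetrization calculation that I would carry out via the $s_\alpha$-antiinvariance of $1/\alpha$.

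To identify $\Theta(\Sym(\Hh)^W)$ with $\cc_c$, note that every $p \in \Sym(\Hh)^W$ commutes with $\sum_i y_i^2$ inside $H_c$, hence $\Theta(p)$ commutes with $L_c$; since each $T_y$ has bidegree $(\deg,\mathrm{ord})=(-1,1)$, any element of $\Theta(\Sym(\Hh)^W)$ lies in $\dd(\Hh^{reg})_-^W$, placing it inside $\cc_c$. The reverse inclusion is the subtler half: given $D \in \cc_c$, one writes $D$ in Dunkl form as a polynomial in the $T_{y_i}$'s with coefficients in $\CC[\Hh^{reg}]^W$ by an iterative argument on the symbol, and then uses the condition $\deg + \mathrm{ord}\leq 0$ together with $[D,L_c]=0$ to force the coefficients to be constants, yielding $D \in \Theta(\Sym(\Hh)^W)$. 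Finally, since $eH_ce$ is generated by $\CC[\Hh]^W$ and $\Sym(\Hh)^W$ (Proposition~4.9 of \cite{EG}), the image $\Theta(eH_ce)$ is the subalgebra of $\dd(\Hh^{reg})^W$ generated by $\CC[\Hh]^W$ and $\cc_c$, which is exactly $\bb_c$; combined with the injectivity of $\Theta$, this gives the isomorphism $eH_ce \simeq \bb_c$.

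The main obstacle is the second half of the centralizer identification: showing that the full centralizer of $L_c$ in $\dd(\Hh^{reg})_-^W$ coincides with the image of $\Sym(\Hh)^W$, rather than being strictly larger. This requires a careful leading-symbol analysis together with the fact that $L_c$ is a completely integrable operator whose ring of commuting differential operators is generated by the images of the $W$-invariant polynomials in the Dunkl operators, a result established in Type $C$ in \cite{EG} and \cite{OP}.
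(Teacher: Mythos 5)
The paper does not prove this statement; it is cited verbatim as \cite[Proposition 4.9]{EG} and treated as an imported result, so there is no in-paper argument to compare against. Your sketch is a faithful reconstruction of the Etingof--Ginzburg argument: the Dunkl operators you write down satisfy $[T_y,x]=\langle x,y\rangle-\frac12\sum_\alpha c(\alpha)\langle\alpha,y\rangle\langle x,\alpha^\vee\rangle s_\alpha$, which matches the paper's defining relation for $H_c$; the PBW-symbol argument gives injectivity; and the spherical reduction $e(\dd(\Hh^{reg})\rtimes W)e\simeq \dd(\Hh^{reg})^W$ is the right way to obtain $\Theta$. You also correctly isolate the genuinely nontrivial step, namely showing that $\Theta(\Sym(\Hh)^W)$ exhausts the centralizer $\cc_c$ rather than being a proper subalgebra, which is exactly where \cite{EG} and \cite{OP} invest most of the work. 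Since the paper simply invokes the result, your proposal is not in competition with an alternative route here; it is a correct account of the standard proof the paper is relying on.
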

The algebra $H_c$ has a filtration such that all the elements of $W$ and the generators of $\Sym(\Hh^*)$ have degree zero, and the generators of $\Sym(\Hh)$ have degree one. This induces a filtration on the spherical subalgebra $eH_ce$. The algebra $\dd(\Hh^{reg})$ has a filtration given by the order of differential operators. Then, we have the following PBW theorem for Cherednik algebras:

\begin{proposition} \cite[Corollary 4.4]{EG} \label{prop:PBW}
With the above filtrations, we have an isomorphism:
\[\gr(eH_ce) \simeq \CC[\Hh\times\Hh]^W.\]
Furthermore, under this isomorphism, the associated graded version of the Dunkl embedding $\gr(\Theta): \CC[\Hh\times\Hh]^W \simeq\gr(eH_ce)\ra\gr(\dd(\Hh^{reg}))=\CC[\Hh^{reg}\times\Hh^*]^W$ is exactly the algebra homomorphism induced by the natural embedding $\Hh^{reg}\hra \Hh$.
\end{proposition}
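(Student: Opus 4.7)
My plan is to bootstrap the spherical PBW statement from the PBW theorem for the full rational Cherednik algebra $H_c$. Equip $H_c$ with the given filtration: $\Hh$ in degree $1$, and $\Hh^*$ and $W$ in degree $0$. In the defining relation
\[[y,x]=\langle x,y\rangle - \tfrac{1}{2}\sum_{\alpha\in R}c(\alpha)\langle\alpha,y\rangle\langle x,\alpha^\vee\rangle\,s_\alpha,\]
the left-hand side lies in filtration degree at most $1$ while the right-hand side lies in degree $0$, so in $\gr(H_c)$ every $y\in\Hh$ commutes with every $x\in\Hh^*$. Combined with the manifest $W$-equivariance relations, this gives a surjection $\CC[\Hh\times\Hh^*]\# W\twoheadrightarrow\gr(H_c)$. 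Injectivity---the substantive half of PBW---I would establish via the Dunkl representation of $H_c$ on $\CC[\Hh]$, with $x\in\Hh^*$ acting by multiplication, $w\in W$ by its natural action, and $y\in\Hh$ by the Dunkl operator $T_y$. Verifying that the defining relations hold on this representation and checking faithfulness degree-by-degree forces $\gr(H_c)\simeq\CC[\Hh\times\Hh^*]\# W$.

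Next I would pass to the spherical corner. Since $e\in\CC[W]$ sits in filtration degree $0$, the subspace $eH_ce$ inherits a filtration from $H_c$ and $\gr(eH_ce)\hra\gr(H_c)$. The image lies inside $e\cdot(\CC[\Hh\times\Hh^*]\# W)\cdot e$, which, because $e$ is the averaging idempotent, is canonically isomorphic as an algebra to $\CC[\Hh\times\Hh^*]^W$ via $fe\leftrightarrow f$. For the reverse inclusion, given $f\in\CC[\Hh\times\Hh^*]^W$ of total degree $n$, PBW for $H_c$ provides a lift $\tilde f\in H_c$ of filtration degree $n$; then $e\tilde f e\in eH_ce$ has associated graded $efe=fe$ by the $W$-invariance of $f$, exhibiting $f$ in $\gr(eH_ce)$. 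Hence $\gr(eH_ce)\simeq\CC[\Hh\times\Hh^*]^W$, which under any $W$-equivariant identification $\Hh^*\simeq\Hh$ becomes $\CC[\Hh\times\Hh]^W$.

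For the second assertion, I would compute $\gr(\Theta)$ on generators. By Theorem~\ref{theo:genRCA}, $\Theta$ fixes $\CC[\Hh]^W$ pointwise; any such $p$ acts as an order-$0$ multiplication operator, so its principal symbol is $p$ itself viewed on $\Hh\times\Hh^*$. Each $y\in\Hh$ is sent to the Dunkl operator $T_y$, a first-order differential operator differing from $\partial_y$ by a zeroth-order correction with poles on the discriminant, so its principal symbol is $\partial_y$, i.e., $y$ viewed as a linear coordinate on $\Hh^*$. Consequently, for any $q\in\Sym(\Hh)^W$, the operator $\Theta(q)$ has order $\deg(q)$ with principal symbol equal to $q$ regarded as a polynomial on $\Hh^*$. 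Thus on $\CC[\Hh]^W$ and $\Sym(\Hh)^W$, the map $\gr(\Theta)$ matches the natural restriction $\CC[\Hh\times\Hh^*]^W\to\CC[\Hh^{reg}\times\Hh^*]^W$. Both are graded Poisson algebra homomorphisms, and since $\CC[\Hh]^W$ and $\Sym(\Hh)^W$ generate $eH_ce$ as a filtered algebra (as noted preceding Theorem~\ref{theo:genRCA}), they Poisson-generate $\gr(eH_ce)=\CC[\Hh\times\Hh^*]^W$ (commutators in $eH_ce$ drop filtration degree, and their symbols realize the Poisson brackets of the generators). The two maps therefore agree on all of $\gr(eH_ce)$.

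The main technical obstacle I anticipate is the PBW theorem for $H_c$ itself: ruling out relations in $\gr(H_c)$ beyond those forced by filtration-degree comparisons. The cleanest route passes through the faithful Dunkl representation, which requires a direct verification that the explicit Dunkl operators $T_y$ on $\CC[\Hh]$ satisfy the commutation relation displayed above. Once that step is in place, the spherical corner identification and the Poisson-compatibility argument for $\gr(\Theta)$ are essentially formal.
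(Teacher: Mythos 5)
Your overall strategy---bootstrap from PBW for the full algebra $H_c$ via the faithful Dunkl representation, pass to the spherical corner through $e(\CC[\Hh\times\Hh^*]\#W)e\simeq\CC[\Hh\times\Hh^*]^W$, and then identify $\gr(\Theta)$ by checking it on the two ``axis'' subalgebras---is exactly the standard route; this is what \cite{EG} does, and the paper itself simply cites that source rather than reproving anything. The first two stages are sound. In particular, the identification $\gr(eH_ce)\simeq e\,\gr(H_c)\,e$ (using $e^2=e$ and that $e$ sits in degree $0$) and the lifting argument $e\tilde f e\mapsto efe=fe$ for $W$-invariant $f$ are both correct.

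The final step, however, has a genuine gap. You assert that because $\CC[\Hh]^W$ and $\Sym(\Hh)^W$ generate $eH_ce$ as an algebra, their symbols must Poisson-generate $\gr(eH_ce)=\CC[\Hh\times\Hh^*]^W$. This implication is not formal. The parenthetical justification (``commutators drop filtration degree, and their symbols realize the Poisson brackets of the generators'') only shows that Poisson brackets of the symbols \emph{land inside} the image of the symbol map; it does not rule out leading-term cancellations. Concretely: if $a,b\in(eH_ce)_{\leq n}$ are words in the generators with $\sigma(a)+\sigma(b)=0$ in $\gr_n$, then $a+b\in(eH_ce)_{\leq n-1}$ and its symbol is an element of $\gr_{n-1}$ that is in no way determined by $\sigma(a)$ and $\sigma(b)$. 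Indeed, if the filtration on $eH_ce$ were literally the one generated by assigning degrees to $\CC[\Hh]^W$ and $\Sym(\Hh)^W$, then $\gr(eH_ce)$ would be generated by $\CC[\Hh]^W$ and $\CC[\Hh^*]^W$ as a \emph{commutative} algebra---but $\CC[\Hh\times\Hh^*]^W$ is not so generated (the mixed ``polarized'' invariants are missing), so the filtration is strictly finer than the naive one and cancellations do occur. The fact you actually need---that $\CC[\Hh]^W$ and $\CC[\Hh^*]^W$ Poisson-generate $\CC[\Hh\times\Hh^*]^W$ for every Weyl group $W$---is a theorem of Wallach (for type $A$ it is the classical polarization argument), and you should invoke it directly rather than try to derive it from algebra-generation of $eH_ce$. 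With that citation in place, the argument that two graded Poisson homomorphisms agreeing on the Poisson generators must coincide closes the proof correctly.
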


\subsubsection{} We recall the construction of the `universal' Harish-Chandra homomorphism from \cite{EG}. Let $\Gg^{rs}$ denote the subset of regular semisimple elements of $\Gg$. Then, we have that $\Hh^{reg}=\Hh\cap \Gg^{rs}$. Next, inside the universal enveloping algebra $\uu\Gg$, consider the space $(\uu\Gg)^{\ad(\Hh)}$ of $\ad(\Hh)$-invariants. Then, $(\uu\Gg)^{\ad(\Hh)}\cdot \Hh$ is a two-sided ideal of the algebra $(\uu\Gg)^{\ad(\Hh)}$, and so, we can define the quotient algebra $(\uu\Gg)_{\Hh}:=(\uu\Gg)^{\ad(\Hh)}/((\uu\Gg)^{\ad(\Hh)}\cdot\Hh).$

By Proposition 6.1 of \cite{EG}, and the paragraph following its proof, there exists a canonical algebra isomorphism:
\[\Psi: \dd(\Gg^{reg})^{\Gg} \lra (\dd(\Hh^{reg})\otimes (\uu\Gg)_{\Hh})^W.\]
Next, fix a $\uu\Gg$-module $\mathfrak{V}$ and let $\mathfrak{V}\langle0\rangle$ denote its zero weight space. Then, by definition, this space $\mathfrak{V}\langle0\rangle$ is acted upon trivially by $\Hh$, and is thus stable under the action of $(\uu\Gg)^{\ad(\Hh)}.$ Hence, we have an action of $(\uu\Gg)_{\Hh}$ on $\mathfrak{V}\langle0\rangle$, giving an algebra homomorphism $\chi: (\uu\Gg)_{\Hh} \ra \End_{\CC}(\mathfrak{V}\langle0\rangle)$. In particular, if $\mathfrak{V}\langle0\rangle$ is one-dimensional, we get a homomorphism $\chi:(\uu\Gg)_{\Hh}\ra\CC$. So, we can compose it with the above algebra isomorphism and restrict the map to $\dd(\Gg)^{\Gg}\sub \dd(\Gg^{reg})^{\Gg}$ to get a map:
\[\Psi_{\mathfrak{V}}:=\chi\circ\Psi:\dd(\Gg)^{\Gg} \lra \dd(\Hh^{reg})^W.\]
This is the Harish-Chandra homomorphism associated with the representation $\mathfrak{V}$. 

Let $\Ann(\mathfrak{V})\sub\uu\Gg$ be the annihilator of $\mathfrak{V}$ and consider the ideal $(\dd(\Gg)\cdot \ad(\Ann(\mathfrak{V})))^{\Gg},$ which is a two-sided ideal inside $\dd(\Gg)^{\Gg}$. Then, it follows from definitions and the proof of \cite[Proposition 6.1]{EG} that the kernel of the homomorphism $\Psi_{\mathfrak{V}}$ contains the ideal $(\dd(\Gg)\cdot \ad(\Ann(\mathfrak{V})))^{\Gg}.$

\subsubsection{}
For explicit computations in subsequent sections, we fix some coordinates. We identify the symplectic vector space $V$ with the space $\CC^{2n}$, which has the standard symplectic form given by the following $2n\times 2n$ skew-symmetric matrix:
\[\begin{pmatrix}0&I\\-I&0\end{pmatrix}.\]
The subspace $L\sub V=\CC^{2n}$, consisting of vectors whose last $n$ coordinates are zero, is a Lagrangian subspace of $V$ under the above symplectic form and the dual vector space $L^*$ can be identified with the space of vectors whose first $n$ coordinates are zero.

The Lie algebra $\spp(V)=\spp_{2n}$ is the space of endomorphisms preserving this form. Then, $\spp(V)$ can be viewed as the subspace of $\gl(V)=\gl_{2n}$ consisting of block matrices of the form:
\[\begin{pmatrix}A&B\\C&D\end{pmatrix},\]
where $A=-D^T, B=B^T$ and $C=C^T$. The subspace $\Hh$ consisting of diagonal matrices is a Cartan subalgera of $\spp_{2n}$. Identifying $\Hh\simeq \Hh^*$ using the trace form, we fix an orthonormal basis $r_1,r_2, \dots,r_n$ of $\Hh^*$ given by $r_i=(E_{i,i}-E_{i+n,i+n})/\sqrt{2}$, where $E_{i,j}\in \gl_{2n}$ is the elementary matrix whose only non-zero entry is in the $i^{th}$ row of the $j^{th}$ column and is equal to 1.

Then, we have a choice of a root system $R$ given by:
\[R:=\Bigg\{\pm\dfrac{(r_i+r_j)}{\sqrt{2}}, \pm\dfrac{(r_i-r_j)}{\sqrt{2}}, \pm\sqrt{2}r_i: 1\leq i< j \leq n\Bigg\}.\]
Here, the long roots are given the vectors $\pm\sqrt{2} r_i$, whereas the rest are all short roots. The set of positive roots $R^+\sub R$ is obtained by replacing all `$\pm$'-signs by `$+$' in the above defintion. Let $\{e_{\alpha}\}_{\alpha\in R}$ denote the set of root vectors in $\Hh$ that form a Cartan-Weyl basis of the Lie algebra $\Gg$ chosen so that $(e_{\alpha},e_{-\alpha})=1$, where $(.,.)$ is the trace form. Then, we can express the $e_{\alpha}$'s explicitly in terms of elementary matrices $E_{i,j}$ as follows:
\[\alpha=\dfrac{(r_i+r_j)}{\sqrt{2}} \implies e_{\alpha}=e_{-\alpha}^T= \dfrac{E_{i,j+n}+E_{j,i+n}}{\sqrt{2}}\]
\[\alpha=\dfrac{(r_i-r_j)}{\sqrt{2}} \implies e_{\alpha}=e_{-\alpha}^T= \dfrac{E_{i,j}-E_{j+n,i+n}}{\sqrt{2}}\]
\[\alpha=\sqrt{2}r_i\implies e_{\alpha}=e_{-\alpha}^T=E_{i,i+n}.\]

\subsection{Definitions and statement of the main theorem} \label{sec:quant}
\subsubsection{}
We set up some notation. Let $\omega$ denote the symplectic form on the vector space $V$. Let $L$ be a fixed Lagrangian subspace of $V$. Then, we define the Weyl algebra $W_{2n}$ as the algebra of polynomial differential operators on $L$, also denoted by $\dd(L)$. Explicitly, the Weyl algebra is an associative $\CC$-algebra generated by the variables $x_1,x_2,\dots,x_n,y_1,y_2,\dots,y_n$ that satisfy the following relations:
\[[x_i,x_j]=0, [y_i,y_j]=0, [y_i,x_j]=\delta_{i,j}, 1\leq i,j \leq n.\]
Here, the $x_i$'s correspond to a choice of coordinates on the vector space $L^*$ and and the $y_i$'s represent the partial derivatives $\partial_{x_i}$'s with respect to $x_i$'s. We have a direct sum decomposition $W_{2n}=W_{2n,even}\oplus W_{2n,odd}$ as vector spaces, where $W_{2n,even}$ is the space spanned by all the monomials in the $x_i$'s and $y_i$'s having even total degree and $W_{2n,odd}$ is the space spanned by the monomials having odd total degree. It is clear that $W_{2n,even}$ is a subalgebra of $W_{2n}$ which is generated by all monomials of the form $x_ix_j, y_iy_j$ and $x_iy_j+y_jx_i$ for $1\leq i,j\leq n$.

Recall from $\mathsection{\ref{sec:class}}$ that corresponding to the $G$ action on $V$, we have a co-moment map:
\[\theta_1:\Gg\lra \CC[V],\]
such that the image of $\theta_1$ in $\CC[V]$ is exactly the vector space of polynomial functions on $V$ having degree $2$.

Next, we note that the algebra $W_{2n}$ is a quantization of $\CC[V]$. To make this statement precise, we define the symmetrization map, which is the following map of vector spaces:
\[\Sym:\CC[V] \lra W_{2n}\]
\[\lambda_1\lambda_2\cdots\lambda_k \mapsto \dfrac{1}{k!}\sum_{\sigma\in S_k} \lambda_{\sigma(1)}\lambda_{\sigma(2)}\cdots\lambda_{\sigma(k)},\]
where each $\lambda_i$ is a linear function on $V$. (Here, we have used that there is a canonical isomorphism of vector spaces $V^* \simeq L\oplus L^*$ using the symplectic form). The map Sym is a vector space isomorphism. Note that both of these spaces have Lie algebra structures, where the Lie bracket on $\CC[V]$ is given by the Poisson bracket and the bracket on $W_{2n}$ is the one induced by the commutator of the associative product. The following lemma follows by a straightforward computation:

\begin{lemma}
The restriction of the map $\Sym$ to the subspace $\CC[V]_{\leq 2}\sub \CC[V]$ of polynomials of degree less than or equal to two is a Lie algebra homomorphism.
\end{lemma}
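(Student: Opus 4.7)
The plan is to verify $\Sym(\{f,g\}) = [\Sym(f), \Sym(g)]$ for all $f, g \in \CC[V]_{\leq 2}$ by reducing via bilinearity to a check on monomials. The space $\CC[V]_{\leq 2}$ is spanned by constants, linear functions on $V$, and products $\lambda\mu$ of pairs of linear functions, so it is enough to verify the identity when $f$ and $g$ each lie in this spanning set. Recall that $\Sym$ is the identity on degree $\leq 1$ and sends $\lambda\mu \mapsto \frac{1}{2}(\lambda\mu + \mu\lambda)$.

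I would first dispose of the easy cases. Constants are central on both sides, so the identity is trivial there. For two linear functions $\lambda, \mu$, the Poisson bracket $\{\lambda, \mu\}$ is a scalar determined by the symplectic form on $V^*$, and by the normalization of $W_{2n}$ (the relation $[y_i, x_j] = \delta_{ij}$) together with the identification $V^* \simeq L \oplus L^*$ coming from $\omega$, this scalar agrees with $[\lambda, \mu] \in W_{2n}$. For $f = \lambda\mu$ quadratic and $g = \nu$ linear, applying the Leibniz rule to both sides reduces the claim to the linear-linear case, using that internal commutators of linear functions are scalars and hence commute freely past everything.

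The main computation, which I expect to carry most of the work, is the case $f = \lambda\mu$, $g = \nu\rho$. A double Leibniz expansion of the Poisson bracket gives
\[\{\lambda\mu, \nu\rho\} = \{\mu,\rho\}\lambda\nu + \{\mu,\nu\}\lambda\rho + \{\lambda,\rho\}\mu\nu + \{\lambda,\nu\}\mu\rho,\]
so $\Sym(\{f,g\})$ is obtained by replacing each product $ab$ above with $\frac{1}{2}(ab+ba)$. For the commutator side, the key observation is that since commutators between linear functions in $W_{2n}$ are central scalars, one has
\[[\lambda\mu, \nu\rho] = [\mu\lambda, \nu\rho] = [\lambda\mu, \rho\nu] = [\mu\lambda, \rho\nu];\]
expanding the outer commutator in $[\Sym(f), \Sym(g)]$ and invoking these equalities collapses it to $[\lambda\mu, \nu\rho]$, which a direct Leibniz expansion computes as $\{\mu,\rho\}\lambda\nu + \{\mu,\nu\}\lambda\rho + \{\lambda,\rho\}\nu\mu + \{\lambda,\nu\}\rho\mu$. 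The difference between this expression and $\Sym(\{f,g\})$ is then a sum of four terms of the form $\{a,b\}\{c,d\}$, which cancels in pairs because scalars commute.

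The main obstacle is simply the combinatorial bookkeeping of the quadratic-quadratic case, and its success hinges on the fact that all reordering discrepancies in degree-$2$ monomials of $W_{2n}$ are scalar-valued, and these scalars recombine antisymmetrically. This is precisely why the homomorphism property holds only in degree $\leq 2$: in higher degrees, reorderings produce genuinely operator-valued corrections that cannot be absorbed into the symmetrization.
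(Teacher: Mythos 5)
Your proof is correct, and it is a clean, complete rendering of the ``straightforward computation'' the paper merely asserts without detail. The organization via bilinearity and Leibniz, together with the key observation that commutators of linear elements of $W_{2n}$ are central scalars so that $[\Sym(\lambda\mu),\Sym(\nu\rho)]$ collapses to $[\lambda\mu,\nu\rho]$ and the reordering corrections in $\Sym(\{\lambda\mu,\nu\rho\})$ cancel in pairs, is exactly the standard and expected argument.
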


Now, we define the composition:
\[\Theta_1:=\Sym\circ\text{ }\theta_1:\Gg\lra W_{2n}.\]
Then, $\Theta_1$ is a Lie algebra homomorphism, and so it induces an algebra homomorphism $\uu\Gg\ra W_{2n}$, which we also denote by $\Theta_1$. Note that the image of $\Theta_1$ lies in the even subalgebra $W_{2n,even}$. Define $\kk:=\ker(\Theta_1)\sub \uu\Gg$.

Viewing the Lie algebra $\mathfrak{sp}(V)=\spp_{2n}$ as a subspace of $\gl_{2n}$ using the embedding defined in the previous section, the map $\Theta_1$ can be written in terms of coordinates via the formula:
\[\begin{pmatrix}A & B\\ C & -A^T\end{pmatrix} \mapsto \dfrac{1}{2}\Bigg(\sum_{i,j=1}^n (2a_{i,j}x_iy_j+b_{i,j}x_ix_j-c_{i,j}y_iy_j) + \Tr(A)\Bigg),\]
where $A=(a_{i,j}), B=(b_{i,j})$ and $C=(c_{i,j})$.

Next, we define the action of $\Gg$ on the space $\dd(\Gg)$ of polynomial differential operators on $\Gg$. For this, fix $x\in \Gg$ and consider the linear map:
\[\ad_x:\Gg\lra\Gg\]
\[y\mapsto[x,y].\]
Using the identification $\Gg\simeq \Gg^*$, the assignment $x\mapsto \ad_x$ gives a Lie algebra homomorphism $\Gg\ra \End(\Gg^*)$, where $\End(\Gg^*)$ denotes the space of vector space endomorphisms of $\Gg^*$. Any element in $\End(\Gg^*)$ can be uniquely extended to a derivation on $\Sym(\Gg^*)\simeq\CC[\Gg]$ via Leibniz rule. Since $\Der(\CC[\Gg])\sub\dd(\Gg)$, we have constructed a map:
\[\Theta_0:\Gg\lra \dd(\Gg),\]
which is a Lie algebra homomorphism. Hence, this induces an algebra homomorphism $\uu\Gg\ra \dd(\Gg)$, which we also denote by $\Theta_0$. Then, given $x\in\Gg$ and $d\in\dd(\Gg)$, the action of $\Gg$ on $\dd(\Gg)$ is defined via:
\[x\cdot d:=[\Theta_0(x),d].\]

We now define the algebras we will be working with:

\begin{itemize}
\item Consider the algebra $\dd(\Gg)\otimes W_{2n}$. We have the diagonal $\Gg$-action on this algebra: Given $d\otimes w\in \dd(\Gg)\otimes W_{2n}$ and $x\in \Gg$, we define the action via:
\[x\cdot(d\otimes w):=[\Theta_0(x),d]\otimes w + d\otimes[\Theta_1(x),w].\] 
Let $\Theta_2:\Gg\ra \dd(\Gg)\otimes W_{2n}$ be the Lie algebra homomorphism defined via $\Theta_2=\Theta_0\otimes 1+1\otimes\Theta_1$. This can be extended to get an algebra homomorphism $\Theta_2:\uu\Gg\ra\dd(\Gg)\otimes W_{2n}$, which is the co-moment map for the above action. Then, we have the quantum Hamiltonian reduction $\Big((\dd(\Gg)\otimes W_{2n})/(\dd(\Gg)\otimes W_{2n})\cdot\Theta_2(\Gg)\Big)^{\Gg}$ of $\dd(\Gg)\otimes W_{2n}$ at the augmentation ideal $\uu\Gg^+$ of $\uu\Gg$.

\item Define $\Jj\sub \dd(\Gg)$ to be the left ideal generated by the image of $\kk\sub \uu\Gg$ under the co-moment map $\Theta_0$, that is, $\Jj:=\dd(\Gg)\cdot(\Theta_0(\ker(\Theta_1)))$. Then, we have the algebra $\Big(\dd(\Gg)/\Jj\Big)^{\Gg}$ which is the quantum Hamiltonian reduction of the algebra $\dd(\Gg)$ at the ideal $\kk \sub \uu\Gg$.

\item Finally, we consider the spherical subalgebra $eH_ce$ of the rational Cherednik algebra $H_c$ with the parameter $c$ given by $c=(c_{long},c_{short})=(-1/4,-1/2).$
\end{itemize}

Our goal is to prove Theorem~\ref{theo:quant} by constructing algebra isomorphisms:

\[\Psi:\Big(\dd(\Gg)/\Jj\Big)^{\Gg}\lra eH_ce,\]
\[\Phi:\Big(\dd(\Gg)/\Jj\Big)^{\Gg}\lra\Big((\dd(\Gg)\otimes W_{2n})/(\dd(\Gg)\otimes W_{2n})\cdot\Theta_2(\Gg)\Big)^{\Gg}.\]
These isomorphisms are established in Theorems~\ref{theo:psi} and \ref{theo:phi} respectively.

\subsubsection{}
We define some filtrations on the algebras we will be working with. On the algebra $\dd(\Gg)$, we have an increasing filtration given by the order of the differential operators. The associated graded with respect to this filtration is given by $\gr(\dd(\Gg))\simeq \CC[T^*\Gg]\simeq \CC[\Gg\times\Gg],$ identifying $\Gg^*$ with $\Gg$ using the trace pairing. Similarly, we have a filtration on the algebra $\dd(\Hh)$ by the order of differential operators and $\gr(\dd(\Hh))\simeq\CC[\Hh\times \Hh]$.

On the even part of the Weyl algebra $W_{2n,even}$, we define a version of the Bernstein filtration. Note that the algebra $W_{2n,even}$ is generated by elements of the form $x_ix_j, y_iy_j$ and $x_iy_j+y_jx_i$. We define the filtration on $W_{2n,even}$ by placing each of these elements in degree 1. Thus, any element which is a product of $2m$ of the $x_i$'s and $y_i$'s lies in the $m^{th}$ filtered piece. Under this filtration, we have the associated graded $\gr(W_{2n,even})=\CC[V]_{even}$.

Using the two filtrations above, we also get a tensor product filtration on $\dd(\Gg)\otimes W_{2n,even}$ such that the associated graded is $\gr(\dd(\Gg)\otimes W_{2n,even}) = \CC[\Gg\times \Gg]\otimes \CC[V]_{even}$. Next, on the algebra $\uu\Gg$, we have the PBW filtration, and so by PBW theorem, we have the associated graded with respect to this filtration $\gr(\uu\Gg)\simeq\Sym\Gg$.
 
\begin{remark}
Note that the $\Gg$-action on each of these algebras is filtration preserving, and so, as $\Gg$ is reductive, taking $\Gg$-invariants commutes with taking associated graded.
\end{remark}

Finally, the Cherednik algebra $H_c$ has a filtration such that all the elements of $W$ and the generators of $\Sym(\Hh^*)$ have degree zero, and the generators of $\Sym(\Hh)$ have degree one. This also induces a filtration on the spherical subalgebra $eH_ce$. By Proposition~\ref{prop:PBW}, we have the associated graded $\gr(eH_ce)=\CC[\Hh\times\Hh]^W.$
 
Before moving on to the construction of the maps $\Phi$ and $\Psi$, we prove a lemma that relates the ideals in the non-commutative setting with the ideals in the commutative setting:
 
\begin{lemma} \label{lem:crugr}
We have an inclusion of ideals $J\sub\gr(\Jj)$, where $J\sub\CC[\Gg\times\Gg]$ is the defining ideal of pairs of matrices whose commutator has rank less than or equal to 1.
\end{lemma}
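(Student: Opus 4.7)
The plan is to reduce the inclusion $J\subseteq\gr(\Jj)$ to the statement that each quadratic generator of the classical ideal $K$ lifts to an element of $\kk$ of the same PBW-degree, and then to transfer the lift through $\Theta_0$. Since $\gr(\Jj)$ is a two-sided ideal of $\gr(\dd(\Gg))=\CC[\Gg\times\Gg]$, it suffices to check $\theta_0(k)\in\gr(\Jj)$ for a generating set of $J$. By Corollary~\ref{cor:classinv}, $K=\ker(\theta_1)\subseteq\CC[\Gg]$ is generated by the $2\times 2$ minors of a generic matrix in $\Gg$; these lie in $\Sym^2\Gg$ and their $\theta_0$-images are precisely the $2\times 2$ minors of $[x,y]$, which generate $J$. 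The task therefore reduces to producing, for each $k\in K\cap\Sym^2\Gg$, an element $\widetilde k\in\kk\cap(\uu\Gg)^{\leq 2}$ with PBW-symbol $\sigma_2(\widetilde k)=k$. Indeed, $\Theta_0$ is filtration preserving with associated graded $\theta_0$, so $\Theta_0(\widetilde k)\in\Jj$ has order $\leq 2$ with principal symbol $\theta_0(k)$, giving $\theta_0(k)\in\gr_2(\Jj)\subseteq\gr(\Jj)$.

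To produce the lift, pick any $\widetilde k_0\in(\uu\Gg)^{\leq 2}$ with $\sigma_2(\widetilde k_0)=k$. Since $\Theta_1$ is filtration preserving with associated graded $\theta_1$, the degree-$2$ symbol of $\Theta_1(\widetilde k_0)$ is $\theta_1(k)=0$, and hence $\Theta_1(\widetilde k_0)\in W_{2n,even}^{\leq 1}$. The key point is the vector space decomposition
\[ W_{2n,even}^{\leq 1}\;=\;\CC\cdot 1\;\oplus\;\Theta_1(\Gg). \]
By definition, $W_{2n,even}^{\leq 1}$ is the linear span of $1$ and the degree-$2$ generators $x_ix_j,\ y_iy_j,\ x_iy_j+y_jx_i$, a space of dimension $1+n(2n+1)$. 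A short computation with the explicit coordinate formula for $\Theta_1$ shows that the constant $\tfrac{1}{2}\Tr(A)$ in the formula cancels the constants produced when one rewrites $x_iy_j$ in terms of the symmetrised generator $x_iy_j+y_jx_i$, so $\Theta_1(\Gg)$ in fact lies in the span of the three families of degree-$2$ generators. Moreover $\Theta_1|_\Gg$ is injective, because on taking principal symbols one reduces to the injectivity of $\theta_1|_\Gg$, which is immediate from the faithfulness of the $\Gg$-action on $V$. Since $\dim\Theta_1(\Gg)=\dim\Gg=n(2n+1)$ equals the dimension of the span of the degree-$2$ generators, and $\Theta_1(\Gg)\cap\CC\cdot 1=0$ (by the same symbol argument), the direct sum follows. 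We may therefore write uniquely $\Theta_1(\widetilde k_0)=c\cdot 1+\Theta_1(z)$ for some $c\in\CC,\ z\in\Gg$. Setting $\widetilde k:=\widetilde k_0-z-c\cdot 1$ gives $\Theta_1(\widetilde k)=0$, so $\widetilde k\in\kk$; since $z$ and $c\cdot 1$ lie in $(\uu\Gg)^{\leq 1}$, we also have $\sigma_2(\widetilde k)=\sigma_2(\widetilde k_0)=k$, completing the lift.

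The only delicate step is the direct-sum identification $W_{2n,even}^{\leq 1}=\CC\cdot 1\oplus\Theta_1(\Gg)$: it is precisely what guarantees that the Weyl-algebra error term produced by an arbitrary quadratic lift can be corrected by elements of PBW-degree $\leq 1$, which are invisible to $\sigma_2$. Once this is in hand, applying $\Theta_0$ to the corrected lift deposits each generator $\theta_0(k)$ of $J$ into $\gr_2(\Jj)$, and the ideal property of $\gr(\Jj)$ in $\CC[\Gg\times\Gg]$ yields the desired inclusion $J\subseteq\gr(\Jj)$.
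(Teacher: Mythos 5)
Your proof is correct and rests on the same key computation as the paper's, namely that $\Theta_1(\Gg)$ is exactly the span of the degree-one filtered generators $x_ix_j$, $y_iy_j$, $x_iy_j+y_jx_i$ of $W_{2n,even}$, so that $\Theta_1\big((\uu\Gg)^{\leq 1}\big)=W_{2n,even}^{\leq 1}$. The paper packages this as strict exactness of the filtered sequence $0\to\ker\Theta_1\to\uu\Gg\to W_{2n,even}\to 0$ to obtain $\gr(\ker\Theta_1)=\ker\theta_1$ in all degrees, whereas you unfold the same observation into an explicit degree-two lifting of the quadratic generators of $K$ into $\kk$, which is all the lemma requires.
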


Recall from the definitions given in $\mathsection\ref{sec:class}$ that the ideal $J$ is generated by all $2\times 2$ minors of the commutator $[x,y]$ for $(x,y)\in\Gg\times \Gg$.

\begin{proof}
First, we describe the associated graded versions of the maps $\Theta_1$ and $\Theta_0$, in order to compute $\gr(\Jj)$. The map $\Theta_1:\uu\Gg \ra W_{2n,even}\sub W_{2n}$ was defined on $\Gg\sub\uu\Gg$ as the composition $\Sym \circ\text{ }\theta_1$. Then, this is a filtration-preserving map, and we have:
\[\theta_1=\gr(\Theta_1):\Sym\Gg = \gr(\uu\Gg) \lra \gr(W_{2n,even})=\CC[V]_{even}\sub \CC[V].\]
This induces a map $V=\Spec(\CC[V])\ra\Spec(\Sym \Gg))=\Gg^*\simeq \Gg$ that sends a vector $v$ to the rank one endomorphism $v^2\in\Gg$. By Lemma~\ref{lem:rank1}, we know that the ideal $\ker(\theta_1)$ is generated by all $2\times 2$ minors in the entries of $\Gg$.

Next, we consider the map $\Theta_0:\uu\Gg\ra\dd(\Gg)$. In this case, we have the associated graded map:
\[\theta_0=\gr(\Theta_0):\Sym\Gg = \gr(\uu\Gg) \lra \gr(\dd(\Gg)) = \CC[\Gg\times \Gg].\]
This is the co-moment map for the diagonal adjoint action of $G$ on $\Gg\times \Gg$. This morphism $\theta_0$ induces a map $\Gg\times\Gg = \Spec(\CC[\Gg\times\Gg])\ra\Spec(\Sym\Gg)= \Gg^* \simeq \Gg$ sending a pair $(x,y)$ to the commutator $[x,y]$.

Hence, if we consider the ideal generated by the image of $\ker(\theta_1)$ under the map $\theta_0$, then this is exactly generated by the $2\times 2$ minors of the commutator $[x,y]$ for $(x,y)\in \Gg\times\Gg$. But these are exactly the generators of the ideal $J$, and so $J=\CC[\Gg\times\Gg]\cdot\theta_0(\ker(\theta_1))$. Also, by definition, $\Jj= \dd(\Gg)\cdot\Theta_0(\ker(\Theta_1))$.  Hence, to prove that $J\sub\gr(\Jj)$, it suffices to show that $\gr(\ker(\Theta_1))=\ker(\theta_1)$. It is clear that $\gr(\ker(\Theta_1))\sub \ker(\theta_1)$.

To see that the inclusion is an equality, we first describe the images of the maps $\theta_1$ and $\Theta_1$. We have that $\theta_1(\Gg)$ is exactly the space of degree $2$ polynomials in $\CC[V]$, and so, $\im(\theta_1)$ is equal to the subalgebra $\CC[V]_{even}$ of all polynomials having even total degree. Applying the symmetrization map, we see that $\im(\Theta_1)$ is exactly $W_{2n,even}$. Thus, we have a short exact sequence of filtered $\uu\Gg$-modules:
\[\begin{tikzcd}
0\arrow[r] & \ker(\Theta_1)\arrow[r] & \uu\Gg \arrow[r] & W_{2n,even} \arrow[r]& 0.
\end{tikzcd}\]
We claim that the filtrations on $\ker(\Theta_1)$ and $W_{2n,even}$ are exactly the ones induced on them by $\uu\Gg$. For $\ker(\Theta_1)$, this is true by definition. For $W_{2n,even}$, this follows by observing that the function $\Theta_1$ maps $\Gg\sub \uu\Gg$ to exactly the space spanned by the elements $x_ix_j, y_iy_j$ and $x_iy_j+y_jx_i$. Hence, this is a strict exact sequence of filtered modules, and so, by \cite[Lemma 1]{Gun}, we can take the associated graded to get an exact sequence of $\gr(\uu\Gg)=\Sym\Gg$-modules:
\[\begin{tikzcd}
0\arrow[r] & \gr(\ker(\Theta_1))\arrow[r] \arrow[d, hook] & \gr(\uu\Gg) \arrow[r] \arrow[d, equal] & \gr(W_{2n,even}) \arrow[r] \arrow[d, equal] & 0\\
0\arrow[r] & \ker(\theta_1)\arrow[r]
& \Sym\Gg \arrow[r] & \CC[V]_{even} \arrow[r] & 0\\
\end{tikzcd}.\]
This implies that $\gr(\ker(\Theta_1))=\ker(\theta_1)$, completing the proof.
\end{proof}

\begin{remark}
The ideal $\ker(\Theta_1)$ is a primitive ideal of $\uu\Gg$ and the equality $\gr(\ker(\Theta_1))=\ker(\theta_1)$ can also be seen in a more general setting in the works of Joseph, where he constructs minimal realizations of simple Lie algebras in the Weyl algebra (see \cite{Jo1}, \cite{Jo2}). This ideal is often referred to as the Joseph ideal in the literature.
\end{remark}

\subsection{Construction of the isomorphism $\Psi$} \label{sec:isoone}

In this section, we construct the isomorphism:
\[\Psi:\Big(\dd(\Gg)/\Jj\Big)^{\Gg} \lra eH_ce.\]

For this, we recall the map $\Theta_1:\Gg\ra W_{2n}$ defined in the previous section. The Weyl algebra $W_{2n}$ is the space of polynomial differential operators on the vector space $L$, and we have fixed coordinates $x_1, x_2, \dots, x_n$ on $L^*$. Let $\mathfrak{V}$ be the vector space spanned by all expressions of the form $(x_1x_2\cdots x_n)^{-1/2}\cdot P$, where $P$ is a Laurent polynomial in $x_1, x_2, \dots, x_n$, i.e. $P\in \CC[x_1^{\pm 1}, x_2^{\pm1}, \cdots, x_n^{\pm 1}]$. Then, the map $\Theta_1$ gives an action of $\Gg$ on $\mathfrak{V}$, where any element $f\in \Gg$ acts on $\mathfrak{V}$ by $\Theta_1(f)\in W_{2n}$ via formal differentiation of Laurent polynomials.

Recall that the Cartan $\Hh \sub \Gg$ is spanned by elements of the form $E_{i,i} - E_{n+i,n+i}$ for $1\leq i\leq n$. Then, under $\Theta_1$, the image of this element is the differential operator $(x_iy_i+y_ix_i)/2 = x_iy_i + \frac{1}{2} = x_i\partial_{x_i}+ \frac{1}{2}$. Hence, the $\Gg$-action on $\mathfrak{V}$ has a one-dimensional zero weight space $\mathfrak{V}\langle 0\rangle$ spanned by the element $(x_1x_2\cdots x_n)^{-1/2}\cdot 1$. Then, using the construction of the radial parts homomorphism in $\mathsection\ref{sec:RCA}$, we get an algebra homomorphism:
\[\Psi: \dd(\Gg)^{\Gg} \lra \dd(\Hh^{reg})^W.\]

Let $\Delta_{\Gg}$ and $\Delta_{\Hh}$ be the Laplacian operators on $\Gg$ and $\Hh$ respectively. Recall from $\mathsection\ref{sec:RCA}$ the Caloger-Moser differential operator $L_c$ for $c=(c_{long},c_{short})=(-1/4,-1/2)$ and let $\mathcal{C}_c$ denote the centralizer of $L_c$ in $\dd(\Hh^{reg})^W$. Also, let $\Sym(\Gg) \sub \dd(\Gg)$ denote the subalgebra of differential operators on $\Gg$ with constant coefficients.

The computations in the following lemma are the origin of the precise value of our parameter $c$.

\begin{lemma} \label{lem:bestlem}
\begin{enumerate} [label=(\alph*)]
\item We have the equality:
\[\Psi(\Delta_{\Gg})=L_c.\]
\item The map $\Psi$ induces an isomorphism:
\[\Sym(\Gg)^{\Gg} \stackrel{\sim}{\lra} C_c.\]
\end{enumerate}
\end{lemma}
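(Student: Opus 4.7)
For part (a), my plan is to compute $\Psi(\Delta_\Gg)$ directly. Writing $\Psi = \chi \circ \Psi_{\mathrm{univ}}$, where $\Psi_{\mathrm{univ}} \colon \dd(\Gg^{reg})^\Gg \xrightarrow{\sim} (\dd(\Hh^{reg}) \otimes (\uu\Gg)_\Hh)^W$ is the universal Harish-Chandra isomorphism from $\mathsection\ref{sec:RCA}$ and $\chi \colon (\uu\Gg)_\Hh \to \CC$ is the character determined by the one-dimensional zero-weight line $\CC \cdot (x_1 \cdots x_n)^{-1/2} \subset \mathfrak{V}$, the explicit formula from \cite{EG} expresses $\Psi_{\mathrm{univ}}(\Delta_\Gg)$ as $\Delta_\Hh \otimes 1$ minus a sum of the form $\sum_{\alpha \in R^+} \frac{1}{\alpha^2} \otimes (e_\alpha e_{-\alpha} + e_{-\alpha} e_\alpha)$ in the normalization of \cite{EG}. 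Applying $\chi$, the problem then reduces to computing the scalar $\chi(e_\alpha e_{-\alpha} + e_{-\alpha} e_\alpha)$ for every positive root $\alpha$.

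Using the coordinate formula for $\Theta_1$ from $\mathsection\ref{sec:quant}$ together with the explicit expressions for the $e_\alpha$ at the end of $\mathsection\ref{sec:RCA}$, I would compute these scalars root by root, in three families. For the long root $\alpha = \sqrt{2}\,r_i$ one has $e_\alpha = E_{i,i+n}$, so $\Theta_1(e_\alpha) = \tfrac12 x_i^2$ and $\Theta_1(e_{-\alpha}) = -\tfrac12 y_i^2$; combining the Euler identity $x_i y_i \cdot (x_1 \cdots x_n)^{-1/2} = -\tfrac12 (x_1 \cdots x_n)^{-1/2}$ with the Weyl relation $[y_i, x_i^2] = 2 x_i$ and a short computation yields $\chi(e_\alpha e_{-\alpha} + e_{-\alpha} e_\alpha) = -3/8$. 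For the short roots $\alpha = (r_i \pm r_j)/\sqrt{2}$, the vectors $\Theta_1(e_{\pm\alpha})$ are proportional to $x_i x_j$, $y_i y_j$, or $x_i y_j$, and the analogous Euler computation gives the scalar $-1/4$ in both cases. Plugging these values back in and comparing with the definition of $L_c$ in $\mathsection\ref{sec:RCA}$, the long-root coefficients match precisely when $c_{long}(c_{long}+1) = -3/16$, forcing $c_{long} = -1/4$, while the short-root coefficients match precisely when $c_{short}(c_{short}+1) = -1/4$, forcing $c_{short} = -1/2$, which proves (a).

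For part (b), I would first observe that $\Sym(\Gg)^\Gg$ is a commutative subalgebra of $\dd(\Gg)^\Gg$, so $\Psi(\Sym(\Gg)^\Gg) \subset \dd(\Hh^{reg})^W$ is commutative and, by part (a), centralizes $L_c$. Since an element of $\Sym^d(\Gg)$ is a constant-coefficient operator of order $d$ and $\CC^*$-degree $-d$, and $\Psi$ is built in a $\CC^*$-equivariant way in \cite{EG}, the images satisfy $\deg + \ord \leq 0$, so the map factors through $\cc_c$. To prove the induced algebra map $\Sym(\Gg)^\Gg \to \cc_c$ is an isomorphism, my plan is to pass to associated gradeds. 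On the source, $\gr \Sym(\Gg)^\Gg = \CC[\Gg^*]^\Gg$ and $\gr \Psi$ restricted to this subalgebra is, by inspection of the construction of $\Psi_{\mathrm{univ}}$ in \cite{EG}, the classical Chevalley restriction isomorphism $\CC[\Gg^*]^\Gg \xrightarrow{\sim} \CC[\Hh^*]^W$. On the target, $\gr \cc_c$ is the Poisson centralizer of $\sigma(L_c) = \sum_i \xi_i^2 \in \CC[T^*\Hh^{reg}]^W$ inside the appropriate piece of $\gr \dd(\Hh^{reg})^W_-$, which by the classical integrability of the rational Calogero--Moser system (see \cite{EG, OP}) is identified with $\CC[\Hh^*]^W$. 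These two identifications are compatible, so the standard filtered-to-graded argument lifts the isomorphism $\gr \Psi |_{\Sym(\Gg)^\Gg}$ to the desired isomorphism $\Sym(\Gg)^\Gg \xrightarrow{\sim} \cc_c$.

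The main obstacle I anticipate in part (a) is pinning down the precise normalization of the universal radial-parts formula: the $(\alpha,\alpha)$ factors and the summation convention (over $R$ versus $R^+$) must be aligned with the defining formula for $L_c$, and a small arithmetic mistake here would shift the final values of $c_{long}$ and $c_{short}$. Once the formula is correctly quoted, the root-by-root calculation is essentially mechanical in the Weyl algebra. For part (b), the non-elementary ingredient is the identification $\gr \cc_c = \CC[\Hh^*]^W$, which I would invoke as a known consequence of Calogero--Moser integrability rather than reprove from scratch.
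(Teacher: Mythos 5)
Your proposal follows essentially the same route as the paper: for part (a) the paper also invokes the universal radial-parts formula of Proposition~6.2 of \cite{EG}, written as $\Psi(\Delta_\Gg)=\Delta_\Hh-\sum_{\alpha\in R}\frac{e_\alpha e_{-\alpha}}{\alpha^2}$ (over all of $R$, which is equivalent to your $R^+$-sum with the symmetrized product since $e_\alpha e_{-\alpha}$ and $e_{-\alpha}e_\alpha$ act by the same scalar on the zero-weight line), and then evaluates $\Theta_1(e_\alpha)\Theta_1(e_{-\alpha})$ on $(x_1\cdots x_n)^{-1/2}$ root by root, getting $-3/16$ for long and $-1/8$ for short roots, matching your arithmetic. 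For part (b) the paper simply cites the proof of Proposition~7.2 of \cite{EG} as valid for any reductive $\Gg$, whereas you unfold that argument (associated-graded sandwich via the Chevalley restriction isomorphism and Calogero--Moser integrability); your sketch is correct and is essentially the content of the cited proof, just written out rather than referenced.
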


\begin{proof}
\begin{enumerate} [label=(\alph*)]
\item By Proposition 6.2 of \cite{EG}, we have:
\[\Psi(\Delta_{\Gg})=\Delta_{\Hh} - \sum_{\alpha\in R}\dfrac{e_{\alpha}\cdot e_{-\alpha}}{\alpha^2}.\]
We will evaluate $e_{\alpha}\cdot e_{-\alpha}|_{\mathfrak{V}\langle0\rangle}$ for the long roots and the short roots separately:
\begin{enumerate} [label=(\roman*)]
\item Suppose $\alpha$ is a long root. (For such roots, $(\alpha,\alpha)=2$.) Hence, $\alpha=\sqrt2 r_i$ for some $i$, where $r_i=(E_{i,i}-E_{i+n,i+n})/\sqrt{2}$. Then, $e_{\alpha}=e_{-\alpha}^T=E_{i,i+n}$, which implies that:
\[\Theta_1(e_{\alpha}\cdot e_{-\alpha}) = \dfrac{({x_i}^2)\cdot(-\partial_{x_i}^2)}{4}.\]
It is straightforward to see that this operator acts by $\frac{-3}{16}\Id_{\mathfrak{V}\langle0\rangle}=c_{long}(c_{long}+1)\Id_{\mathfrak{V}\langle0\rangle}$ on the space spanned by $(x_1x_2\cdots x_n)^{-1/2}\cdot 1$.
\item Suppose $\alpha$ is a short root. (For such roots, $(\alpha,\alpha)=1$.) Then, $\alpha=(r_i+ r_j)/\sqrt 2$ or $\alpha=(r_i- r_j)/\sqrt 2$ for some $i\neq j$. In the first case, $e_{\alpha}=e_{-\alpha}^T=(E_{i,j+n}+E_{j,i+n})/\sqrt{2}$, and so:
\[\Theta_1(e_{\alpha}\cdot e_{-\alpha}) = \dfrac{({x_i}{x_j})\cdot(-\partial_{x_i}\partial_{x_j})}{2}.\]
In the second case, $e_{\alpha}=e_{-\alpha}^T=(E_{i,j}+E_{j+n,i+n})/\sqrt{2}$, giving that:
\[\Theta_1(e_{\alpha}\cdot e_{-\alpha}) = \dfrac{(x_i\partial_{x_j})\cdot(x_j\partial_{x_i})}{2}.\]
Then, in either case, the differential operator acts by $\frac{-1}{8}\Id_{\mathfrak{V}\langle0\rangle}=\frac{1}{2}c_{short}(c_{short}+1)\Id_{\mathfrak{V}\langle0\rangle}$ on the space spanned by $(x_1x_2\cdots x_n)^{-1/2}\cdot 1$.
\end{enumerate}

Hence, we conclude that:
\[\Psi(\Delta_{\Gg})=\Delta_{\Hh}-\dfrac{1}{2}\sum_{\alpha\in R}\dfrac{c(\alpha)(c(\alpha)+1)}{\alpha^2}\cdot(\alpha,\alpha),\]
which is exactly the Calogero-Moser operator $L_c$ of Type $C$ for the parameter $c$.

\item This follows from the proof of Proposition 7.2 of \cite{EG}, which works for any reductive Lie algebra $\Gg$.
\end{enumerate}
\end{proof}

By the above lemma, we see that $\mathcal{C}_c \sub \im(\Psi)$. Next, the restriction of $\Psi$ to $\CC[\Gg]^{\Gg} \sub \dd(\Gg)^{\Gg}$ is exactly the Chevalley restriction map $\CC[\Gg]^{\Gg}\ra \CC[\Hh]^W\sub \dd(\Hh^{reg})$, and so, $\CC[\Hh]^W \sub \im(\Psi)$. Now, by Theorem~\ref{theo:genRCA}, the image of the spherical subalgebra $eH_ce$ under the Dunkl homomorphism $\Theta:eH_ce\ra \dd(\Hh^{reg})^W$ is exactly the subalgebra of $\dd(\Hh^{reg})^W$ generated by $\mathcal{C}_c$ and $\CC[\Hh]^W$. Hence, we have the following corollary:

\begin{corollary} \label{cor:Dunkl}
We have the inclusion of algebras $\Theta(eH_ce)\sub \im(\Psi)$.
\end{corollary}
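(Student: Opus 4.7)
The plan is to exploit the characterization of $\Theta(eH_ce)$ given by Theorem~\ref{theo:genRCA}, namely that $\Theta(eH_ce)$ coincides with the subalgebra $\bb_c \sub \dd(\Hh^{reg})^W$ generated by $\mathcal{C}_c$ together with $\CC[\Hh]^W$. Since $\im(\Psi)$ is itself a subalgebra of $\dd(\Hh^{reg})^W$, it suffices to verify that both of these generating sets lie inside $\im(\Psi)$; the inclusion of the generated subalgebra then follows automatically.

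First I would handle the $\mathcal{C}_c$ part. This is exactly the content of Lemma~\ref{lem:bestlem}(b): the map $\Psi$ induces an isomorphism $\Sym(\Gg)^{\Gg} \stackrel{\sim}{\lra} \mathcal{C}_c$, so in particular every element of $\mathcal{C}_c$ is in the image of $\Psi$ restricted to the constant-coefficient invariant differential operators on $\Gg$.

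Next I would handle the $\CC[\Hh]^W$ part. The key observation is that $\Psi$ restricts to an algebra homomorphism on the subalgebra $\CC[\Gg]^{\Gg} \sub \dd(\Gg)^{\Gg}$ of $G$-invariant polynomial functions on $\Gg$, and this restriction coincides with the classical Chevalley restriction map $\CC[\Gg]^{\Gg} \twoheadrightarrow \CC[\Hh]^W$. (This can be read off from the formula in \cite[Proposition 6.1]{EG} that is used to construct $\Psi$, since $\Psi$ is built out of the isomorphism $\dd(\Gg^{reg})^{\Gg} \simeq (\dd(\Hh^{reg})\otimes (\uu\Gg)_{\Hh})^W$, which restricts to Chevalley restriction on zeroth-order operators.) Since Chevalley restriction is surjective, this places $\CC[\Hh]^W$ inside $\im(\Psi)$.

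Combining these two observations, $\im(\Psi)$ contains both $\mathcal{C}_c$ and $\CC[\Hh]^W$; being an algebra, it therefore contains the subalgebra they generate, which by Theorem~\ref{theo:genRCA} is precisely $\Theta(eH_ce)$. There is no real obstacle here beyond confirming the identification of $\Psi|_{\CC[\Gg]^{\Gg}}$ with the Chevalley restriction, which is the mildest piece of bookkeeping in the construction of the radial parts map.
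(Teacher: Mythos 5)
Your proposal is correct and follows essentially the same argument as the paper: the paper also deduces $\mathcal{C}_c \sub \im(\Psi)$ from Lemma~\ref{lem:bestlem}(b), identifies $\Psi|_{\CC[\Gg]^{\Gg}}$ with the Chevalley restriction map to get $\CC[\Hh]^W \sub \im(\Psi)$, and then invokes Theorem~\ref{theo:genRCA} to conclude that the subalgebra generated by these two pieces is $\Theta(eH_ce)$.
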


Next, let $\Ann(\mathfrak{V})\sub \uu\Gg$ denote the annihilator of the representation $\mathfrak{V}$. Then, as remarked in $\mathsection\ref{sec:RCA}$, we have that $(\dd(\Gg)\cdot \ad(\Ann(\mathfrak{V})))^{\Gg} \sub \ker(\Psi)$. Finally, we note that the action of $\Gg$ on $\mathfrak{V}$ was defined via the map $\Theta_1$, and so, $\ker(\Theta_1)\sub \Ann(\mathfrak{V})$, which implies that $\Jj^{\Gg} = (\dd(\Gg)\cdot \Theta_0(\ker(\Theta_1)))^{\Gg} \sub (\dd(\Gg)\cdot \ad(\Ann(\mathfrak{V})))^{\Gg}$.

\begin{theorem} \label{theo:psi}
We have an isomorphism of algebras:
\[\Big(\dd(\Gg)/\Jj\Big)^{\Gg} \simeq eH_ce.\]
\end{theorem}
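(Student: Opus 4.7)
The plan is to descend $\Psi$ to a filtered algebra homomorphism $\bar\Psi\colon (\dd(\Gg)/\Jj)^{\Gg} \to \Theta(eH_ce)\simeq eH_ce$ and then show it is an isomorphism by a filtered-to-graded argument. Well-definedness on the quotient is already recorded in the text: $\Jj^{\Gg}\subseteq (\dd(\Gg)\cdot\ad(\Ann(\mathfrak{V})))^{\Gg}\subseteq\ker(\Psi)$. Equipping $\dd(\Gg)$ and $\dd(\Hh^{reg})$ with their order filtrations makes $\Psi$, and hence $\bar\Psi$, filtration-preserving by construction of the radial parts map. Since $\Gg$ is reductive, $\Gg$-invariants commute with $\gr$, so $\gr\bigl((\dd(\Gg)/\Jj)^{\Gg}\bigr)=(\CC[\Gg\times\Gg]/\gr(\Jj))^{\Gg}$.

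By Lemma~\ref{lem:crugr}, $J\subseteq\gr(\Jj)$, yielding a surjection $q\colon (\CC[\Gg\times\Gg]/J)^{\Gg}\twoheadrightarrow (\CC[\Gg\times\Gg]/\gr(\Jj))^{\Gg}$. Corollary~\ref{lem:combi} provides an isomorphism $\iota\colon \CC[\Hh\times\Hh]^W\xrightarrow{\sim}(\CC[\Gg\times\Gg]/J)^{\Gg}$ whose inverse is restriction to $\Hh\times\Hh$ (the generators of $J$, being minors of commutators, vanish on the abelian $\Hh$). Composing gives a surjection $\beta:=q\circ\iota\colon\CC[\Hh\times\Hh]^W\twoheadrightarrow\gr\bigl((\dd(\Gg)/\Jj)^{\Gg}\bigr)$. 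The crux is to verify that $\gr(\bar\Psi)\circ\beta=\mathrm{id}_{\CC[\Hh\times\Hh]^W}$. Granting this, $\beta$ is injective (hence an isomorphism, so as a bonus $\gr(\Jj)=J$), $\gr(\bar\Psi)$ becomes its inverse, and by Proposition~\ref{prop:PBW} one identifies $\CC[\Hh\times\Hh]^W=\gr(\Theta(eH_ce))$; a standard filtered-graded argument then promotes this to the statement that $\bar\Psi$ is an isomorphism onto $\Theta(eH_ce)\simeq eH_ce$.

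To verify $\gr(\bar\Psi)\circ\beta=\mathrm{id}$, note that $\CC[\Hh\times\Hh]^W$ is generated as an algebra by $\CC[\Hh]^W$ and $\Sym(\Hh)^W$, so it suffices to check on these two subalgebras. For $p\in\CC[\Hh]^W$, the class $\iota(p)$ is represented by the invariant $\tilde p\in\CC[\Gg]^{\Gg}\subseteq\CC[\Gg\times\Gg]^{\Gg}$ obtained from $p$ by inverse Chevalley restriction, and the restriction of $\Psi$ to $\CC[\Gg]^{\Gg}$ is classical Chevalley restriction (multiplication operators have themselves as radial parts), so $\gr(\bar\Psi)(\tilde p)=p$. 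For $q\in\Sym(\Hh)^W$, $\iota(q)$ is represented by the corresponding constant-coefficient invariant $\tilde q\in\Sym(\Gg)^{\Gg}\subseteq\CC[\Gg\times\Gg]^{\Gg}$ under the isomorphism $\Sym(\Hh)^W\simeq\Sym(\Gg)^{\Gg}$; Lemma~\ref{lem:bestlem}(b) gives $\Psi\colon\Sym(\Gg)^{\Gg}\xrightarrow{\sim}\mathcal{C}_c$, and passing to associated graded together with the description $\gr(\Theta)=\text{restriction}$ from Proposition~\ref{prop:PBW} identifies $\gr(\Psi)|_{\Sym(\Gg)^{\Gg}}$ with Chevalley restriction $\Sym(\Gg)^{\Gg}\to\Sym(\Hh)^W$, so $\gr(\bar\Psi)(\tilde q)=q$. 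The most delicate point is this last identification--matching $\gr$ of Lemma~\ref{lem:bestlem}(b) against the graded Dunkl embedding--while the remaining ingredients (reductivity, Lemma~\ref{lem:crugr}, Corollary~\ref{lem:combi}, and filtration formalism) are formal.
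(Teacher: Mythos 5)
Your overall strategy mirrors the paper's (factor $\Psi$ through $\Jj$, then run a filtered-to-graded argument chained through Lemma~\ref{lem:crugr}, Corollary~\ref{lem:combi}, Proposition~\ref{prop:PBW}, Lemma~\ref{lem:bestlem}, and Corollary~\ref{cor:Dunkl}). However, the step you flag as ``formal'' is where the argument breaks.

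You claim that $\CC[\Hh\times\Hh]^W$ is generated as an algebra by $\CC[\Hh]^W$ and $\Sym(\Hh)^W$, so that checking $\gr(\bar\Psi)\circ\beta=\mathrm{id}$ on those two subalgebras suffices. This is false. Already for type $C_1$, where $W=\ZZ/2$ acts on $\Hh=\CC$ by $\pm 1$, one has $\CC[\Hh\times\Hh]^W=\CC[x^2,xy,y^2]$, and $xy$ does not lie in the subalgebra generated by $x^2$ and $y^2$. More generally, the bilinear invariants such as $\sum_i x_iy_i$ are not captured. Note that the generation statement you are implicitly invoking --- that $\CC[\Hh]^W$ and $\Sym(\Hh)^W$ generate $eH_ce$ --- is a statement about the non\-commutative filtered algebra, and it does not pass to the associated graded: commutators of generators drop filtration degree, so their symbols (e.g.\ symbols of $[q,p]$ for $p\in\CC[\Hh]^W$, $q\in\Sym(\Hh)^W$) produce new invariants, precisely the missing cross-terms. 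So agreeing with the identity on $\CC[\Hh]^W$ and $\Sym(\Hh)^W$ does not force agreement on all of $\CC[\Hh\times\Hh]^W$.

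What actually closes the argument --- and what the paper's ``this diagram commutes'' is quietly resting on --- is the general fact that the associated graded of the radial parts map $\Psi$ is the restriction homomorphism $\CC[\Gg\times\Gg^*]^G\to\CC[\Hh^{reg}\times\Hh^*]^W$ (a graded shadow of Proposition~6.1 of \cite{EG}). Once you know $\gr(\Psi)$ is restriction on the nose, the identity $\gr(\bar\Psi)\circ\beta=\mathrm{id}$ follows on all of $\CC[\Hh\times\Hh]^W$ simultaneously, without any appeal to a generating set; your checks on $\CC[\Hh]^W$ and $\Sym(\Hh)^W$ are special cases of this, not a proof of it. A second, smaller inaccuracy: the injectivity of $\beta=q\circ\iota$ only yields $\gr(\Jj)^{\Gg}=J^{\Gg}$ (equality of $\Gg$-invariants inside $\CC[\Gg\times\Gg]^{\Gg}$), not the stronger claim $\gr(\Jj)=J$; the paper's own corollary records only the invariant-level equality.
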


\begin{proof}
The proof of this theorem is based on a commutative diagram that is very similar to the one present in the proof of Theorem 1.3.1 of \cite{GG}.

As noted above, we have $\Jj^{\Gg} \sub (\dd(\Gg)\cdot \ad(\Ann(\mathfrak{V})))^{\Gg}\sub\ker(\Psi)$, and so, the map $\Psi$ can be factored to induce an algebra hommomorphism (which we also denote by $\Psi$):
\[\Psi:\Big(\dd(\Gg)/\Jj\Big)^{\Gg} \lra \dd(\Hh^{reg}).\]

Identifying $\Hh$ with $\Hh^*$ using the trace form, we get an algebra isomorphism $\phi:\CC[\Hh\times\Hh^*]\ra \CC[\Hh\times\Hh]$. Then, we have the following diagram:
\[\begin{tikzcd}[ampersand replacement=\&]
	{\mathbb{C}[\mathfrak{h}\times\mathfrak{h}^*]^W} \&\& {\mathbb{C}[\mathfrak{h}\times\mathfrak{h}]^W} \&\& {\Big(\mathbb{C}[\mathfrak{g}\times\mathfrak{g}]/J\Big)^{\mathfrak{g}}} \\
	{\text{gr}(eH_ce)} \&\&\&\& {\Big(\text{gr}(\mathcal{D}(\mathfrak{g}))/\text{gr}(\mathfrak{J})\Big)^{\mathfrak{g}}} \\
	{\text{gr}(\Theta(eH_ce))} \&\& {\text{gr}\Big(\Psi\Big(\dd(\Gg)/\Jj\Big)^{\Gg}\Big)} \&\& {\text{gr}\Big(\dd(\Gg)/\Jj\Big)^{\mathfrak{g}}}
	\arrow["\sim"', "\phi",from=1-1, to=1-3]
	\arrow["\sim"', "\text{Cor.}~\ref{lem:combi}",from=1-3, to=1-5]
	\arrow["\sim", "\text{Proposition}~\ref{prop:PBW}"', from=1-1, to=2-1]
        \arrow["\sim", "\text{Proposition}~\ref{prop:PBW}"', from=2-1, to=3-1]
	\arrow["\text{Lemma}~\ref{lem:crugr}", two heads, from=1-5, to=2-5]
	\arrow["\text{proj}", two heads, from=2-5, to=3-5]
	\arrow[hook, "\text{Cor.}~\ref{cor:Dunkl}", from=3-1, to=3-3]
	\arrow["\text{gr}\Psi"', , from=3-5, to=3-3]
\end{tikzcd}.\]
This diagram commutes, and so, we get, in particular, that the map $\gr(\Psi)$ must be injective. Hence, the map $\Psi$ is an isomorphism onto its image. This along with the injectivity of $\gr(\Psi)$ implies that $\gr(\Psi)$ is in fact bijective.

Hence, all the maps in the above diagram are bijections. In particular, the image of $\gr(\Psi)$ in $\gr(\dd(\Hh^{reg}))=\CC[\Hh\times\Hh^{reg}]$ can be identified with the image of $\gr(\Theta)$. This identification can also be obtained as the associated graded of the embedding $\Theta(eH_ce)\sub \im(\Psi)$ from Corollary~\ref{cor:Dunkl}, and so, this embedding must itself be an equality. Hence, we can compose with $\Theta^{-1}$ (as $\Theta$ is injective) to get an algebra homomorphism:
\[\Theta^{-1}\circ \Psi:\Big(\dd(\Gg)/\Jj\Big)^{\Gg} \lra eH_ce.\]
It follows from the commutative diagram that the associated graded version of this map gives the bijection between $\gr\Big((\dd(\Gg)/\Jj)^{\Gg}\Big)$ and $\gr(eH_ce)$. Hence, the map $\Theta^{-1}\circ\Psi$ itself must be a bijection, which is exactly the claim of the theorem.
\end{proof}

\begin{corollary}
\begin{enumerate}
\item We have an isomorphism of commutative algebras:
\[\gr\Big(\dd(\Gg)/\Jj\Big)^{\Gg}\stackrel{\sim}{\lra}\gr(eH_ce).\]
\item All the maps in the above commutative diagram are isomorphisms. In particular, we get that:
\begin{enumerate}
\item We have an isomorphism:
\[\CC[A/\!/G]=\Big(\CC[\Gg\times \Gg]/J\Big)^{\Gg}\simeq \gr\Big(\dd(\Gg)/\Jj\Big)^{\Gg}.\]
\item We have the equality of ideals $\gr(\Jj)^{\Gg}=J^{\Gg}$ in the ring $\CC[\Gg\times\Gg]^{\Gg}$ (cf. Lemma~\ref{lem:crugr}). 
\end{enumerate}
\end{enumerate}
\end{corollary}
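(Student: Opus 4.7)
The plan is to read off the corollary as a direct consequence of the commutative diagram constructed in the proof of Theorem~\ref{theo:psi}, together with the fact that taking $\Gg$-invariants commutes with taking associated graded (as $\Gg$ is reductive, noted in the Remark preceding Lemma~\ref{lem:crugr}).

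For part $(1)$, the isomorphism $\Theta^{-1}\circ\Psi:(\dd(\Gg)/\Jj)^{\Gg}\xra{\sim}eH_ce$ produced in Theorem~\ref{theo:psi} is filtration-preserving, since both $\Psi$ and the Dunkl embedding $\Theta$ are; passing to associated graded objects then yields the required isomorphism $\gr((\dd(\Gg)/\Jj)^\Gg)\simeq\gr(eH_ce)$. To deduce the strong conclusion in part $(2)$, I would revisit the diagram in the proof of Theorem~\ref{theo:psi}: once $\gr(\Psi)$ was shown to be an isomorphism, every edge of that diagram becomes an isomorphism by commutativity, and part $(2)$ is just the statement of this fact for the individual edges.

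More explicitly, for part $(2)(a)$ I would compose the isomorphisms across the top row of the diagram, namely $\phi$ (identifying $\Hh^*$ with $\Hh$ via the trace form) followed by the isomorphism of Corollary~\ref{lem:combi}, to obtain $\CC[\Hh\times\Hh]^W\simeq(\CC[\Gg\times\Gg]/J)^{\Gg}=\CC[A/\!/G]$; Proposition~\ref{prop:PBW} gives $\CC[\Hh\times\Hh]^W\simeq\gr(eH_ce)$, and then part $(1)$ converts the right-hand side to $\gr((\dd(\Gg)/\Jj)^\Gg)$. The required isomorphism $\CC[A/\!/G]\simeq\gr((\dd(\Gg)/\Jj)^{\Gg})$ is the composition.

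For part $(2)(b)$, I would trace the right column. Lemma~\ref{lem:crugr} gives the inclusion $J\sub\gr(\Jj)$, hence a surjective map of $\Gg$-invariants
\[\bigl(\CC[\Gg\times\Gg]/J\bigr)^{\Gg}\twoheadrightarrow\bigl(\gr(\dd(\Gg))/\gr(\Jj)\bigr)^{\Gg}=\gr(\dd(\Gg))^{\Gg}/\gr(\Jj)^{\Gg},\]
where the last equality uses reductivity. A further projection lands in $\gr((\dd(\Gg)/\Jj)^\Gg)$. The composition of these two surjections is exactly the isomorphism of part $(2)(a)$, so each surjection is injective. Injectivity of the first forces $J^{\Gg}=\gr(\Jj)^{\Gg}$ inside $\CC[\Gg\times\Gg]^{\Gg}$, which is the content of $(2)(b)$. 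The main (and only) subtle point here is confirming that the two-step factorization through $\gr(\dd(\Gg))^\Gg/\gr(\Jj)^\Gg$ identifies with the right column of the diagram; this follows from the definitions of the vertical maps together with the standard fact that $\gr$ commutes with $(-)^\Gg$. No new calculation beyond what is already in the proof of Theorem~\ref{theo:psi} is needed.
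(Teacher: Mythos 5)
Your proposal is correct and takes essentially the same route as the paper. The paper gives no separate proof for the corollary because all the facts it lists are established in the course of proving Theorem~\ref{theo:psi}: once $\gr(\Psi)$ is shown to be injective, the commutative diagram forces every edge to be a bijection, and the three statements are just readings of individual edges and compositions. Your extraction of $(2)(b)$ from the injectivity of the quotient map $\CC[\Gg\times\Gg]^{\Gg}/J^{\Gg}\twoheadrightarrow \CC[\Gg\times\Gg]^{\Gg}/\gr(\Jj)^{\Gg}$ is exactly right. One small caution on your phrasing in part $(1)$: ``filtration-preserving bijection $\Rightarrow$ isomorphism on $\gr$'' is not automatic in general (one also needs strictness, or that the inverse is filtered), but this is harmless here because, as you note in part $(2)$, the bijectivity of $\gr(\Theta^{-1}\circ\Psi)$ already follows directly from the diagram, which is also how the paper deduces it.
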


\subsection{Construction of the isomorphism $\Phi$} \label{sec:isotwo}

Now, we construct an isomorphism of non-commutative algebras:

\[\Phi:\Big(\dd(\Gg)/\Jj\Big)^{\Gg}\lra \Big((\dd(\Gg)\otimes W_{2n})/(\dd(\Gg)\otimes W_{2n})\cdot\Theta_2(\Gg)\Big)^ {\Gg}.\]

In fact, we prove the following stronger result:

\begin{theorem} \label{theo:phi}
There is an isomorphism of vector spaces:
\[\Phi:\dd(\Gg)/\Jj \lra \Big((\dd(\Gg)\otimes W_{2n})/(\dd(\Gg)\otimes W_{2n})\cdot\Theta_2(\Gg)\Big)^{\{\pm 1\}}.\]
Furthermore, the map $\Phi$ restricts to an algebra isomorphism between the respective subspaces of $\Gg$-invariants.
\end{theorem}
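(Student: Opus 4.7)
The plan is to deduce both parts of the theorem from the non-commutative shifting Lemma~\ref{lem:maintech}, adapting the strategy of Theorem~\ref{theo:mainclass} to the quantum setting. I apply the lemma with $\aaa=\uu\Gg$, $\mathcal{V}=\Gg$, $\bb_0=\dd(\Gg)$, $\bb_1=W_{2n,even}$, $f_i=\Theta_i$ for $i=0,1,2$, and $\mathcal{I}=\kk=\ker(\Theta_1)$; the anti-automorphism $S$ is the principal anti-automorphism of $\uu\Gg$ extending $v\mapsto -v$ on $\Gg$. Surjectivity of $\Theta_1$ onto $W_{2n,even}$ is immediate, since $\Theta_1(\Gg)$ spans the Weyl-ordered quadratics $x_ix_j$, $y_iy_j$, $x_iy_j+y_jx_i$, which generate $W_{2n,even}$ as recalled in Section~\ref{sec:quant}. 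The substantive input required by the lemma is $\bb_0\cdot f_0(S(\mathcal{I}))=\bb_0\cdot f_0(\mathcal{I})$, which I will obtain by proving the stronger statement $S(\kk)=\kk$. This is the main obstacle.

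For $S(\kk)=\kk$, I exhibit an anti-automorphism $\tau:W_{2n}\to W_{2n}$ satisfying $\tau\circ\Theta_1=\Theta_1\circ S$. The choice is $\tau(x_i)=\sqrt{-1}\,x_i$ and $\tau(y_i)=\sqrt{-1}\,y_i$: the factor $(\sqrt{-1})^2=-1$ acquired from reversing a product cancels the sign flip of $[y_i,x_j]=\delta_{ij}$ under order reversal, so $\tau$ is well-defined as an anti-automorphism of the Weyl algebra. A direct check shows that $\tau$ sends each of $x_ix_j$, $y_iy_j$, and $\tfrac12(x_iy_j+y_jx_i)$ to its negative, so $\tau(\Theta_1(v))=-\Theta_1(v)=\Theta_1(S(v))$ on $\Gg$. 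Both $\tau\circ\Theta_1$ and $\Theta_1\circ S$ being anti-homomorphisms, this identity propagates to $\uu\Gg$, and bijectivity of $\tau$ then yields $\kk=\ker(\tau\circ\Theta_1)=\ker(\Theta_1\circ S)=S(\kk)$.

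Lemma~\ref{lem:maintech} now delivers the vector space isomorphism $\dd(\Gg)/\Jj\simeq (\dd(\Gg)\otimes W_{2n,even})/((\dd(\Gg)\otimes W_{2n,even})\cdot\Theta_2(\Gg))$, given by $d\mapsto d\otimes 1$. To recast the target as in the theorem, I use that $\{\pm 1\}\sub G$ acts trivially on $\dd(\Gg)$ and by $x_i,y_i\mapsto -x_i,-y_i$ on $W_{2n}$, with invariants $W_{2n,even}$. Since $\Theta_2(\Gg)$ already lies in the invariant piece $\dd(\Gg)\otimes W_{2n,even}$, the left ideal decomposes along the even/odd grading, and semisimplicity of the finite group $\{\pm 1\}$ makes taking invariants commute with the quotient. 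This identifies the target above with $((\dd(\Gg)\otimes W_{2n})/(\dd(\Gg)\otimes W_{2n})\cdot\Theta_2(\Gg))^{\{\pm 1\}}$, yielding $\Phi$.

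For the second assertion, the formula $[\Theta_2(v),d\otimes 1]=[\Theta_0(v),d]\otimes 1$ shows $d\mapsto d\otimes 1$ intertwines the $\Gg$-actions on $\dd(\Gg)$ and $\dd(\Gg)\otimes W_{2n}$; combined with its unital multiplicativity, this makes the restriction $\Phi^{\Gg}$ an algebra homomorphism, modulo verifying that both quotients are indeed algebras. The latter follows from $\kk\sub\uu\Gg$ being a two-sided ideal: any $\Gg$-invariant element of $\dd(\Gg)$ commutes with $\Theta_0(\Gg)$, hence with all of $\Theta_0(\uu\Gg)$, so $\Jj^{\Gg}$ is a two-sided ideal in $\dd(\Gg)^{\Gg}$, and the same argument applies on the target side. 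Since $\Phi^{\Gg}$ is also a vector space bijection by the preceding paragraphs, it is the desired algebra isomorphism.
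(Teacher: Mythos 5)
Your proof is correct and takes essentially the same route as the paper's: both apply Lemma~\ref{lem:maintech} with $\aaa=\uu\Gg$, $\bb_0=\dd(\Gg)$, $\bb_1=W_{2n,even}$, $f_i=\Theta_i$, and both establish $S(\kk)=\kk$ by exhibiting an anti-automorphism of $W_{2n}$ sending $x_i,y_i\mapsto \sqrt{-1}\,x_i,\sqrt{-1}\,y_i$ that intertwines $\Theta_1$ with $S$; your $\tau$ is the paper's $S'$. The only difference is that you spell out a few more steps (involutivity of $S$, the kernel chase, and the intertwining of $\Gg$-actions in the second part), which the paper leaves implicit.
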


\begin{proof}
As the group $\{\pm 1\}$ acts trivially on the algebra $\dd(\Gg)$, we have an equality of vector spaces:
\[\Big((\dd(\Gg)\otimes W_{2n})/(\dd(\Gg)\otimes W_{2n})\cdot\Theta_2(\Gg)\Big)^{\{\pm 1\}}=(\dd(\Gg)\otimes W_{2n,even})/(\dd(\Gg)\otimes W_{2n,even})\cdot\Theta_2(\Gg).\]
We apply Lemma~\ref{lem:maintech} taking $\aaa=\uu\Gg$, $\mathcal{V}=\Gg$, $\bb_0=\dd(\Gg)$, $\bb_1=W_{2n,even}$ and $f_i=\Theta_i$ for $i=0,1,2$. Here, the ideal $\mathcal{I}=\ker(f_1)=\ker(\Theta_1)$ is equal to $\kk$, which is $S$-invariant because we have a commutative diagram:
\[\begin{tikzcd}[ampersand replacement=\&]
	{\mathcal{U}\mathfrak{g}} \& {W_{2n}} \\
	{\mathcal{U}\mathfrak{g}} \& {W_{2n}}
	\arrow["{S}"', from=1-1, to=2-1]
	\arrow["{\Theta_1}", from=1-1, to=1-2]
	\arrow["{S'}", from=1-2, to=2-2]
	\arrow["{\Theta_1}", from=2-1, to=2-2]
\end{tikzcd},\]
where the right vertical map $S':W_{2n}\ra W_{2n}$ is an algebra anti-homomorphism defined by sending the generators $x_1,\dots,x_n,y_1,\dots,y_n$ of $W_{2n}$ to $ix_1,\dots ix_n,iy_1,\dots,iy_n$ respectively, where $i=\sqrt{-1}$. Then, the conclusion from Lemma~\ref{lem:maintech} gives us the required vector space isomorphism.

Furthermore, as this isomorphism is induced by the map $\dd(\Gg)\ra \dd(\Gg)\otimes W_{2n}$ given by $D\mapsto D\otimes 1$ for $D\in\dd(\Gg)$, which is an algebra homomorphism, the restriction to $\Gg$-invariants gives an algebra isomorphism.
\end{proof}

\begin{corollary}
We have isomorphisms of commutative algebras:
\[\gr\Big(\dd(\Gg)/\Jj\Big)^{\Gg}\simeq\gr\Big((\dd(\Gg)\otimes W_{2n})/(\dd(\Gg)\otimes W_{2n})\cdot\Theta_2(\Gg)\Big)^{\Gg}\simeq \Big(\CC[\Gg\times\Gg\times V]/I\Big)^{\Gg}=\CC[X/\!/G].\]
\end{corollary}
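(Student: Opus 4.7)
The plan is to obtain both isomorphisms by combining results already established in the paper, with the bulk of the work having been done in Theorem~\ref{theo:phi}, the corollary following Theorem~\ref{theo:psi}, and Theorem~\ref{theo:red}. For the first isomorphism, the algebra isomorphism $\Phi\colon (\dd(\Gg)/\Jj)^{\Gg}\to((\dd(\Gg)\otimes W_{2n})/(\dd(\Gg)\otimes W_{2n})\cdot\Theta_2(\Gg))^{\Gg}$ of Theorem~\ref{theo:phi} is induced by the filtration-preserving map $D\mapsto D\otimes 1$. Since $\Gg$ is reductive, taking $\Gg$-invariants commutes with passing to the associated graded (as remarked in $\mathsection\ref{sec:quant}$), so applying $\gr$ to $\Phi$ directly yields the first claimed isomorphism.

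For the second isomorphism, I would combine the corollary following Theorem~\ref{theo:psi}, which supplies $\gr(\dd(\Gg)/\Jj)^{\Gg}\simeq\CC[A/\!/G]$, with Theorem~\ref{theo:red}, which gives $X/\!/\{\pm 1\}\simeq A$. Taking further $G/\{\pm 1\}$-invariants of the latter isomorphism yields $\CC[X/\!/G]\simeq\CC[A/\!/G]$, and chaining these isomorphisms gives $\gr(\dd(\Gg)/\Jj)^{\Gg}\simeq\CC[X/\!/G]=(\CC[\Gg\times\Gg\times V]/I)^{\Gg}$. Composing with the first isomorphism then completes the proof.

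The main subtlety is verifying that $\gr\Phi$ is genuinely a bijection (and not just a surjection from the naive quotient), which amounts to strictness of $\Phi$ with respect to the chosen filtrations. This in turn reduces to checking strictness of the filtered short exact sequence underlying the quotient $(\dd(\Gg)\otimes W_{2n})/(\dd(\Gg)\otimes W_{2n})\cdot\Theta_2(\Gg)$. I expect this to follow from the observation $\gr\Theta_2=\theta_2$ combined with the same strictness argument employed in Lemma~\ref{lem:crugr}, where the surjectivity of $\Theta_1$ onto $W_{2n,even}$ (with the filtration induced from $\uu\Gg$) is the key input.
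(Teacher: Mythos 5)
Your overall route matches the paper's: both isomorphisms come from taking $\gr$ of $\Phi$ together with the chain $\gr(\dd(\Gg)/\Jj)^{\Gg}\simeq\CC[A/\!/G]\simeq\CC[X/\!/G]$ supplied by the corollary following Theorem~\ref{theo:psi} and Theorem~\ref{theo:red}, and you correctly identify that the crux is strictness of $\Phi$. Where you diverge is in the route to that strictness. You propose to establish strictness of the filtered exact sequence defining the quotient $(\dd(\Gg)\otimes W_{2n})/(\dd(\Gg)\otimes W_{2n})\cdot\Theta_2(\Gg)$ by an argument modeled on Lemma~\ref{lem:crugr}, but that lemma's strictness concerns only the surjection $\uu\Gg\twoheadrightarrow W_{2n,even}$; strictness of the quotient sequence you name would amount to the equality $\gr\big((\dd(\Gg)\otimes W_{2n})\cdot\Theta_2(\Gg)\big)=I$, a nontrivial flatness/regular-sequence assertion about a left ideal that does not follow by the same computation and is not what the paper proves at this point.

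The paper takes a more economical path: $\Phi$ was produced by Lemma~\ref{lem:maintech}, which comes with an explicit two-sided inverse $\Psi\colon b_0\otimes b_1\mapsto b_0\otimes\sigma'(b_1)$, and one checks directly that $\Psi$ is filtration-preserving. Lemma~\ref{lem:crugr} does enter here, but only through the weaker input that $\sigma'=(\Theta_1\circ S)^{-1}$ is filtration-preserving (strictness of $\Theta_1$), combined with the fact that $\Theta_0$ sends PBW-degree $k$ to order $\leq k$. A filtered bijection whose inverse is also filtration-preserving automatically has bijective $\gr$, with no need to analyze the quotient sequence. The paper then places $\gr(\Phi)$ in a commutative square with $\phi^*\colon\CC[A/\!/G]\to\CC[X/\!/G]$ and the two natural projections, so that the square, rather than an abstract chain of isomorphisms, forces the canonical projection $\CC[X/\!/G]\twoheadrightarrow\gr\big((\dd(\Gg)\otimes W_{2n})/\cdots\big)^{\Gg}$ to be the isomorphism. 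Your chain establishes the existence of an isomorphism, which is what the corollary literally asserts, but the diagram argument is both cleaner and slightly more informative, and it sidesteps the strictness claim that your sketch leaves as a gap.
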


\begin{proof}
Recall from $\mathsection{\ref{sec:class}}$ that we have an isomorphism of schemes $\phi:X/\!/G \ra A/\!/G$. This gives an isomorphism between the coordinate rings: \[\phi^*:\Big(\CC[\Gg\times\Gg]/J\Big)^{\Gg}=\CC[A/\!/G]\lra \CC[X/\!/G]=\Big(\CC[\Gg\times\Gg\times V]/I\Big)^{\Gg}.\]
Then, we can consider the commutative diagram:
\[\begin{tikzcd}[ampersand replacement=\&] {\Big(\CC[\Gg\times\Gg]/J\Big)^{\Gg}} \& {\Big(\CC[\Gg\times\Gg\times V]/I\Big)^{\Gg}}\\
	{\gr\Big(\dd(\Gg)/\Jj\Big)^{\Gg}} \& {\gr\Big((\dd(\Gg)\otimes W_{2n})/(\dd(\Gg)\otimes W_{2n})\cdot\Theta_2(\Gg)\Big)^{\Gg}}
	\arrow["\phi^*", "\sim"', from=1-1, to=1-2]
	\arrow["proj", two heads, from=1-1, to=2-1]
	\arrow["\gr(\Phi)", from=2-1, to=2-2]
	\arrow["proj", two heads, from=1-2, to=2-2]
\end{tikzcd},\]
In this diagram, the top and the left maps are already known to be bijective. We have now shown that $\Phi$ is bijective too. Also, it's clear by unwrapping the defintions that $\Phi$ is a filtration preserving map, and so is its inverse. Therefore, we conclude that $\gr(\Phi)$ must also be a bijection. This forces the fourth map in the commutative diagram to be a bijection.
\end{proof}
\subsection{Quantum Hamiltonian reduction functor} \label{sec:QHRF}

Let $(\dd(\Gg)\otimes W_{2n})$-$mod$ denote the category of finitely generated $(\dd(\Gg)\otimes W_{2n})$-modules. The algebra $\dd(\Gg)\otimes W_{2n}$ contains the subalgebra $Z= \Sym(\Gg)^{\Gg}$ of invariant differential operators on $\Gg$ with constant coefficients. Let $Z_+\sub Z$ be the augmentation ideal, consisting of differential operators with zero constant term.

Furthermore, we have the algebra homomorphism $\Theta_2:\uu\Gg \ra \dd(\Gg)\otimes W_{2n}$. Also, let $eu\in \dd(\Gg)\sub \dd(\Gg)\otimes W_{2n}$ denote the Euler vector field on $\Gg$. Let $\uu$ be the subalgebra of $\dd(\Gg)\otimes W_{2n}$ generated by the image of $\Theta_2$ and $eu$.

\begin{definition}
Let $\cc$ be the full subcategory of $(\dd(\Gg)\otimes W_{2n})$-$mod$, whose objects are $(\dd(\Gg)\otimes W_{2n})$-modules $M$, such that the action on $M$ of the subalgebra $Z_+$ is locally nilpotent and the action of $\uu$ is locally finite. The category $\cc$ will be referred to as the category of admissible $(\dd(\Gg)\otimes W_{2n})$-modules.
\end{definition}

For any $M\in \cc$, we have a $\Gg$-action on $M$ via the map $\Theta_2$. As the group $G=Sp(V)$ is simply connected, the action of $\uu$ on $M$ being locally finite implies that the $\Gg$-action on $M$ can be integrated to get a rational representation of the group $G$ on $M$. Thus, the local finiteness condition in the above definition implies that the modules $M$ are $G$-equivariant.

We can identify $\dd(\Gg)\otimes W_{2n}$ with the ring of differential operators $\dd(\Gg\times L)$. Taking the order filtration on this algebra, for any finitely generated $\dd(\Gg)\otimes W_{2n}$-module $M$, there exists a characteristic variety $\Ch(M)\sub T^*(\Gg\times L)\simeq \Gg\times\Gg\times V$.

\begin{proposition}
For any $G$-equivariant module $M\in(\dd(\Gg)\otimes W_{2n})$-$mod$, we have $M\in \cc$ if and only if $\Ch(M)\sub X^{nil}.$ Furthermore, all the objects in $\cc$ are holonomic $(\dd(\Gg)\otimes W_{2n})$-modules.
\end{proposition}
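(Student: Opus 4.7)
The plan is to pick a $G$-stable good filtration on $M$ compatible with a filtration on $\dd(\Gg)\otimes W_{2n}$ for which $\gr(\dd(\Gg)\otimes W_{2n}) \simeq \CC[\Gg\times\Gg\times V]$ and $\Theta_2(\xi)$ is homogeneous of positive degree with symbol equal to the classical moment map $\mu_2^*(\xi) = \Tr(\xi([x,y]+i^2))$; for instance, one may give the coordinates on $\Gg$ weight $0$, the constant-coefficient operators $\partial_{x_{ij}}$ weight $2$, and the Weyl generators $x_i, y_i$ weight $1$, making $\Theta_2(\xi)$ homogeneous of degree $2$. The two claims of the proposition are interlocked: once the containment $\Ch(M) \subseteq X^{nil}$ is established, holonomicity is automatic from Theorem~\ref{theo:subsch}, since $\dim X^{nil} = \dim\Gg + n = \tfrac{1}{2}\dim T^*(\Gg\times L)$ meets Bernstein's lower bound exactly.

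For the direction $M \in \cc \Rightarrow \Ch(M) \subseteq X^{nil}$, strong $G$-equivariance (in the paper's sense that the $\Theta_2$-action integrates to a rational $G$-action) forces $\Theta_2(\xi)$ to act as the degree-$0$ differential $\rho(\xi)$ of the $G$-action on $M$. Comparing top filtration degrees in the identity $\Theta_2(\xi)=\rho(\xi)$, the symbol $\sigma(\Theta_2(\xi)) = \mu_2^*(\xi)$ must annihilate $\gr(M)$, yielding $\Ch(M) \subseteq \mu_2^{-1}(0) = \{[x,y]+i^2=0\}$. Separately, local nilpotence of each $P \in Z_+$ on $M$ forces $\sigma(P)|_{\Ch(M)} = 0$; as $\sigma(P) = P(y)$ runs over positive-degree $G$-invariants in the cotangent variable $y$, the cut-out locus is exactly $\Gg\times\nn\times V$. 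Intersecting the two gives $\Ch(M) \subseteq X^{nil}$.

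For the converse, local nilpotence of $Z_+$ on $M$ comes from the Nullstellensatz applied to $\gr(M)$: the symbol $P(y)$ of each $P \in Z_+$ vanishes on $X^{nil} \supseteq \Ch(M)$, hence $\sigma(P)^N \in \Ann(\gr(M))$ for some $N$, and an inductive lift along the filtration upgrades this to local nilpotence of $P$ on $M$. Local finiteness of $\Theta_2(\Gg) \subset \uu$ is free from the $G$-equivariance hypothesis. The main obstacle is local finiteness of the Euler vector field $eu$, whose symbol $\Tr(xy)$ does not vanish on $X^{nil}$ in general. To handle this, I would exploit the commutation $[eu, \Theta_2(\xi)] = 0$ (linear vector fields on $\Gg$ commute with $eu$, and the $W_{2n}$-factor commutes with $\dd(\Gg)$) together with the stability of $X^{nil}$ under the $\CC^*$-action $(x,y,i) \mapsto (\lambda x, \lambda^{-1}y, i)$ generated by $eu$. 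Holonomicity (already forced by $\Ch(M) \subseteq X^{nil}$ and Theorem~\ref{theo:subsch}) implies finite length, reducing the problem to simple subquotients; on each simple holonomic $M$ with $\Ch(M) \subseteq X^{nil}$, a careful analysis of $eu$-orbits relative to the $\Theta_2(\Gg)$-action and the induced $\CC^*$-equivariance on $\gr(M)$ should then yield $eu$-local finiteness on $M$, completing the proof.
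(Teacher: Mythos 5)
The paper's own ``proof'' of this proposition is a one-line citation: it says the argument follows by repeating the proof of Proposition 5.3.2 of \cite{GG} with $\gl(V)\times\PP$ replaced by $\spp(V)\times L$, and then notes that holonomicity follows since $X^{nil}$ is Lagrangian. Your attempt to reconstruct that proof from scratch gets the easy half right but has genuine gaps in the other direction.

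Your forward implication ($M\in\cc\Rightarrow\Ch(M)\subseteq X^{nil}$) and the holonomicity deduction are essentially correct: a $G$-stable good filtration makes $\Theta_2(\xi)$ filtration-preserving, so its symbol (the moment map) annihilates $\gr(M)$; and local nilpotence of $P\in Z_+$ forces $\sigma(P)$ to be nilpotent on the finitely generated module $\gr(M)$, hence to vanish on $\Ch(M)$, which cuts out $\Gg\times\nn\times V$; and Lagrangianity of $X^{nil}$ gives Bernstein's bound with equality. (Note that the paper works throughout with the order filtration on $\dd(\Gg\times L)$, not the mixed weight filtration you introduce; your choice also works but is an unnecessary complication.)

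The converse, however, is where the real content lies, and your argument breaks down at both substantive steps. First, the claim that $\sigma(P)^N\in\Ann(\gr M)$ ``lifts inductively'' to local nilpotence of $P$ on $M$ is false: if $P$ has order $d\geq 1$, then $\sigma(P)^N\gr M=0$ gives $P^N F_iM\subseteq F_{i+Nd-1}M$, and iterating yields $P^{kN}F_iM\subseteq F_{i+k(Nd-1)}M$ --- the filtration degree goes \emph{up}, not down, so the iteration never terminates. A minimal counterexample to the naive implication: for $M=\dd(\CC)/\dd(\CC)(\partial-1)\simeq\CC[x]e^x$ with the order filtration, $\Ch(M)$ is the zero section, $\sigma(\partial)$ vanishes on it, yet $\partial$ is manifestly not locally nilpotent. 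Ruling out this kind of behavior is exactly where the $G$-equivariance and the specific geometry of $X^{nil}$ must enter, and your argument does not use them here. Second, for local finiteness of $eu$ you correctly identify the difficulty (the symbol $\Tr(xy)$ does not vanish on $X^{nil}$), correctly note $[eu,\Theta_2(\xi)]=0$ and the $\CC^*$-stability of $X^{nil}$, and correctly reduce to simple subquotients via holonomicity, but then you stop at ``a careful analysis \dots should then yield $eu$-local finiteness,'' which is not a proof. These two gaps are precisely the nontrivial content of the cited Proposition 5.3.2 of \cite{GG}, whose proof in turn leans on results about admissible $\dd$-modules and characteristic varieties of character sheaves (in the spirit of Ginzburg and of Mirkovi\'c--Vilonen); a self-contained argument would need to reproduce that input, not just gesture at it.
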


\begin{proof}
The proof of the fact that $M \in \cc$ if and only if $\Ch(M)\sub X^{nil}$ follows by essentially repeating the proof of Proposition 5.3.2 of \cite{GG} replacing $\gl(V)\times \PP$ by $\spp(V)\times L$ everywhere. The holonomicity of the objects in $\cc$ follows from the fact that their characteristic variety lies in $X^{nil}$, which is a Lagrangian subvariety of $\Gg\times\Gg\times V$.
\end{proof}

We now define the quantum Hamiltonian reduction functor. Let $Q$ be the quotient $(\dd(\Gg)\otimes W_{2n})/((\dd(\Gg)\otimes W_{2n})\cdot\Theta_2(\Gg))$. We have the quantum Hamiltonian reduction of $\dd(\Gg)\otimes W_{2n}$ with respect to the $\Gg$-action, given by $\aaa:=Q^{\Gg}=\Big((\dd(\Gg)\otimes W_{2n})/(\dd(\Gg)\otimes W_{2n})\cdot\Theta_2(\Gg)\Big)^{\Gg}$. By Theorem~\ref{theo:quant}, we have an isomorphism $\aaa\simeq eH_ce$, where $eH_ce$ is the spherical Cherednik algebra with parameter $c=(-1/4,-1/2)$. Let $eH_ce\text{-}mod$ be the category of finitely generated $eH_ce$-modules.
Then, by Proposition 7.2.2 and Corollary 7.2.4 of \cite{GG}, we have:
\begin{proposition}
\begin{enumerate}
\item The space $Q$ is a finitely generated $(\dd(\Gg)\otimes W_{2n})$-module.
\item There is an exact functor $\HH$:
\[\HH:(\dd(\Gg)\otimes W_{2n})\text{-mod}\lra eH_ce\text{-mod}\]
\[M\mapsto \Hom_{\dd(\Gg)\otimes W_{2n}}(Q,M)=M^{\Gg}.\]
\item The functor $\HH$ has a left adjoint $\prescript{T}{}\HH$:
\[\prescript{T}{}\HH:eH_ce\text{-mod} \lra (\dd(\Gg)\otimes W_{2n})\text{-mod}\]
\[M\mapsto Q\otimes_{\aaa}M,\]
such that the canonical adjunction morphism $M\ra \HH(\prescript{T}{}\HH(M))$ is an isomorphism for all $M\in eH_ce\text{-mod}$.
\item The full subcategory $\ker(\HH)$ is a Serre subcategory of $(\dd(\Gg)\otimes W_{2n})\text{-mod}$ and the functor $\HH$ induces an equivalence of categories:
\[(\dd(\Gg)\otimes W_{2n})\text{-mod}/\ker(\HH)\simeq eH_ce\text{-mod}\]
\end{enumerate}
\end{proposition}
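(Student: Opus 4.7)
The plan is to adapt the arguments of [GG, Proposition 7.2.2, Corollary 7.2.4] to our symplectic setting, leveraging the algebra isomorphism $\aaa \simeq eH_ce$ already established in Theorem~\ref{theo:quant}. Write $D := \dd(\Gg) \otimes W_{2n}$ for brevity.

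For part (1), I would observe that $Q$ is the cyclic $D$-module generated by the image of $1$, which immediately gives finite generation. For part (2), the description of $Q$ as the quotient $D/D\cdot\Theta_2(\Gg)$ yields the natural identification $\Hom_D(Q, M) = \{m \in M : \Theta_2(\Gg)m = 0\} = M^{\Gg}$, with the $\aaa$-module structure induced by the right $\aaa$-action on $Q$. To establish exactness of $\HH$, I would use that $G = Sp(V)$ is reductive and simply connected, so that the $\Gg$-action on modules in our category integrates to a rational $G$-action, and Weyl's complete reducibility makes $(-)^\Gg=(-)^G$ an exact functor.

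For part (3), the existence of the left adjoint follows from the standard tensor-Hom adjunction
\[
\Hom_D(Q \otimes_{\aaa} M, N) \simeq \Hom_{\aaa}(M, \Hom_D(Q, N)).
\]
The nontrivial point is verifying that the unit $M \to (Q \otimes_{\aaa} M)^{\Gg}$ is an isomorphism. I would decompose $Q$ into $G$-isotypic components $Q = \bigoplus_\lambda Q_\lambda$; since $\aaa = Q^{\Gg}$ is the trivial isotypic and consists of $G$-invariants, the right $\aaa$-action preserves each isotypic component, so one obtains $(Q \otimes_{\aaa} M)^{\Gg} = Q_0 \otimes_{\aaa} M = \aaa \otimes_{\aaa} M = M$, using exactness of $G$-invariants from (2) to commute invariants past the tensor product.

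Finally, for part (4), once $\HH$ is exact and the unit of the adjunction is invertible, Gabriel's localization theorem applies: $\ker(\HH)$ is automatically a Serre subcategory, and $\HH$ descends to a functor on the Serre quotient that is fully faithful (by invertibility of the unit, equivalently, by $\prescript{T}{}\HH$ being fully faithful) and essentially surjective (any $M \in eH_ce$-mod is naturally isomorphic to $\HH(\prescript{T}{}\HH(M))$), yielding the asserted equivalence. The main obstacle I expect is the isotypic decomposition argument in (3); specifically, one must verify that the right $\aaa$-action on $Q$ preserves the $G$-module decomposition, which rests on $\aaa$ being $G$-fixed so that multiplication by $\aaa$ commutes with the $G$-action. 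The remaining steps are either standard categorical manipulations or follow formally from the results already established earlier in the paper.
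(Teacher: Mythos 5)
Your proposal is correct and follows essentially the same route as the paper, which simply invokes Proposition 7.2.2 and Corollary 7.2.4 of \cite{GG}; you are fleshing out exactly the argument those results rest on (cyclicity of $Q$, the identification $\Hom_D(Q,M)=M^{\Gg}$, exactness from reductivity, tensor--Hom adjunction, the isotypic decomposition of $Q$ to compute $(Q\otimes_{\aaa}M)^{\Gg}$, and Gabriel localization for the Serre quotient).

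One point worth being careful about: the exactness of $\HH=(-)^{\Gg}$ in part (2), and the identification $(Q\otimes_{\aaa}M)^{\Gg}\simeq M$ in part (3), both require the $\Gg$-action on the module to be locally finite so that it integrates to a rational $G$-action and the isotypic decomposition is available. This does not hold for an arbitrary finitely generated $\dd(\Gg)\otimes W_{2n}$-module, so as in \cite{GG} the functor should be understood on the subcategory of $G$-equivariant (equivalently, $\Theta_2(\Gg)$-locally-finite) modules; the category $\cc$ introduced later in the section is contained in this subcategory, which is what is ultimately used. With that restriction, your isotypic-decomposition argument for part (3) is the right one: writing $Q=\bigoplus_\lambda V_\lambda\otimes_\CC W_\lambda$ as a $(G,\aaa)$-bimodule with $\aaa$ acting only on the multiplicity spaces $W_\lambda$ gives $(Q\otimes_{\aaa}M)^{G}=W_0\otimes_{\aaa}M=\aaa\otimes_{\aaa}M=M$, which is slightly cleaner than ``commuting invariants past the tensor product.''
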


Next, we recall that the algebra $eH_ce$ contains the subalgebra $\Sym(\Hh)^W$. Let $\Sym(\Hh)^W_+$ denote the augmentation ideal of $\Sym(\Hh)^W$. Let $\oo(eH_ce)$ be the category of $\oo$ of the algebra $eH_ce$, which is defined as the full subcategory of $eH_ce\text{-}mod$ whose objects are finitely generated $eH_ce$-modules with locally nilpotent action of $\Sym(\Hh)_+^W\sub eH_ce$ (see, for example, \cite[$\mathsection{2}$]{BEG}).

\begin{proposition} \label{prop:final}
The functor $\HH$ restricts to an exact functor $\HH:\cc\ra \oo(eH_ce)$. This induces an equivelence of categories $\cc/\ker(\HH)\simeq \oo(eH_ce).$
\end{proposition}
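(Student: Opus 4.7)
The proof naturally splits into two parts: (i) $\HH$ restricts to a functor $\cc \to \oo(eH_ce)$, and (ii) the induced functor $\cc/\ker(\HH|_{\cc}) \to \oo(eH_ce)$ is an equivalence.

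For (i), fix $M \in \cc$. Finite generation of $\HH(M) = M^{\Gg}$ over $\aaa \simeq eH_ce$ follows from finite generation of $M$ combined with reductivity of $G$: since $G$ is simply connected and $\uu$ acts locally finitely, the $\Gg$-action on $M$ integrates to a rational $G$-action, and standard averaging for reductive groups produces finitely many $\Gg$-invariant generators of $M^{\Gg}$ over $\aaa$. For local nilpotency of $\Sym(\Hh)^W_+$ on $\HH(M)$, combining Lemma~\ref{lem:bestlem}(b) with Theorem~\ref{theo:genRCA} shows that the composite $\Theta^{-1} \circ \Psi$ identifies the image of $Z = \Sym(\Gg)^{\Gg}$ in $\aaa$ with $\Sym(\Hh)^W \subset eH_ce$. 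Since $Z_+$ acts locally nilpotently on $M$ by definition of $\cc$ and $M^{\Gg}$ is a $Z$-stable subspace of $M$, local nilpotency of $\Sym(\Hh)^W_+$ on $\HH(M)$ is immediate.

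For (ii), fully faithfulness of the induced functor is inherited from the ambient equivalence stated in part~(4) of the preceding proposition by restricting to $\cc$ and $\oo(eH_ce)$. The heart of the argument is essential surjectivity: given $N \in \oo(eH_ce)$, one must produce $M \in \cc$ with $\HH(M) \simeq N$. The natural candidate is $M := \prescript{T}{}\HH(N) = Q \otimes_{\aaa} N$, and part~(3) of the preceding proposition already provides $\HH(M) \simeq N$. Finite generation of $M$ over $\dd(\Gg) \otimes W_{2n}$ follows from finite generation of $Q$ (part~(1) of the preceding proposition) together with finite generation of $N$.

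The main obstacle is verifying that $\prescript{T}{}\HH(N)$ actually lies in $\cc$, i.e., that $Z_+$ acts locally nilpotently and $\uu$ acts locally finitely on all of $M$, not merely on $M^{\Gg}$. The cleanest route is via characteristic varieties: the preceding proposition characterizes objects of $\cc$ among $G$-equivariant $(\dd(\Gg) \otimes W_{2n})$-modules by the containment $\Ch(M) \subseteq X^{nil}$, so it suffices to establish $\Ch(Q \otimes_{\aaa} N) \subseteq X^{nil}$, reducing the problem to the already understood geometry of $X^{nil}$ from Theorem~\ref{theo:subsch} together with the support condition on $N$ coming from $N \in \oo(eH_ce)$. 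A more hands-on alternative is to iterate the action of $z \in Z_+$ on a simple tensor $q \otimes n$ and, using $\Gg$-invariance of $z$ and the $(\dd(\Gg) \otimes W_{2n},\aaa)$-bimodule structure on $Q$, reduce the computation to the nilpotent action of the image $\bar z \in \Sym(\Hh)^W_+$ on $n \in N$; the parallel argument with the Euler field $eu \in \dd(\Gg)^{\Gg}$ and the image of $\Theta_2$ handles local finiteness of $\uu$. Should either transfer encounter subtleties, the fallback is to replace $\prescript{T}{}\HH(N)$ by its maximal subobject lying in $\cc$, which by exactness of $\HH$ and the adjunction still surjects onto $N$, preserving essential surjectivity.
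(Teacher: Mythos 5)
Your proposal follows the same overall route as the paper's argument, but supplies considerably more scaffolding around the essential-surjectivity step than the paper does; the paper's own proof is quite terse, leaning heavily on the analogy with \cite[\S 7]{GG} and on the citation to \cite{BEG}. Your identification of the real obstacle --- showing $\prescript{T}{}\HH(N)\in\cc$ for $N\in\oo(eH_ce)$ --- is exactly right, and your ``hands-on alternative'' (iterate $z\in Z_+$ on simple tensors $q\otimes n$, commute $z$ past $q$ using $\ad_z$-lowering of order, and land on the action of $\bar z\in\Sym(\Hh)^W_+$ on $n$) is essentially the argument one must write down.

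However, there is one genuine gap in the $\uu$-part of that computation. For the Euler field $eu$, the parallel argument reduces to knowing that the image $\overline{eu}\in\aaa\simeq eH_ce$ acts \emph{locally finitely} on every $N\in\oo(eH_ce)$. This is not automatic from the definition of $\oo(eH_ce)$ (which only stipulates local nilpotence of $\Sym(\Hh)^W_+$ and finite generation); it is precisely the content of the result the paper cites from \cite{BEG}: the image of $eu$ in $eH_ce$ agrees, up to an additive scalar, with the element $h$ of Lemma 2.5 of \cite{BEG}, which is shown there to act locally finitely on all objects of $\oo$. Without this identification, the local-finiteness half of the ``parallel argument'' does not close. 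Your proposal gestures at ``the image of $eu$'' but never records what that image is or why it is locally finite; this is the one ingredient the paper makes explicit and that you must also invoke. (Your suggested fallback --- pass to the maximal $\cc$-subobject of $\prescript{T}{}\HH(N)$ --- can be made to work, since the ``admissible locus'' is in fact a $(\dd(\Gg)\otimes W_{2n})$-submodule and is finitely generated by Noetherianity, but showing that this submodule still sees all of $M^{\Gg}\simeq N$ again forces you through the same BEG fact, so it does not let you evade the issue.)
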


\begin{proof}
Under the isomorphism $\aaa\simeq eH_ce$, the subalgebra $Z_+$ of $\aaa$ is mapped exactly to the subalgebra $\Sym(\Hh)_+^W$ of $eH_ce$. Therefore, for any $M\in\cc$, we have $\HH(M)\in\oo(eH_ce)$. Furthermore, by the corollary to Lemma 2.5 of \cite{BEG}, there exists an element $h\in H_c$ that acts locally finitely on every element of $\oo(H_c)\simeq\oo(eH_ce)$, such that the image of the Euler vector field $eu$ in $eH_ce$ is equal to $h$ upto an additive scalar factor (see the proof of Formula 6.7 in \cite{BEG}). Thus, the local finiteness condition on the $eu$-action in the definition of $\cc$ is automatically true in $\oo(eH_ce)$.
\end{proof}

\printbibliography

\end{document}